\newtheorem{defi}{Definition}
\newtheorem{rem}{Remark} 
\newtheorem{prop}{Proposition}
\newtheorem{theorem}{Theorem}
\newtheorem{remark}{Remark}
\newtheorem{coro}{Corollary}
\newtheorem{lemma}{Lemma}
\DeclareMathOperator{\Id}{Id}
\DeclareMathOperator{\re}{Re}
\DeclareMathOperator{\im}{Im}
\DeclareMathOperator{\tr}{tr}
\DeclareMathOperator{\spn}{span}
\DeclareMathOperator{\diag}{diag}
\DeclareMathOperator{\Hd}{\mathcal{H}}
\DeclareMathOperator{\Vd}{\mathcal{V}}
\title{Sub-Riemannian geometry of Stiefel manifolds}
\author[Christian Autenried, Irina Markina]{ Christian Autenried, Irina Markina}
\address{Department of Mathematics, University of Bergen, Norway.}
\email{christian.autenried@math.uib.no}
\address{Department of Mathematics, University of Bergen, Norway.}
\email{irina.markina@math.uib.no}
\thanks{The authors are partially supported by the NFR-FRINAT grants \#204726/V30 and \#213440/BG.}
\subjclass[2010]{53C17, 52C30, 53C22}
\keywords{Sub-Riemannian geometry, normal geodesic, cut locus, Stiefel manifolds, Grassmann manifold}
\begin{document}
\maketitle

\begin{abstract}
In the paper we consider the Stiefel manifold $V_{n,k}$ as a principal $U(k)$-bundle over the Grassmann manifold and study the cut locus from the unit element. We gave the complete description of this cut locus on $V_{n,1}$ and presented the sufficient condition on the general case. At the end, we study the complement to the cut locus of $V_{2k,k}$.
\end{abstract}

\section{Introduction}

A sub-Riemannian geometry is an abstract setting for study geometry with non-holonomic constraints. A sub-Riemannian manifold is a triplet $(Q,D,g_D)$, where $Q$ is a $C^{\infty}$-smooth manifold, $D$ is a smooth sub-bundle of the tangent bundle $TQ$ of the manifold $Q$ (or smooth distribution) and $g_D$ is a smoothly varying with respect to $q\in Q$ inner product $g_D(q)\colon D_q\times D_q\to \mathbb R$. The topic is actively developed last decades and as, now classical, sources we refer to~\cite{AgrSach,CDPT,LS,M,Str}.

One of the main objects of interest in sub-Riemannian geometry are normal and abnormal geodesics that are two different but not mutually disjoint families. The exponential map is not a local diffeomorphism anymore. Nevertheless, the singularities of the exponential map, as in the Riemannian geometry are closely related to the cut locus and failure of the optimality for geodesics. The cut locus in sub-Riemannian geometry is an object that is of big interest, but rather poorly studied. There exist very few results concerning the global and local structure of it and most of them restricted to low dimensional manifolds. The work~\cite{NaSa} studies the one dimensional Heisenberg group, and the results easily can be extended to higher dimensions. A full description of the global structure of the cut locus for the groups $SU(2)$, $SO(3)$, $SL(2)$, and lens spaces is given in~\cite{B}. For the groups $SO(3)$, $SL(2)$, and lens spaces the cut locus is a stratified set, whereas in $SU(2)$ it is a maximal circle $S^1$ without one point. The reader will find similar structures to those that obtained in the present work.  The global structure of the exponential map and the cut locus of the identity on the group $SE(2)$ is completely presented in~\cite{S}.
 
The nature of normal and abnormal geodesics and complexity of the cut locus structure in sub-Riemannian geometry on the example of the Martinet manifold is pointed out in the work~\cite{cut}.
The Martinet manifold is the smooth manifold $\mathbb{R}^3$ with smooth distribution spanned by vector fields 
$$ X=\frac{\partial}{\partial x} + \frac{1}{2}y^2\frac{\partial}{\partial t}, \qquad Y = \frac{\partial}{\partial y}$$
and an inner product, making $X,Y$ orthonormal. The cut locus in this case is the Martinet surface $y=0$ minus the abnormal geodesic $z=0$ inside of the surface~[Thm. 1.2, \cite{cut}]. The cut locus for contact manifolds were also studied in~\cite{AG}.

A progress in study of the cut locus of the identity on the sub-Lorentzian counterpart of one dimensional Heisenberg group can be found in~\cite{GR}.

In the present work we consider the Stiefel manifold $V_{n,k}$ as a principal $U(k)$-bundle with the Grassmann manifold as a base space. We completely describe the cut locus from the unit element for the case $V_{n,1}$. The technical difficulties and possible presents of abnormal geodesics did not allowed to extend this result to the general case $V_{n,k}$. Nevertheless, we present a partial description of the cut locus, that is to our knowledge almost unique example for manifolds of higher dimensions.

The structure of the work is the following. Section~\ref{sec:basic} collects the basic definitions that nowadays are standard in sub-Riemannian geometry, but sometimes fussy. In Section~\ref{VG} we define Grassmann and Stiefel manifolds embedded in $U(n)$, metric of constant bi-invariant type and normal geodesics based on the general theorem that can be found in~\cite{M}. In Section~\ref{sec:Vn1} we describe the cut locus for the equivalence class of the unit element on the principal $U(1)$-bundle structure on the Stiefel manifold $V_{n,1}$. Since the considered manifold is homogeneous it gives the structure of the cut locus for any point. Section~\ref{sec:general} is dedicate to the cut locus for the general case of the Stiefel manifold $V_{n,k}$ and $V_{2k,k}$. In Section~\ref{real} we briefly review some particular cases of the Stiefel manifold embedded in $SO(n)$.


\section{Basic definitions from sub-Riemannian geometry}\label{sec:basic}


We remind the necessary definitions and propositions based on~\cite{M}.
\begin{defi}
A sub-Riemannian manifold is a triplet $(Q,\Hd,\langle \cdot\, , \cdot \rangle)$, where $Q$ is a $C^{\infty}$-manifold, $\Hd$ is a vector subbundle of the tangent bundle $TQ$, and $\langle \cdot \,, \cdot \rangle$ is a fibre inner-product. The subbundle $\Hd$ is called horizontal and $\Hd_q$ is a horizontal space at a point $q\in Q$. The metric $\langle \cdot \,, \cdot \rangle_q\colon \Hd_q\times \Hd_q\to\mathbb R$, $q\in Q$ is called a sub-Riemannian metric, and the couple $(\Hd,\langle \cdot \,, \cdot \rangle)$ is a sub-Riemannian structure on $Q$.
\end{defi}

\begin{defi}
The horizontal subbundle $\Hd$ is called bracket generating if for every $q\in Q$ there exists $r(q) \in \mathbb{Z}^+$ s.t. 
\begin{equation*}
\Hd^{r(q)}=T_qQ, 
\end{equation*} 
where $\Hd^1:=\Hd$ and $\Hd^{r+1}:=[\Hd^r, \Hd]+\Hd^r$, $r \geq 1$.
\end{defi}

\begin{defi}
An absolutely continuous curve $\gamma \colon [0,T] \to Q$ is called horizontal if $\dot{\gamma}(t) \in \Hd_{\gamma(t)}$ almost everywhere.
\end{defi}

\begin{defi}\label{length}
We define the length $l:=l(\gamma)$ of an absolutely continuous horizontal curve $\gamma\colon[0,T]\to Q$ as in the Riemannian geometry:
$$l(\gamma):=\int_0^T{\Vert \dot{\gamma} \Vert } dt =\int_0^T\sqrt{\langle \dot{\gamma}(t) , \dot{\gamma}(t) \rangle }\,dt.$$
Introduce the function $d(q_0,q)$ for $q_0, q \in Q$ by 
$$d(q_0, q):=\inf_{\gamma}\{l(\gamma)\},$$
where the infimum is taken over all absolutely continuous horizontal curves that connect $q_0$ and $q$. If there is no horizontal curve
joining $q_0$ to $q$, then we declare $d(q_0,q)=\infty$.
\end{defi}

Recall the Chow-Rashevskii theorem~\cite{C, R} that gives a sufficient condition of the existence of horizontal curves.

\begin{theorem}
Let $Q$ be a connected manifold. If the horizontal subbundle $\Hd\subset TQ$ is bracket generating, then any two points in $Q$ can be joined by a horizontal curve.
\end{theorem}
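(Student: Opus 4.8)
The plan is to prove the Chow--Rashevskii theorem.The plan is to prove this classical Chow--Rashevskii statement by an open/closed argument on reachable sets. Fix $q_0\in Q$ and define the reachable set $\mathcal{A}(q_0)$ to be the set of all $q\in Q$ that can be joined to $q_0$ by an absolutely continuous horizontal curve. First I would observe that ``joinable by a horizontal curve'' is an equivalence relation: constant curves give reflexivity, a horizontal curve run backwards is again horizontal (time-reversal preserves $\dot\gamma\in\Hd$), and two horizontal curves can be concatenated. Hence $Q$ decomposes into the disjoint reachable sets $\mathcal{A}(q)$, and it suffices to prove that each such set is open; connectedness of $Q$ then forces a single class, i.e.\ $\mathcal{A}(q_0)=Q$, which is exactly the claim.

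The core is therefore local: near any $q\in\mathcal{A}(q_0)$ I would fix smooth horizontal vector fields $X_1,\dots,X_m$ spanning $\Hd$ on a neighborhood of $q$. Since each integral curve of $X_i$ is horizontal, every point obtained from $q$ by following a finite concatenation of such flows again lies in $\mathcal{A}(q_0)$. I would then study the endpoint maps
$$\Phi(t_1,\dots,t_N)=\exp(t_N X_{i_N})\circ\cdots\circ\exp(t_1 X_{i_1})(q),$$
where $\exp(tX_i)$ denotes the time-$t$ flow of $X_i$, whose images are contained in $\mathcal{A}(q_0)$. The goal is to exhibit, for a suitable choice of indices and length $N$, a map $\Phi$ that is a submersion onto a neighborhood of $q$, so that its image contains an open set.

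To produce directions transverse to $\Hd$ I would invoke the commutator-of-flows identity: for horizontal fields $X,Y$, in a local chart,
$$\exp(-sY)\exp(-sX)\exp(sY)\exp(sX)(q)=q+s^2\,[X,Y](q)+o(s^2),$$
so that the reparametrization $s=\sqrt{t}$ yields a curve whose velocity at $t=0$ is $[X,Y](q)$. Iterating this realizes motion along arbitrary iterated brackets of the $X_i$, and the bracket generating hypothesis $\Hd^{r(q)}=T_qQ$ guarantees that such iterated brackets span $T_qQ$. This is the step I expect to be the main obstacle: one must assemble a single composite endpoint map $\Phi$ whose differential at the origin is surjective onto $T_qQ$, while controlling the remainder terms, since the reparametrization $s=\sqrt t$ destroys smoothness. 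I would circumvent this either by a rank-maximization argument---taking a composite map of maximal attainable rank $r$, and showing that $r<\dim Q$ would force its image into an integral manifold of an involutive distribution that contains $\Hd$ yet is invariant under all the $X_i$, contradicting bracket generation---or by directly appealing to Sussmann's orbit theorem.

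Once local openness is in hand, the first step closes the argument: $\mathcal{A}(q_0)$ is a nonempty open set, its complement is a union of open equivalence classes and hence also open, so $\mathcal{A}(q_0)$ is open and closed. Since $Q$ is connected, $\mathcal{A}(q_0)=Q$, and therefore any two points of $Q$ are joined by a horizontal curve.
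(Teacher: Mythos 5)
The paper does not actually prove this statement: it is recalled as the classical Chow--Rashevskii theorem with a citation to \cite{C, R}, so there is no internal proof to compare yours against. Your outline is the standard textbook proof of that classical result, and its architecture is sound: joinability by horizontal curves is an equivalence relation, each equivalence class is shown to be open via endpoint maps built from flows of horizontal vector fields, and connectedness then forces a single class. You also correctly identify where the real work lies, namely converting the commutator-of-flows expansion into a genuine surjectivity statement despite the loss of smoothness under $s=\sqrt{t}$, and the two escape routes you name (a maximal-rank argument, or invoking Sussmann's orbit theorem) are both legitimate and standard. Two details would need care in a complete write-up. First, a maximal-rank endpoint map $\Phi$ gives an open set around the image point $\Phi(t^0)$, not around $q$ itself; to conclude that $q$ is interior to $\mathcal{A}(q_0)$ you should compose with the reversed flows $\exp(-t^0_1 X_{i_1})\circ\cdots\circ\exp(-t^0_N X_{i_N})$, which carry that open set diffeomorphically onto a neighborhood of $q$ while preserving membership in $\mathcal{A}(q_0)$, since the reversed flow lines are again horizontal. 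Second, in the rank-maximization alternative the correct formulation is not via an involutive distribution but via invariance: at a point where the rank $r$ is maximal, the image is locally an immersed $r$-dimensional submanifold to which every $X_i$ must be tangent (otherwise composing with one more flow would increase the rank), hence all iterated brackets of the $X_i$ are tangent to it, and the hypothesis $\Hd^{r(q)}=T_qQ$ forces $r=\dim Q$. With these repairs your argument is exactly the classical proof found, for instance, in the paper's reference \cite{M}.
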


It follows that if $\Hd$ is bracket generating on a connected manifold, then the function $d$ introduced in Definition~\ref{length} is finite and defines the distance between two points on the manifold, called Carnot-Carath\'eodory distance.

\begin{defi}
An absolutely continuous horizontal curve that realizes the distance between two points is called a minimizing geodesic.
\end{defi}

Let $Q$ be $n$-dimensional smooth manifold and $\Hd$ be a smooth horizontal subbundle such that $\dim\Hd_q=k\leq n$ for all $q\in Q$. Considering a neighborhood $U_q$ around $q \in Q$ such that the subbundle $\Hd$ is trivialized in $U_q$, one can find a local orthonormal basis $X_1, \dotso , X_k$ with respect to the sub-Riemannian metric $\langle \cdot\, , \cdot \rangle$. The associated sub-Riemannian metric Hamiltonian is given by 
\begin{eqnarray*}
H(p,\lambda)=\frac{1}{2}\sum_{m=1}^k{\lambda(X_m(p))^2},
\end{eqnarray*}
where $(p,\lambda) \in T^*U_q$. A {\it normal geodesic} is defined as the projection to $U_q\subset Q$ of the solution to the Hamiltonian system
\begin{equation*}
\dot{p}_i=\frac{\partial H}{\partial \lambda_i} \qquad
\dot{\lambda}_i = -\frac{ \partial H}{\partial p_i},
\end{equation*}
where $(p_i,\lambda_i)$ are the coordinates in $T^*U_q$. 
We note that the word ``normal'' appears due to the fact that in the sub-Riemannian geometry there is another type of geodesics, calling ``abnormal" arising from different type of Hamiltonian function. For a more detailed examination of abnormal geodesics we refer to~\cite{AS,AS1,BT,LS,M94}. The present work is mostly concerned with the normal geodesics, therefore we omit the detailed definition for abnormal ones.

Suppose two differentiable manifolds $Q$, $M$, and the submersion $\pi\colon Q \to M$ are given. The fibre through $q\in Q$ is the set $Q_m:=\pi^{-1}(m)$, 
$m=\pi(q)$, which is a submanifold according to the implicit function theorem. 
The differential $d_q\pi\colon T_qQ \to T_{\pi(q)}M$ of $\pi$ defines the vertical space $\mathcal{V}_q \subset T_qQ$ that is the tangent space to the fibre $Q_{\pi(q)}$ and it is written as $\mathcal{V}_q:=\ker(d_q\pi)=T_q(Q_m)$, where $\ker(d_q\pi)$ denotes the kernel of the linear map $d_q\pi$. It can be shown that $\Vd=\bigcup_{q\in Q}\Vd_q$ is a smooth subbundle of $TQ$ that is called vertical subbundle~\cite{M}.

\begin{defi}
An Ehresmann connection $($or connection$)$ for a submersion $\pi\colon Q \to M$ is a subbundle $\mathcal{H} \subset TQ$ that is everywhere transverse and
of complementary dimension to the vertical: $\mathcal{V}_q \oplus \mathcal{H}_q=T_qQ$. The space $\mathcal{H}_q$ is called horizontal
subspace of $T_qQ$.
\end{defi}

\begin{defi}
Let $\pi\colon Q \to M$ be a submersion with connection $\mathcal{H}$ and let $c\colon I \to M$ be a curve starting at $m \in M$. A curve $\gamma\colon I
\to Q$ is called a horizontal lift of the curve $c$ if $\gamma $ is tangent to $\mathcal{H}$ and projects to $c$, i.e. $\dot{\gamma}(t) \in \mathcal{H}_{\gamma (t)}$ and $\pi \circ \gamma(t) = c(t)$
for all $t \in I$.
\end{defi} 
There are different ways to introduce a sub-Riemannian structure on $Q$. In the sequel we describe two of them and indicate when they coincide. 

Assuming that $Q$ is a Riemannian manifold in the submersion $\pi\colon Q \to M$, we can use its Riemannian metric to define the orthogonal complement $\Hd_q$ of the vertical space $\Vd_q$ at each point $q \in Q$. Then $\mathcal{H}$ is a connection and the restriction of the Riemannian metric to $\mathcal{H}$ defines a sub-Riemannian metric on $Q$. 

Assume that the manifold $M$ is endowed with a Riemannian metric and the submersion $\pi \colon Q \to M$ has a connection $\mathcal{H}$.
Since $\mathcal{V}_q=\ker(d_q\pi)$ and $\im(d_q\pi\vert_{\mathcal{H}_q})=\im(d_q\pi)=T_{\pi(q)}M$, it follows that $d_q\pi\vert_{\Hd_q}$ is a linear
isomorphism from $\mathcal{H}_q$ to $T_{\pi(q)}M$. By pulling back the Riemannian metric on $M$ to $Q$,  we obtain a sub-Riemannian metric on $Q$ with underlying subbundle $\mathcal{H}$. This sub-Riemannian metric is said to be induced by the
connection $\mathcal{H}$ on $Q$ and the Riemannian metric on $M$. 

Suppose $Q$ and $M$ are smooth Riemannian manifolds and a submersion $\pi\colon Q \to M$ is given. Let  $\mathcal{H}_q$ be orthogonal
complement to the vertical $\mathcal{V}_q$ at every $q \in Q$. Two ways of inducing a sub-Riemanian metric on $Q$, by
restricting the Riemannian metric of $Q$ or by pulling back the Riemannian metric on $M$ using $d\pi$, coincide if $d_q\pi$ restricts to a linear isometry $\mathcal{H}_q \to T_{\pi(q)}M$ for all $q \in Q$.
\begin{defi}\label{isometry}
Let $Q$ and $M$ be Riemannian manifolds and let $\pi\colon Q \to M$ be a submersion. Let $\mathcal{V}_q \subset T_qQ$ denote the vertical
subspace at $q\in Q$ and $\Hd_q:=\mathcal{V}_q^{\perp}$ be its orthogonal complement. If $d\pi\colon TQ \to TM$ restricts to a linear isometry $\mathcal{H}_q
\to T_{\pi(q)}M$ for each $q \in Q$, then $\pi$ is called a Riemannian submersion.
\end{defi} 
Thus, Riemannian metrics on $Q$ and $M$ induce the same subriemannian structure on $Q$ if the submersion is Riemannian. 
\begin{defi}
A fibre bundle $\pi\colon Q \to M$ is a principal $G$-bundle if its fibre is a Lie group $G$ that acts freely and transitively on each fibre. 
\end{defi}
As a consequence we can identify $M$ with the quotient $Q/G$ of $Q$ by the group action of $G$. Furthermore, $\pi$ corresponds to the canonical projection to the
quotient.
\begin{defi}
A connection on $\pi\colon Q \to M$ is a principal $G$-bundle connection if the action of $G$ preserves the connection.
\end{defi}
We assume that the group acts on itself on the right $q \mapsto qg$, $q \in Q$, $g \in G$.

\begin{defi}
Let $Q \to M$ be a principal $G$-bundle with connection $\mathcal{H}$. A sub-Riemannian metric on $(Q,\mathcal{H})$ which is invariant under the action of $G$ is called a metric of bundle type.
\end{defi}
A sub-Riemannian metric which is induced from a $G$-invariant metric on $Q$ is an example of a metric of bundle type.

\begin{defi}
A bi-invariant Riemannian metric $\langle\cdot\,,\cdot\rangle$ on a differentiable manifold $Q$ with the Lie group $G$ acting on it is said to be of constant bi-invariant type if its inertia tensor
$\mathbb{I}_q\colon\mathfrak{g} \times \mathfrak{g} \to \mathbb{R}$ defined by $\mathbb{I}_q(\xi , \eta):=\langle \sigma_q \xi , \sigma_q\eta \rangle$ is
independent of $q \in Q$. Here
\begin{eqnarray*}
 \sigma_q\colon\mathfrak{g} &\to & T_qQ \\
\xi &\mapsto & \frac{d}{d \epsilon } \bigg| _{\epsilon=0} q \exp(\epsilon \xi).
\end{eqnarray*}
\end{defi}

\begin{defi}
Let $\pi \colon Q \to M$ be a principal $G$-bundle with a Riemannian metric of constant bi-invariant type and $\mathcal{H}$ be the induced connection. 
We define the $\mathfrak{g}$-valued connection one-form $A_q$ uniquely as the linear operator $A_q\colon T_qQ \to \mathfrak{g}$ which satisfies
following properties:
\begin{equation*}
\ker (A_q)=\mathcal{H}_q,\qquad A_q \circ \sigma_q =Id_{\mathfrak{g}},
\end{equation*}
where $Id_{\mathfrak{g}}$ is the identity map on $\mathfrak{g}$.
\end{defi}
The map $A\colon TQ \to \mathfrak{g}$ defines a $\mathfrak{g}$-valued connection tensor on $Q$.

\begin{theorem}~\cite{M}\label{Theorem}
Let $\pi\colon Q \to M$ be a principal $G$-bundle with a Riemannian metric of constant bi-invariant type. Let $\mathcal{H}$ be the induced connection, 
with $\mathfrak{g}$-valued connection tensor $A$. Let $\exp_R$ be the Riemannian exponential map, so that $\gamma_R(t)=\exp_R(tv)$ 
is the Riemannian geodesic through $q$ with initial velocity $v\in T_qQ$. Then any horizontal lift $\gamma$ of the projection $\pi\circ \gamma_R$ 
is a normal sub-Riemannian geodesic and is given by
$$\gamma(t)=\exp_R(tv)\exp_G(-tA(v)),$$
where $\exp_G\colon \mathfrak{g} \to G$ is the group $G$ exponential map. Moreover, all normal sub-Riemannian geodesics can be obtained in this way.
\end{theorem}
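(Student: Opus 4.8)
The plan is to exploit the Hamiltonian (Kaluza--Klein) structure behind the statement and to reduce the normal sub-Riemannian geodesic flow on $Q$ to the Riemannian geodesic flow corrected by a one-parameter subgroup motion in the fibre. Throughout I identify $T^*Q$ with $TQ$ by the metric, so that the initial covector corresponds to the initial velocity $v\in T_qQ$, and I write $J\colon T^*Q\to\mathfrak g^*$ for the momentum map of the cotangent lift of the right $G$-action, $J(q,p)(\xi)=p(\sigma_q\xi)$.

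First I would split the Riemannian Hamiltonian. Decomposing $p$ into its $\Hd$- and $\Vd$-parts by means of the metric, one writes $H_R(q,p)=\tfrac12|p|^2=H_{sR}+H_{\mathrm{vert}}$, where $H_{sR}$ is precisely the sub-Riemannian Hamiltonian of Section~\ref{sec:basic} and $H_{\mathrm{vert}}$ is the vertical part. The decisive step --- and the only place where the hypothesis of \emph{constant bi-invariant type} is used --- is to show that $H_{\mathrm{vert}}=\tfrac12\,\mathbb I^{-1}(J,J)$ is a function of $J$ alone: the vertical vector metrically dual to $p$ is $\sigma_q(\eta)$ with $\eta=A(v)$, and since $J(q,p)(\xi)=\langle\sigma_q\eta,\sigma_q\xi\rangle=\mathbb I(\eta,\xi)$ while $\mathbb I$ does not depend on $q$, the norm $|p_{\Vd}|^2=\mathbb I(\eta,\eta)=\mathbb I^{-1}(J,J)$ is expressed through $J$ with $q$-independent coefficients. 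On the level set $\{J=\mu\}$ this makes $\eta=\mathbb I^{-1}\mu$ constant.

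Next I would invoke Noether's theorem. As the metric is $G$-invariant, $H_R$ Poisson-commutes with every component of $J$, so $J$ is a first integral of the Riemannian flow and the Hamiltonian flows of $H_R$ and of the $J_\xi$ commute. Hence $\vec{H}_{sR}=\vec{H}_R-\vec{H}_{\mathrm{vert}}$, and on $\{J=\mu\}$ the field $\vec{H}_{\mathrm{vert}}$ equals $\vec{J}_\eta$ with $\eta=\mathbb I^{-1}\mu$ constant, whose flow is the cotangent lift of $q\mapsto q\exp_G(t\eta)$. Because the two flows commute, the flow of $\vec{H}_{sR}$ factors as the Riemannian geodesic flow followed by this fibre motion. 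Computing $\eta$ from the initial velocity via $J(q,v)(\xi)=\langle v,\sigma_q\xi\rangle=\mathbb I(A(v),\xi)$, so that $\eta=A(v)$, and projecting to $Q$ yields exactly
$$\gamma(t)=\exp_R(tv)\exp_G\bigl(-tA(v)\bigr).$$
Since $\gamma$ is the projection of an integral curve of $\vec{H}_{sR}$ it is a normal geodesic, it is automatically horizontal, and because $\pi\circ\gamma=\pi\circ\gamma_R$ with $\gamma(0)=q$ it is the horizontal lift of $\pi\circ\gamma_R$ through $q$; the other horizontal lifts are the $G$-translates $\gamma(\cdot)g_0$, again normal geodesics by $G$-invariance. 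The converse --- that every normal geodesic arises this way --- is then immediate, as each is the projection of some $\vec{H}_{sR}$-trajectory and these are parametrised by the initial data $(q,v)$.

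As a cross-check, and as an alternative avoiding symplectic language, I would verify the formula by hand. Conservation of the vertical momentum along $\gamma_R$ gives $A(\dot\gamma_R(t))\equiv A(v)$; differentiating $\gamma(t)=\gamma_R(t)\exp_G(-tA(v))$ and using the equivariance $A_{qg}\circ dR_g=\mathrm{Ad}_{g^{-1}}\circ A_q$ of the principal connection together with $\mathrm{ad}_{A(v)}A(v)=0$ then shows $A_{\gamma(t)}(\dot\gamma(t))=0$, so $\gamma$ is horizontal and projects to $c:=\pi\circ\gamma_R$; that $c$ is a geodesic of $M$ follows because the conserved vertical momentum makes $\gamma_R$ meet the fibres at a constant angle, whence O'Neill's submersion identities give $\nabla^M_{\dot c}\dot c=0$, and the horizontal lift of a base geodesic is a normal geodesic. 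I expect the \textbf{main obstacle} to be the constant-bi-invariant-type reduction: it is exactly the $q$-independence of $\mathbb I$ that forces the fibre correction to be the single one-parameter subgroup $t\mapsto\exp_G(-tA(v))$ rather than a genuinely $t$-dependent curve in $G$, and securing this identification together with the commutation $\{H_R,J\}=0$ is the crux on which the closed form rests.
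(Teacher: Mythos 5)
The paper itself contains no proof of this theorem: it is quoted from Montgomery's book \cite{M}, and your main argument is essentially the proof given there, carried out correctly. The splitting $H_R=H_{sR}+\tfrac12\,\mathbb I^{-1}(J,J)$, the observation that constancy of $\mathbb I$ makes the vertical term a collective function of the momentum map, Noether's theorem giving $\{H_R,J_\xi\}=0$, and the resulting factorization of the $H_{sR}$-flow as the Riemannian flow composed with the fibre rotation generated by the constant element $\eta=\mathbb I^{-1}\mu=A(v)$ is exactly Montgomery's reduction, and it correctly yields both the formula and the converse, since every normal geodesic is the projection of an $H_{sR}$-trajectory with some initial covector, to which the factorization applies.

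One genuine error, however, sits in your final ``cross-check'' paragraph: the claim that $c=\pi\circ\gamma_R$ is a geodesic of $M$ is false, and no appeal to O'Neill's identities will rescue it. The projection of a non-horizontal Riemannian geodesic under a Riemannian submersion is in general not a base geodesic: in the Hopf fibration $S^1\to S^3\to S^2$ (which is precisely the case $V_{2,1}\to G_{2,1}$ of this paper), a great circle with nonzero vertical momentum projects to a small circle of $S^2$. What is true is that $c$ solves Wong's equation, i.e.\ the geodesic equation corrected by a Lorentz-force term contracted with the conserved vertical momentum; this is exactly why its horizontal lift is a normal sub-Riemannian geodesic with nonzero vertical covector, not the lift of a base geodesic. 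The first half of that paragraph --- conservation of $A(\dot\gamma_R)$ and the computation showing $A_{\gamma(t)}(\dot\gamma(t))=0$ via the equivariance $A_{qg}\circ dR_g=\mathrm{Ad}_{g^{-1}}\circ A_q$ and $\mathrm{ad}_{A(v)}A(v)=0$ --- is correct and verifies horizontality of the stated curve; but as an alternative proof the paragraph fails, both because of the false geodesic claim and because it could not deliver the ``moreover'' converse in any case. Rely on your main argument.
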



\section{Stiefel and Grassmann manifolds embedded in $U(n)$}\label{VG}


We use the following notations in the present section. Let $\mathbb{C}^n$ denote a $n$-dimensional complex vector space and $\mathbb{C}^{m \times n}$ the set of $(m \times n)$-matrices with complex entries. 
We want to apply Theorem~\ref{Theorem} for the submersion 
$\pi \colon V_{n,k}(\mathbb{C}^n) \to G_{n,k}(\mathbb{C}^n)$, where $V_{n,k}(\mathbb{C}^n)=V_{n.k}$ is the Stiefel manifold 
and $G_{n,k}(\mathbb{C}^n)=G_{n,k}$ is the Grassmann manifold for $n \in \mathbb{N}$ and $k \in \{1, \dotso ,n\}$. 

We start from the description of a general construction. Given a group $G$ with an invariant inner product on its Lie algebra $\mathfrak g$ and two subgroups $H,K \subset G$, we form the quotient spaces $G/H$ and $G/(H \times K)$. The submersion $G/H \rightarrow G/(H \times K)$ is a principal $K$-bundle, with Riemannian metrics on $G/H$ and $G/(H \times K)$ induced from the bi-invariant Riemannian metric on $G$ generated by an invariant inner product. The Riemannian metrics are induced by the projections $G\to G/H$ and $G\to G/(H \times K)$. Both manifolds in the submersion $G/H \rightarrow G/(H \times K)$ are homogeneous manifolds, where the group $G$ acts transitively. The induced Riemannian metric on $G/H$ is also bi-invariant under the action of the group $G$. The geodesics on $G/H$ are the projections from $G$ of one-parameter subgroups $\exp(t \xi )$ with $\xi$ orthogonal to the Lie algebra $\mathfrak h \subset \mathfrak g$ of $H$. We set $G=U(n)$, $H=U_n(n-k)$, $K=U_n(k)$, where 
$$U_n(k):=\left\{\begin{pmatrix} U_k & 0 \\ 0 & I_{n-k} \end{pmatrix} \Big\vert\ U_k \in U(k) \right\} \subset U(n)\quad\text{ and}
$$ 
$$U_n(n-k):=\left\{\begin{pmatrix} I_k & 0 \\ 0 & U_{n-k} \end{pmatrix} \Big\vert U_{n-k} \in U(n-k) \right\} \subset U(n).$$ 
Note that we use the notations $U_n(k)$ and $U_n(n-k)$ with the lower subscript in the current section to emphasise that the elements of these groups are written as $(n\times n)$-matrices. Then the quotient $G/H=U(n)/U_n(n-k)$ is isomorphic to the Stiefel manifold $V_{n,k}$ and $G/(H \times K)=U(n)/(U_n(n-k) \times U_n(k))$ is isomorphic to the Grassmann manifold $G_{n,k}$.


\subsection{Unitary group and bi-invariant metric}


Before we give detailed definition of Stiefel and Grassmann manifolds we remind that the unitary group $U(n)$ is a matrix Lie group, whose elements $X$ satisfy the condition
$$U(n)=\{X\in\mathbb{C}^{n\times n} \vert\ \  \bar{X}^TX=X\bar{X}^T=I_n\}.$$
Here $I_n$ is the unite $(n\times n)$-matrix and $\bar{X}^T$ is the complex conjugate and transposed of the matrix $X$. The Lie algebra $\mathfrak{u}(n)$ consists  of all skew-Hermitian matrices: 
$$\mathfrak{u}(n)=\{\mathcal X
\in \mathbb{C}^{n \times n} \vert\ \ \mathcal X=-\bar{\mathcal X}^{T}\}.
$$ 
We remind that a matrix $X\in U(n)$ is of full rank, its columns and rows are orthonormal with respect to the standard Hermitian product in $\mathbb{C}^n$ and that the main diagonal of the skew-Hermitian matrices are purely imaginary. Moreover, the Hermitian product in $\mathbb C^n$ is invariant under the action of $U(n)$, that particularly means that the orthogonality is preserved under this action. The Lie algebra $\mathfrak u(n)$ can be endowed with the inner product $(\mathcal X ,\mathcal Y)_{\mathfrak u(n)}-2n\tr(\mathcal X\mathcal Y)$, $\mathcal X,\mathcal Y\in \mathfrak u(n)$. 
Considering $U(n)$ as a smooth manifold, we denote its points by~$q$ and the metric at this point by $\langle\cdot\,,\cdot\rangle_{U(n)}(q)$ or, if it is clear from the context, simply by $g_q$. Then a left-invariant metric on $U(n)$ with respect to the group action of $U(n)$ on its Lie algebra is given by
$$
\begin{array}{lllll}
\langle\cdot\,,\cdot\rangle_{U(n)}(q) \colon & T_qU(n) \times T_qU(n)\cong & q\mathfrak{u}(n) \times q\mathfrak{u}(n) &\to &\mathbb{R}
\\
& & ( q\mathcal X\ ,\ q\mathcal Y )&\mapsto & -2n\tr(\mathcal X\mathcal Y)
\end{array}
$$ 
$q\in U(n)$, $\mathcal X,\mathcal Y \in \mathfrak{u}(n)$. 
This metric is actually bi-invariant, that follows from the observation that can be found, for instance, in~\cite{G} and~\cite{Mi}. It is stated as follows: Let $\mathfrak g$ be a Lie algebra of a Lie group $G$ endowed with an inner product~$( \cdot \,,\cdot )_{\mathfrak g}$. An inner product $( \cdot \,,\cdot )_{\mathfrak g}$ is called invariant if it is invariant under the adjoint action of $G$, i.e. 
$(q^{-1} \eta q , q^{-1} \xi q )_{\mathfrak g} = (\eta , \xi )_{\mathfrak g}$
for all $\eta , \xi \in \mathfrak g$ and $q\in G$. Then it is well known, see for instance~\cite{Knapp}, that an invariant inner product $( \cdot \,,\cdot )_{\mathfrak g}$ on a Lie algebra $\mathfrak g$ determines a bi-invariant metric on the group $G$ via 
$$ \langle \eta , \xi \rangle_{G}(q) := (q^{-1} \eta, q^{-1} \xi )_{\mathfrak g}=(\eta q^{-1} , \xi q^{-1} )_{\mathfrak g}$$
for all $\eta , \xi \in T_qG$. 

We only need to check that 
the inner product $(\mathcal X ,\mathcal Y)_{\mathfrak u(n)}=-2n\tr (\mathcal X \mathcal Y)$ on $\mathfrak u (n)$ is invariant. Indeed, 
\begin{eqnarray*}
(q^{-1}\mathcal X q , q^{-1} \mathcal Y q )_{\mathfrak u(n)} &=& -2n \tr (q^{-1} \mathcal X q q^{-1} \mathcal Y q ) = -2n\tr (q^{-1} \mathcal X \mathcal Y q ) \\
&=& -2n\tr (\mathcal Y qq^{-1} \mathcal X ) =-2n\tr(\mathcal X \mathcal Y )= (\mathcal X , \mathcal Y )_{\mathfrak u(n)}
\end{eqnarray*}
for all $\mathcal X , \mathcal Y \in \mathfrak u (n)$ and $q \in U(n)$. 

\begin{rem}
The left and right action of any subgroup $U_n(k)$, $1\leq k\leq n$ on the group $U(n)$ and on the Lie algebra $\mathfrak u(n)$ are defined as a matrix multiplication from the left or from the right. The inner product $( \cdot\,,\cdot )_{\mathfrak g}=-2n\tr(\cdot\,,\cdot)$ on the Lie algebra $\mathfrak u(n)$ is invariant under the adjoint action of $U_n(k)$ and therefore the metric $\langle\cdot\,,\cdot\rangle_{U(n)}$, defined by left or right translations by the action of $U_n(k)$, is bi-invariant under this action.
\end{rem}


\subsection{Stiefel manifold and metric of constant bi-invariant type}


The Stiefel manifold $V_{n,k}$ is the set of all $k$-tuples $(q_1, \dotso , q_k)$ of vectors $q_i \in \mathbb{C}^n$, $ i \in \{1, \dotso , k\}$, which are orthonormal with respect to the standard Hermitian metric. This is a compact manifold which can be equivalently defined as 
$$
V_{n,k}:=\{X\in\mathbb{C}^{n\times k} \vert \ \ \bar{X}^TX=I_{k}\}.
$$
The condition $\bar{X}^TX=I_{k}$ is equivalent to the orthonormality of columns. This $k$ orthonormal columns can be considered as arbitrary $k$ columns in a matrix $X\in U(n)$. This allows us to realize the Stiefel manifold as a quotient set of $U(n)$ by the group $U_n(n-k)$. To do this we introduce the equivalence relation $\backsim_1$ on $U(n)$ by
\begin{eqnarray*}
q \backsim_1 p\quad \Longleftrightarrow\quad q=p \begin{pmatrix} 
I_k & 0 
\\ 0 & U_{n-k}
\end{pmatrix}, \qquad q,p \in U(n),\quad U_{n-k} \in U(n-k). 
\end{eqnarray*}
This results to the
equivalence class for $q\in U(n)$ 
\begin{eqnarray*} 
[q]^{\backsim_1}=\left\{q 
\begin{pmatrix} 
I_k & 0 \\ 0 & U_{n-k} 
\end{pmatrix},
\Big\vert U_{n-k} \in U(n-k) \right\} \in U(n)/U_n(n-k),\ \ q\in U(n).
\end{eqnarray*}
The quotient $U(n)/U_n(n-k)$ is a smooth manifold with the quotient topology and we denote the natural projection from $U(n)$ to the quotient $U(n)/U_n(n-k)$ by $\pi_1$. We identify the equivalence class $[q]^{\backsim_1}$ with a point in the Stiefel manifold and write $[q]_{V_{n,k}}\in V_{n,k}$ instead of $[q]^{\backsim_1}$ to emphasize that point belongs to the Stiefel manifold.
So, practically, an element of $V_{n,k}$ is thought of an element in $U(n)$ whose first $k$ columns from the left are of interest and the last $n-k$ columns are not. The real dimension of $V_{n,k}$ is $2nk-k^2$. 

The tangent space to the Stiefel manifold is a quotient of the tangent space to $U(n)$ by tangent space of the equivalence classes. To obtain it we differentiate curves $\gamma (t) \in [q]^{\backsim_1}$ at $t=0$ for a fixed $q \in U(n)$ and get  the space $\mathcal R=\Big\{q \begin{pmatrix}
0 & 0 \\ 0 & \mathcal C \end{pmatrix} \vert \ \ \mathcal C \in \mathfrak{u}(n-k)\Big\}$. 
Intuitively, movements in the direction $\mathcal R$ make no change in the quotient space. It follows that the tangent space $T_{[q]_{V_{n,k}}}V_{n,k}$ to the Stiefel manifold at $[q]_{V_{n,k}}\in V_{n,k}$ is given by the quotient of the tangent space $T_qU(n)$, that is isomorphic to $q\mathfrak{u}(n)$, by $\mathcal R$:
$$T_{[q]_{V_{n,k}}}V_{n,k}=\left\{ [q]_{V_{n,k}} \begin{pmatrix} \mathcal X_1 & -\bar{\mathcal X_2}^T \\ \mathcal X_2 & 0 \end{pmatrix} 
\Big\vert \ \ \mathcal X_1 \in \mathfrak{u}(k), \mathcal X_2 \in \mathbb{C}^{(n-k) \times k} \right\} . $$
Similar results can be found in \cite{A} or \cite{Ma}.

Now we define a metric $\langle \cdot\, , \cdot \rangle_{V_{n,k}}$ on $V_{n,k}$ by
\begin{eqnarray*}
& \left\langle  [q]_{V_{n,k}} 
\begin{pmatrix} \mathcal X_1 & -\bar{\mathcal X_2}^T \\ \mathcal X_2 & 0 
\end{pmatrix} , 
 [q]_{V_{n,k}} 
 \begin{pmatrix} \mathcal Y_1 & -\bar{\mathcal Y_2}^T \\ \mathcal Y_2 & 0 
 \end{pmatrix} 
 \right\rangle_{V_{n,k}} \Big( [q]_{V_{n,k}} \Big)
 \\
 &:=
 \left\langle q 
 \begin{pmatrix} \mathcal X_1 & -\bar{\mathcal X_2}^T \\ \mathcal X_2 & 0 
 \end{pmatrix} , 
 q \begin{pmatrix} \mathcal Y_1 & -\bar{\mathcal Y_2}^T \\ \mathcal Y_2 & 0 
 \end{pmatrix} 
 \right\rangle_{U(n)} \big(q\big)
 = \left( 
\begin{pmatrix} \mathcal X_1 & -\bar{\mathcal X_2}^T \\ \mathcal X_2 & 0 
\end{pmatrix} ,  
\begin{pmatrix} \mathcal Y_1 & -\bar{\mathcal Y_2}^T \\ \mathcal Y_2 & 0 
\end{pmatrix} 
\right)_{\mathfrak u(n)},
\end{eqnarray*} 
where $q \in [q]_{V_{n,k}}$ is any representative of the equivalence class $[q]_{V_{n,k}}$. It is clear from this definition that the metric $\langle \cdot\, , \cdot \rangle_{V_{n,k}}$ is independent of the choice of the representation.

Since $U_k[q]_{V_{n,k}} =[U_kq]_{V_{n,k}} $ and $[q]_{V_{n,k}} U_k=[qU_k]_{V_{n,k}} $, $U_k\in U_n(k)$, it follows directly from the definition of the metric on $T_{[q]_{V_{n,k}}}V_{n,k}$ and the bi-invariance of the metric $\langle\cdot\,,\cdot\rangle_{U(n)}$ with respect to $U_n(k)$ that 
\begin{eqnarray*}
&&
\left\langle  
[U_k q]_{V_{n,k}} 
\begin{pmatrix} \mathcal X_1 & -\bar{\mathcal X_2}^T \\ \mathcal X_2 & 0 
\end{pmatrix} , [U_k q]_{V_{n,k}} 
\begin{pmatrix} \mathcal Y_1 & -\bar{\mathcal Y_2}^T \\ \mathcal Y_2 & 0 
\end{pmatrix} 
\right\rangle_{V_{n,k}}  
\\ 
&=&
\left( 
\begin{pmatrix} \mathcal X_1 & -\bar{\mathcal X_2}^T \\ \mathcal X_2 & 0 
\end{pmatrix} ,  
\begin{pmatrix} \mathcal Y_1 & -\bar{\mathcal Y_2}^T \\ \mathcal Y_2 & 0 
\end{pmatrix} 
\right)_{\mathfrak u(n)} 
\\
&=& 
\left\langle   
[q]_{V_{n,k}} \begin{pmatrix} \mathcal X_1 & -\bar{\mathcal X_2}^T \\ \mathcal X_2 & 0 
\end{pmatrix} , [q]_{V_{n,k}} 
\begin{pmatrix} \mathcal Y_1 & -\bar{\mathcal Y_2}^T \\ \mathcal Y_2 & 0 
\end{pmatrix} 
\right\rangle_{V_{n,k}}  
\end{eqnarray*}
and
\begin{eqnarray*}
&&
\left\langle 
[qU_k]_{V_{n,k}} 
\begin{pmatrix} \mathcal X_1 & -\bar{\mathcal X_2}^T \\ \mathcal X_2 & 0 
\end{pmatrix}, [qU_k]_{V_{n,k}} 
\begin{pmatrix} \mathcal Y_1 & -\bar{\mathcal Y_2}^T \\ \mathcal Y_2 & 0 
\end{pmatrix}
\right\rangle_{V_{n,k}}  
\\ 
&=&
\left( 
\begin{pmatrix} \mathcal X_1 & -\bar{\mathcal X_2}^T \\ \mathcal X_2 & 0 
\end{pmatrix} ,  
\begin{pmatrix} \mathcal Y_1 & -\bar{\mathcal Y_2}^T \\ \mathcal Y_2 & 0 
\end{pmatrix} 
\right)_{\mathfrak u(n)} 
\\
&=&
\left\langle 
[q]_{V_{n,k}} \begin{pmatrix} \mathcal X_1 & -\bar{\mathcal X_2}^T \\ \mathcal X_2 & 0 
\end{pmatrix} ,  [q]_{V_{n,k}} 
\begin{pmatrix} \mathcal Y_1 & -\bar{\mathcal Y_2}^T \\ \mathcal Y_2 & 0 
\end{pmatrix} 
\right\rangle_{V_{n,k}} ,
\end{eqnarray*}
where $U_k$ is any element in $U_n(k) \subset U(n)$. So the metric of $\langle \cdot\, , \cdot \rangle_{V_{n,k}}$ is invariant under the action of $U_n(k)$.

Now we show that the metric $\langle\cdot\,,\cdot\rangle_{V_{n,k}}$ on $V_{n,k}$ is of constant bi-invariant type with respect to the right group action of $U_n(k)$. 
To prove it we recall that the infinitesimal generator $\sigma_{[q]_{V_{n,k}}}\colon\mathfrak{u}_n(k) \to T_{[q]_{V_{n,k}}}V_{n,k}$ is given by $\sigma_{[q]_{V_{n,k}}}(\xi)=[q]_{V_{n,k}}\xi$, where $\mathfrak{u}_n(k)$ is the Lie algebra of $U_n(k)$. It follows that 
$$
\mathbb{I}_{[q]_{V_{n,k}}}(\xi , \eta)=\langle [q]_{V_{n,k}} \xi , [q]_{V_{n,k}}\eta \rangle_{V_{n,k}}=-2n \tr(\xi
\eta),\quad\text{ where}\quad [q]_{V_{n,k}} \in V_{n,k}.
$$ 
This implies that $\mathbb{I}_{[q]_{V_{n,k}}}(\xi , \eta)$ is independent of $[q]_{V_{n,k}}$. 


\subsection{Grassmann manifold}


The Grassmann manifold $G_{n,k}$ is defined as a collection of all $k$-dimensional subspaces $\Lambda$ of $\mathbb{C}^n$. Equivalently, an element $\Lambda$ of $G_{n,k}$ can be written as a $(n \times k)$ matrix with columns $e_1, \dotso , e_k$, such that $\spn (e_1, \dotso , e_k)=\Lambda$. We are interested in the representation of $G_{n,k}$ as a quotient of $U(n)$ by some subgroup. As in the previous case of the Stiefel manifold, we quotient $U(n)$ by $U_n(n-k)$, but moreover, since the definition of $G_{n,k}$ does not depend on the choice of the orthonormal basis $e_1, \dotso , e_k$ for $\Lambda$, but only on its span, we also quotient $U(n)$ by the group $U_n(k)$ that leaves $\spn\{e_1, \dotso , e_k\}$ invariant. Therefore, we
define the equivalence relation $\backsim_2$ in $U(n)$ by 
\begin{eqnarray*}
m_1\backsim_2 m_2 \quad \Longleftrightarrow\quad m_1=m_2 \begin{pmatrix} U_k & 0 \\ 0 & U_{n-k} 
\end{pmatrix},\qquad m_1 , m_2 \in U(n),
\end{eqnarray*}
where $U_k \in U(k)$, $U_{n-k} \in U(n-k)$. This leads to the equivalence class 
\begin{eqnarray*}
[m]^{\backsim_2}=\left\{m 
\begin{pmatrix} 
U_k & 0 
\\ 
0 & U_{n-k} 
\end{pmatrix} 
\Big\vert\ \  U_k \in U(k), U_{n-k} \in U(n-k) \right\} \subset U(n),\quad m \in U(n), 
\end{eqnarray*}
which is isomorphic to $U(k) \times U(n-k)\cong U_n(k) \times U_n(n-k) $. We identify $G_{n,k}$ with the quotient space $U(n)/(U_n(k)\times U_n(n-k))$ and use the notation $[m]_{G_{n,k}}$ for $[m]^{\backsim_2}$ in the present Section~\ref{VG}. 

Furthermore, we obtain that the tangent space to the equivalence class $[m]^{\backsim_2}$ is 
$$\left\{m \begin{pmatrix} \mathcal X_1 & 0 \\ 0 & \mathcal X_4 \end{pmatrix} \Big\vert\ \  \mathcal X_1 \in
\mathfrak{u}(k), \ \mathcal X_4 \in \mathfrak{u}(n-k) \right\},\quad m\in U(n),
$$ 
and it implies that the tangent space of $G_{n,k}$ at the point $[m]_{G_{n,k}}$ is given by
$$T_{[m]_{G_{n,k}}}G_{n,k}=\left\{[m]_{G_{n,k}} \begin{pmatrix} 0 & \mathcal X_2 \\ -\bar{\mathcal X_2}^T & 0 \end{pmatrix} \Big\vert \ \ \mathcal X_2 \in \mathbb{C}^{k \times (n-k)} \right\} .$$ 
It has real dimension $2k(n-k)$ that gives the real dimension of $G_{n,k}$, see also~\cite{A,Ma}.

We define a metric $\langle \cdot\, , \cdot\rangle_{G_{n,k}}$ on $G_{n,k}$ by 
\begin{eqnarray*} 
& \left\langle
 [m]_{G_{n,k}} 
\begin{pmatrix} 0 & \mathcal X_2 \\ -\bar{\mathcal X_2}^T & 0 
\end{pmatrix} , [m]_{G_{n,k}} 
\begin{pmatrix} 0 & \mathcal Y_2 \\ -\bar{\mathcal Y_2}^T & 0 
\end{pmatrix} 
\right\rangle_{G_{n,k}} \Big([m]_{G_{n,k}} \Big)
\\
&:=  
\left\langle 
m 
\begin{pmatrix} 0 & \mathcal X_2 \\ -\bar{\mathcal X_2}^T & 0 
\end{pmatrix} , m 
\begin{pmatrix} 0 & \mathcal Y_2 \\ -\bar{\mathcal Y_2}^T & 0 
\end{pmatrix} 
\right\rangle_{U(n)}\big(m\big) 
\\
&= 
\left( 
\begin{pmatrix} 0 & \mathcal X_2 \\ -\bar{\mathcal X_2}^T & 0 
\end{pmatrix} , 
\begin{pmatrix} 0 & \mathcal Y_2 \\ -\bar{\mathcal Y_2}^T & 0 
\end{pmatrix} 
\right)_{\mathfrak u(n)}, 
\end{eqnarray*}
where $m \in U(n)$ is any representative of $[m]_{G_{n,k}}$. 


\subsection{Submersion $\pi\colon V_{n,k} \to G_{n,k}
$ and sub-Riemannian geodesics.}\label{sec:submersion}


Starting from now, we will consider the matrices $q$ and $m$ as  elements in $U(n)$. Now we can define the submersion 
\begin{eqnarray*}
\pi\colon V_{n,k} &\to& G_{n,k} , \\ {[q]}_{V_{n,k}} &\mapsto& {[m]}_{G_{n,k}}.
\end{eqnarray*} 
The projection $\pi$ sends the equivalence class $[q]^{\backsim_1}$ to the equivalence class $[m]^{\backsim_2}$,
where $m \in U(n)$ can be any matrix from the set 
$$\left\{ q \begin{pmatrix} U_k & 0 \\ 0 & U_{n-k} \end{pmatrix} \Big\vert\ \ U_k \in U(k), U_{n-k} \in U(n-k) \right\}.
$$ 
Note that the latter set consists of all unitary matrices whose first $k$ columns from the left span the same space as the first left $k$ columns of $q$. This implies that a fibre over a point $[m]_{G_{n,k}}\in G_{n,k}$ is given by 
\begin{align*}
\pi^{-1}([m]_{G_{n,k}})
&=\left\{\Big[m \begin{pmatrix} U_k & 0 \\ 0 & I_{n-k} \end{pmatrix} \Big]_{V_{n,k}} \Big\vert \ \ U_k \in U(k) \right\}
\\
&=\left\{[m]_{V_{n,k}}\begin{pmatrix} U_k & 0 \\ 0 & I_{n-k} \end{pmatrix}\Big\vert \ \ U_k \in U(k) \right\},\quad m\in U(n),
\end{align*}
which is homeomorphic to $U_n(k)\cong U(k)$.

The submersion $\pi$ is also a principal $U_n(k)$-bundle, where the right group action is defined by the multiplication from the right by an element from $U_n(k)$. It remains to show that the right action of $U_n(k)$ is continuous, preserves the fibre,
acts freely and transitively on the fibre.  

The multiplication of  $[q]_{V_{n,k}}\in V_{n,k}$ from the right by an element $U_k^0\in U(k)$ is given by
\begin{eqnarray*}
q \begin{pmatrix} I_k & 0 \\ 0 & U_{n-k} \end{pmatrix} \begin{pmatrix} U_k^0 & 0 \\ 0 & I_{n-k} \end{pmatrix} = q \begin{pmatrix} U_k^0 & 0 \\ 0 & U_{n-k}
\end{pmatrix},\quad q\in U(n),
\end{eqnarray*}
where $U_{n-k}$ is an arbitrary element of $U(n-k)$ and $U_k^0$ is a fixed element of $U(k)$. It follows that the right multiplication is well defined and
continuous. It can also be seen, that it preserves the fibre of $\pi^{-1}(\pi([q]_{V_{n,k}}))$. 
By the definition of the fibre it is clear that $[q]_{V_{n,k}}U(k)=\pi^{-1}(\pi([q]_{V_{n,k}}))$ and this implies the transitivity of the $U_n(k)$ action.

To show that $U_n(k)$ acts freely, we assume that $\tilde{U_1}= \begin{pmatrix} U_1 & 0 \\ 0 & I_{n-k} \end{pmatrix}\in U_n(k)$, $\tilde{U_2}=\begin{pmatrix} U_2 & 0 \\ 0 & I_{n-k} \end{pmatrix} \in U_n(k)$
and $[q]_{V_{n,k}}\tilde{U_1}=[q]_{V_{n,k}}\tilde{U_2}$ with $[q]_{V_{n,k}}=\begin{pmatrix} q_1 & q_2 \\ q_3 & q_4 \end{pmatrix}$, $q_1 \in \mathbb{C}^{k \times k}$, $q_2 \in \mathbb{C}^{k \times (n-k)}$, $q_3 \in \mathbb{C}^{(n-k) \times k}$ and $q_4 \in \mathbb{C}^{(n-k) \times (n-k)}$. Then we get the equations
\begin{eqnarray*}
q_1U_1=q_1U_2 &\quad\Longleftrightarrow\quad & q_1=q_1U_2U_1^{-1}=q_1U_1U_2^{-1}, 
\\
q_3U_1=q_3U_2 &\quad\Longleftrightarrow\quad & q_3=q_3U_2U_1^{-1}=q_3U_1U_2^{-1},
 \end{eqnarray*}
which leads to $U_1=U_2$ and so $\tilde{U_1}=\tilde{U_2}$. Thus, we showed that $\pi \colon V_{n,k} \to
G_{n,k}$ is a principal $U_n(k)$-bundle.

The differential of $\pi$ defines the vertical and horizontal spaces. The differential $d_{ [q]_{V_{n,k}}}\pi$ at $ [q]_{V_{n,k}}$ acts as  
$$ [q]_{V_{n,k}}\begin{pmatrix} \mathcal X_1 & \mathcal X_2 \\ -\bar{\mathcal X_2}^T & 0 \end{pmatrix} \mapsto [m]_{G_{n,k}}\begin{pmatrix} 0 & \mathcal X_2 \\ -\bar{\mathcal X_2}^T & 0 \end{pmatrix},$$
where $m$ is defined as above for $\pi$. 
So, the kernel of $d_{ [q]_{V_{n,k}}}\pi$ or the vertical space $\mathcal V_{[q]_{V_{n,k}}}$ is given by 
\begin{eqnarray*}
\mathcal{V}_{ [q]_{V_{n,k}}}=\left\{[q]_{V_{n,k}} \begin{pmatrix} \mathcal X_1 & 0 \\ 0 & 0 \end{pmatrix} \Big\vert\ \ \mathcal X_1 \in \mathfrak{u}(k)\right\} ,\qquad q\in U(n).
\end{eqnarray*}
We choose the horizontal space of $V_{n,k}$ by setting 
\begin{equation}\label{eq:horizontal}
\mathcal{H}_{ [q]_{V_{n,k}}}=\left\{[q]_{V_{n,k}} \begin{pmatrix} 0 & \mathcal X_2 \\ -\bar{\mathcal X_2}^T & 0 \end{pmatrix} \Big\vert \ \ \mathcal X_2 \in \mathbb{C}^{k \times (n-k)} \right\},\qquad q\in U(n).
\end{equation}
It is clear that $d\pi \colon TV_{n,k} \to TG_{n,k}$ is a linear isometry if we restrict it to the horizontal space, $\mathcal{H}_{ [q]_{V_{n,k}}}
\to T_{ \pi([q]_{V_{n,k}})}G_{n,k}$ for each $ [q]_{V_{n,k}} \in V_{n,k}$, therefore $\pi$ is a Riemannian submersion.

The $\mathfrak{u}_n(k)$-valued connection one-form $A_{ [q]_{V_{n,k}}}\colon T_{ [q]_{V_{n,k}}}V_{n,k} \to \mathfrak{u}_n(k)$ is given by 
$$A_ {[q]_{V_{n,k}}}\left(  [q]_{V_{n,k}} \begin{pmatrix} \mathcal X_1 & \mathcal X_2 \\ -\bar{\mathcal X_2}^T & 0 \end{pmatrix} \right):=\begin{pmatrix} \mathcal X_1 & 0 \\ 0 & 0 \end{pmatrix}\in \mathfrak{u}_n(k),\qquad \mathcal X_2 \in \mathbb{C}^{k \times (n-k)}.
$$

Now we can write precisely the normal sub-Riemannian geodesic on $V_{n,k}$ starting from a point $ [q]_{V_{n,k}}$ with initial velocity $v \in T_{ [q]_{V_{n,k}}}V_{n,k}$. It is given by
\begin{eqnarray}\label{sRgeodesic}
\gamma_v (t) &=& \exp_R(tv) \exp_{U_n(k)}(-tA(v)) \nonumber
\\
&=&  \pi_1\left(q\exp_{U(n)}\left(t 
\begin{pmatrix} \mathcal X_1 &  \mathcal X_2 \\ -\bar{ \mathcal X_2}^T & 0  
\end{pmatrix} \right)
\right) 
\exp_{U_n(k)}\left( -t \begin{pmatrix}  \mathcal X_1 & 0 \\ 0 & 0 \end{pmatrix} \right)
\end{eqnarray}
where $q \in U(n)$, $v= [q]_{V_{n,k}} \begin{pmatrix}  \mathcal X_1 &  \mathcal X_2 \\ -\bar{ \mathcal X_2}^T & 0 \end{pmatrix} \in T_{[q]_{V_{n,k}} }V_{n,k}$ with $\begin{pmatrix}  \mathcal X_1 &  \mathcal X_2 \\ -\bar{ \mathcal X_2}^T
& 0 \end{pmatrix} \in \mathfrak{u}(n)$.

We simplify the notation and  from now on write $q\in V_{n,k}$, $m\in G_{n,k}$, $U(k)$ for $U_n(k)$, $U(n-k)$ for $U_n(n-k)$, and $g$ for the Riemannian metric of constant bi-invariant type.


\section{The cut-locus of $V_{n,1}$}\label{sec:Vn1}


In this section we study the cut locus of the Stiefel manifold $V_{n,1}$ considered as a sub-Riemannian manifold by making use of the normal sub-Riemannian geodesics~\eqref{sRgeodesic}.
\begin{defi}
An absolutely continuous horizontal path that realizes the distance between two points is called a minimizing geodesic.
\end{defi}
Recall the definition of the sub-Riemannian cut locus.
\begin{defi}\label{def:cut_locus}
The cut locus of $q_0 \in Q$ in a sub-Riemannian manifold $(Q,\mathcal H,g_{\mathcal H})$  is a set $K_{q_0}\subset Q$ of points reached optimally by more than one horizontal geodesic, i.~e. the cut locus is
\begin{eqnarray*}
K_{q_0}:= \Big\{&q \in Q\ \vert\ \ \text{there exist}\ \  T \in \mathbb{R}^+,\  v_1, v_2 \in T_{q_0}Q,\ v_1 \not = v_2,\ \text{and } 
\\
&\text{minimizing horizontal geodesics} \ \ \gamma _{v_1}(t),\ \gamma _{v_2}(t),\ \ \text{starting from}\ \ q_0,\ \ \text{and}
\\
 & \gamma _{v_1}(T)= \gamma _{v_2}(T) =q\Big\} .
\end{eqnarray*}
If we replace minimizing horizontal geodesics into minimizing normal horizontal geodesics we obtain a definition of the normal sub-Riemannian cut locus. Further on we will work with cut locus, given in Definition~\ref{def:cut_locus}.
\end{defi}
Starting from now we will write $Id$ for the equivalence class $[I_n]_{V_{n,k}} \in V_{n,k}$. The main theorem is stated as following.

\begin{theorem}\label{th:Vn1}
The cut locus $K_{Id}$ on $V_{n,1}$ is given by 
$$L:=\left\{\left[\begin{pmatrix} C & 0 \\ 0 & D \end{pmatrix}\right]_{V_{n,1}} \Big\vert \ C \in U(1),\ D \in U(n-1) \right\} \setminus \left\{Id\right\}.
$$
\end{theorem}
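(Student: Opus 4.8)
The plan is to reduce the whole problem to an explicit computation of the normal geodesics \eqref{sRgeodesic} together with exploitation of the large isometry group that fixes $Id$. First I would identify $V_{n,1}$ with the unit sphere $S^{2n-1}\subset\mathbb C^{n}$ by sending a class $[q]_{V_{n,1}}$ to its first column $qe_1$; this is well defined because right multiplication by $U_n(n-1)$ fixes the first column and, conversely, two unitary matrices with equal first column differ by such an element. Under this identification the fibre of $\pi\colon V_{n,1}\to G_{n,1}$ through $Id$ is exactly the circle $\{e^{i\theta}e_1\}$, and the set $L$ is precisely this fibre with the point $\theta=0$ (that is, $Id$) removed. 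Thus the theorem asserts that $K_{Id}$ is the fibre through $Id$ minus $Id$ itself, in direct analogy with the $SU(2)$ case recalled in the introduction.

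Next I would make \eqref{sRgeodesic} fully explicit for $k=1$. Writing $v=\begin{pmatrix} i\alpha & \mathcal X_2\\ -\bar{\mathcal X_2}^T & 0\end{pmatrix}$ with $\alpha\in\mathbb R$, $\mathcal X_2\in\mathbb C^{1\times(n-1)}$, and setting $r:=|\mathcal X_2|$, one observes that the matrix $M$ being exponentiated preserves the two-dimensional subspace $\spn\{e_1,(0,\bar{\mathcal X_2}^T)\}$ and annihilates its orthogonal complement, so the computation reduces to exponentiating the $2\times 2$ skew-Hermitian block $\begin{pmatrix} i\alpha & r\\ -r & 0\end{pmatrix}$. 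With $\omega:=\sqrt{\alpha^2+4r^2}$, and after multiplying the first column by the factor $e^{-i\alpha t}$ coming from $\exp(-tA(v))$, this yields the closed form
\[
\gamma_v(t)=e^{-i\alpha t/2}\Big(\cos\tfrac{\omega t}{2}+\tfrac{i\alpha}{\omega}\sin\tfrac{\omega t}{2}\Big)e_1-e^{-i\alpha t/2}\,\tfrac{2}{\omega}\sin\tfrac{\omega t}{2}\,(0,\bar{\mathcal X_2}^T).
\]
From this one reads off that the curve has constant speed proportional to $r$, that it meets the fibre exactly at the times $t=2m\pi/\omega$, $m\in\mathbb Z^{+}$, and that at $t=2m\pi/\omega$ it reaches the point $(-1)^{m}e^{-i\alpha m\pi/\omega}e_1$.

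I would then determine the cut time along each geodesic. Setting $\beta:=\alpha/\omega\in(-1,1)$, the length up to the first fibre hit $t=2\pi/\omega$ is proportional to $\pi\sqrt{1-\beta^2}$, and the point reached is $-e^{-i\pi\beta}e_1=e^{i\pi(1-\beta)}e_1$; as $\beta$ ranges over $(-1,1)$ these points sweep out exactly $L$. The mechanism producing cut points is symmetry: the isometries $[q]_{V_{n,1}}\mapsto\big[\begin{pmatrix}1&0\\0&U\end{pmatrix}q\big]_{V_{n,1}}$, $U\in U(n-1)$, fix $Id$ and fix every fibre point $e^{i\theta}e_1$, while sending $\gamma_v$ to the geodesic with $\mathcal X_2$ replaced by $\mathcal X_2U^{-1}$. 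Hence each fibre point is reached at the same time and with the same length by a whole orbit of distinct geodesics, so by Definition~\ref{def:cut_locus} it lies in $K_{Id}$, forcing $L\subseteq K_{Id}$. Conversely, for a target $p=(p_1,p')$ with $p'\neq 0$ the formula forces $\bar{\mathcal X_2}^T$ to be parallel to $p'$, so the same $U(n-1)$-symmetry reduces the analysis to the three-sphere $V_{2,1}$, where the closed form shows directly that the shortest geodesic to $p$ is unique; thus off-fibre points are not cut points. Finally $Id$ is reached only by the constant geodesic and is therefore excluded, which explains the set difference defining $L$.

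The hard part will be the global optimality statement: proving that each geodesic genuinely minimizes up to its first fibre hit $t=2\pi/\omega$ and fails to minimize afterwards, and that off the fibre the minimizer really is unique. For the positive direction I would compute the first conjugate time from the explicit $\gamma_v$ and the first Maxwell (symmetry) time, show that the latter equals $2\pi/\omega$ and does not exceed the former, and conclude that the cut time is $2\pi/\omega$; this is where conjugate-point estimates and a careful comparison across the multi-parameter family $(\alpha,r,m)$ of geodesics reaching a common point are needed. One must also confirm that strictly abnormal minimizers do not intervene: since the horizontal distribution $\mathcal H$ in \eqref{eq:horizontal} is a fat, contact-type distribution of step two, all length-minimizers are normal, so the normal analysis above captures the full cut locus of Definition~\ref{def:cut_locus}.
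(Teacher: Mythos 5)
Your computations are correct, and your architecture is closer to the paper's than it might appear: your closed form for $\gamma_v$ agrees with the paper's formulas \eqref{eq:g1}--\eqref{eq:g2}, your first fibre-return time $2\pi/\omega$ is exactly the time at which the paper observes that $\gamma_v^3$ vanishes, and your $U(n-1)$-orbit argument for $L\subseteq K_{\Id}$ is the same mechanism used in Theorem~\ref{cutlocikn} (there one replaces $B$ by $-BU$, $U\in U(n-k)$, and uses that $\gamma_v^1$ depends on $B$ only through $B\bar B^T$); phrasing it through isometries fixing $\Id$ and each fibre point is a clean equivalent. Reducing the converse inclusion to $V_{2,1}$ by noting that any geodesic reaching $p=(p_1,p')$ with $p'\neq 0$ must have $\bar{\mathcal X_2}^T$ parallel to $p'$ is also legitimate, and is in effect how the paper argues (its Case 3 for $V_{n,1}$ appeals back to Case 3 of Lemma~\ref{cutloci12}).

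The genuine gap is that the heart of the theorem --- uniqueness of the minimizer to points off the fibre, equivalently the inclusion $K_{\Id}\subseteq L$ --- is announced rather than proved. The claim that ``the closed form shows directly that the shortest geodesic to $p$ is unique'' is not direct: one must exclude that two distinct data $(\alpha_1,r)$ and $(\alpha_2,r)$ (equal $r$ being forced by equal length) produce geodesics reaching the same point $q\notin L$ at the same time $T^*$. The paper's resolution is genuinely delicate: if $\alpha_1=\alpha_2$, equality of the $\gamma^3$-components forces the covectors to coincide; if $\alpha_1\neq\alpha_2$, equality of $|\gamma^3|$ together with injectivity of $\sin x/x$ on $(0,\pi)$ --- available precisely because $T^*\omega/2<\pi$ before the first fibre hit --- forces $\alpha_1=-\alpha_2$, after which equality of the $\gamma^1$-components yields $\tan(T^*\omega/2)/(T^*\omega/2)=\tan(T^*\alpha_2/2)/(T^*\alpha_2/2)$ with $0<T^*\alpha_2/2<T^*\omega/2<\pi$, impossible since $\tan x/x$ takes no value twice on that interval. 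Your fallback plan (compare first conjugate time with first Maxwell time) does not bypass this: identifying the first Maxwell time as $2\pi/\omega$, i.e.\ showing that no two distinct geodesics meet earlier, \emph{is} the uniqueness statement above, so the plan is circular at its hardest step. Note also that the paper never needs to determine cut times or compare across your parameter $m$ at all: for points of $L$ it takes one optimal geodesic and exhibits a second of equal length, and for points off $L$ it derives a contradiction from the assumed existence of two equal-length minimizers. Until the trigonometric analysis (or a genuinely executed conjugate-point argument) is supplied, the converse inclusion remains unproven.
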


Before we present the proof of Theorem~\ref{th:Vn1} we consider in details the particular case for $n=2$, $k=1$. It allows to understand the general idea of the proof without using tough technical calculations. 


\subsection{The cut locus of $V_{2,1}$}\label{subsec:V21}


Observe that $V_{2,1}$ is three dimensional and the distribution~\eqref{eq:horizontal} is strongly bracket generating. Recall the definition.

\begin{defi}\label{strbrack}
A smooth distribution $\mathcal H$ on $M$ is strongly bracket generating if for any non-zero section $\mathcal Z$ of $\mathcal H$, the tangent bundle $TM$ is generated by $\mathcal H$ and $[\mathcal Z,\mathcal H]$.
\end{defi}

For the manifold $V_{2,1}$ Definition~\ref{strbrack} is reduced to the statement that there exist two sections $\mathcal Z_1$ and $\mathcal Z_2$ of $\mathcal H$ such that $\spn \{\mathcal Z _1(q),\mathcal Z_2(q), [\mathcal Z_1,\mathcal Z_2](q)\}=T_qV_{2,1}$ for all 
$q\in V_{2,1}$.  We can choose, $\mathcal Z_1(q):=q\begin{pmatrix} 0 & 1 \\ -1 & 0 \end{pmatrix} $ and
$\mathcal Z_2(q):=q \begin{pmatrix} 0 & i \\ i & 0 \end{pmatrix}$. It is known, see for instance~\cite{B,M}, that on sub-Riemannian manifolds with strongly bracket generating distributions all minimizing geodesics are normal.

The tangent spaces at
the identity are given by
$$ T_{\Id}V_{2,1}=\left\{ \Id 
\begin{pmatrix} x_1 & x_2 \\ -\bar{x}_2 & 0 
\end{pmatrix} 
\Big\vert\ x_1=\lambda i,\ \lambda \in  \mathbb{R},\ x_2 \in \mathbb{C} \right\}
$$
and
$$ 
T_{\Id}Gr_{2,1}=\left\{ \Id 
\begin{pmatrix} 0 & x_2 \\ -\bar{x}_2 & 0 
\end{pmatrix} \Big\vert\ x_2 \in \mathbb{C} 
\right\}.$$
For a given initial vector $v=\Id \begin{pmatrix} x_1 & x_2 \\ -\bar{x}_2 & 0 \end{pmatrix} \in T_{\Id}V_{2,1}$ a normal sub-Riemannian
geodesic is written as
\begin{eqnarray*}
\gamma _{v}(t) 
&=& 
\pi_1(\exp_{U(2)} (tv)) \exp_{U(1)} \left(-t \begin{pmatrix} \lambda i & 0 \\ 0 & 0 \end{pmatrix} \right) 
\\
&=& 
\pi_1\left(\exp_{U(2)}\left( t \begin{pmatrix} \lambda i & x_2 \\ -\bar{x}_2 & 0 
\end{pmatrix} \right)\right) 
\begin{pmatrix} e^{- \lambda t i} & 0 \\ 0 & 1 \end{pmatrix} 
\\
&=& 
\pi_1\left(\begin{pmatrix} \gamma_v^1(t) & \gamma_v^2(t) \\ \gamma_v^3(t) & \gamma_v^4(t) \end{pmatrix}\right) 
=
 \left[\begin{pmatrix} \gamma_v^1(t) & \gamma_v^2(t) \\ \gamma_v^3(t) & \gamma_v^4(t)  \end{pmatrix}
\right]_{V_{n,k}}
\end{eqnarray*}
with
\begin{eqnarray*}
\gamma_v^1(t)
&=&
\Big(\frac {\lambda}{2\sqrt{\lambda ^2 +4x_2 \bar{x}_2 } }+ \frac {1}{2}\Big) \mu _1(- \lambda,x_2 , t) 
+ \Big(-\frac {\lambda}{2 \sqrt{\lambda ^2 +4x_2
\bar{x}_2 } }+ \frac {1}{2}\Big) \mu _2(- \lambda,x_2 , t), 
\\
\gamma_v^2(t)
&=& 
\frac {x_2 i}{\sqrt{\lambda ^2 +4x_2 \bar{x}_2 }}
\big(\mu _2 ( \lambda,x_2 , t) -\mu _1 ( \lambda,x_2 , t) \big), 
\\
\gamma_v^3(t)
&=& 
\frac{i}{4 x_2} 
\left( \frac{ \lambda ^2 }{ \sqrt{\lambda ^2 +4x_2 \bar{x}_2 }} - \sqrt{\lambda ^2 +4x_2 \bar{x}_2 } \right) 
\big( \mu _2(-\lambda , x_2 ,
t) - \mu _1(-\lambda , x_2 , t)\big)  
\\
&=& 
- \frac{\bar{x}_2i}{\sqrt{\lambda ^2 +4x_2 \bar{x}_2 }}
\big( \mu _2(-\lambda , x_2 , t) - \mu _1(-\lambda , x_2 , t)\big), 
\\
\gamma_v^4(t)
&=& 
- \frac{\mu_1( \lambda,x_2 , t)}{2 \sqrt{\lambda ^2 +4x_2 \bar{x}_2 }} 
\big(\lambda - \sqrt{\lambda ^2 +4x_2 \bar{x}_2 }\big) 
+ \frac{\mu_2(
\lambda,x_2 , t)}{2 \sqrt{\lambda ^2 +4x_2 \bar{x}_2 }} 
\big(\lambda + \sqrt{\lambda ^2 +4x_2 \bar{x}_2 }\big),
\end{eqnarray*}
where 
$$\mu_1(\lambda, x_2, t)= e^{\frac{ti}{2} ( \lambda + \sqrt{\lambda ^2 +4x_2 \bar{x}_2 })} \ \text{and}\ 
\mu_2(\lambda ,x_2, t)=e^{\frac{ti}{2} ( \lambda
- \sqrt{\lambda ^2 +4x_2 \bar{x}_2 })}.
$$
In calculations we used the diagonalization of the matrix $t \begin{pmatrix} \lambda i & x_2 \\ -\bar{x}_2 & 0 \end{pmatrix}=SDS^{-1}$ with 
\begin{equation*}
S = 
\begin{pmatrix} 1 & 1 \\  \\- \frac{i}{2x_2} (\lambda - \sqrt{\lambda ^2 +4x_2 \bar{x}_2 }) & - \frac{i}{2x_2} (\lambda + \sqrt{\lambda ^2 +4x_2
\bar{x}_2 }) 
\end{pmatrix}. 
\end{equation*}
\begin{equation*}
S^{-1}
= \begin{pmatrix} \frac{\lambda}{2\sqrt{\lambda ^2 +4x_2 \bar{x}_2 }}+\frac{1}{2} & & - \frac{x_2i}{\sqrt{\lambda ^2 +4x_2 \bar{x}_2 }} 
\\ \\ \frac{1}{2}
- \frac{\lambda}{2\sqrt{\lambda ^2 +4x_2 \bar{x}_2 }} &  &\frac{x_2 i}{\sqrt{\lambda ^2 +4x_2 \bar{x}_2 }} 
\end{pmatrix}, 
\end{equation*}
\begin{equation*}
D= \begin{pmatrix} it(\frac{\lambda + \sqrt{\lambda ^2 +4x_2 \bar{x}_2 }}{2}) & 0 \\ \\  0 & it( \frac{\lambda - \sqrt{\lambda ^2 +4x_2 \bar{x}_2 }}{2})
\end{pmatrix},
\end{equation*}
in order to express $\exp_{U(2)}\left( t \begin{pmatrix} \lambda i & x_2 \\ -\bar{x}_2 & 0 \end{pmatrix} \right)=S \exp_{U(2)}(D) S^{-1}$. 

\begin{lemma}\label{cutloci12}
The set 
$$L:=\left\{\left[\exp_{U(2)} \begin{pmatrix} c_1i & 0 \\ 0 & c_2i \end{pmatrix} \right]_{V_{2,1}} \Big\vert\ c_1, c_2 \in \mathbb{R} \right\} \setminus \{\Id\}
$$ is the cut locus $K_{\Id}$ of
$V_{2,1}$.
\end{lemma}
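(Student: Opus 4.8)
Since the distribution $\mathcal H$ on $V_{2,1}$ is strongly bracket generating, every minimizing geodesic is normal, so it suffices to decide which of the explicitly known curves $\gamma_v$ are minimizing and when two of them meet. The plan is first to simplify the endpoint. Writing $\omega:=\sqrt{\lambda^2+4x_2\bar x_2}$, the first column of $\gamma_v(t)$ collapses to
\[
\gamma_v^1(t)=e^{-\frac{\lambda t i}{2}}\Bigl(\cos\tfrac{\omega t}{2}+\tfrac{\lambda i}{\omega}\sin\tfrac{\omega t}{2}\Bigr),\qquad \gamma_v^3(t)=-\tfrac{2\bar x_2}{\omega}\,e^{-\frac{\lambda t i}{2}}\sin\tfrac{\omega t}{2},
\]
while the length on $[0,T]$ equals $2\sqrt2\,|x_2|\,T$. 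I would record two symmetries that govern the whole picture: the scaling $(\lambda,x_2,T)\mapsto(c\lambda,cx_2,T/c)$, which merely reparametrizes a geodesic, and the $U(1)$-action $x_2\mapsto e^{i\phi}x_2$, which rotates $\gamma_v^3$ by $e^{-i\phi}$ while fixing $\gamma_v^1$.

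Next I would pin down the cut points. A nonconstant geodesic ($x_2\neq0$) returns to the fibre $L$ exactly when $\gamma_v^3(T)=0$, that is $\sin\tfrac{\omega T}{2}=0$, hence $\omega T=2k\pi$; the endpoint is then $\bigl((-1)^k e^{-\lambda T i/2},0\bigr)\in L$ and the length is $2\sqrt2\,k\pi\sqrt{1-s^2}$ with $s:=\lambda/\omega\in(-1,1)$. The key point is that, for fixed $k$ and $s$, the circle $x_2\mapsto e^{i\phi}x_2$ produces genuinely distinct geodesics (their values $\gamma_v^3(t)$ differ for $0<t<T$) that all reach the same fibre point in the same length. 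Prescribing the target $(e^{i\alpha},0)$ with $\alpha\in(0,2\pi)$ gives the phase condition $k\pi(1-s)\equiv\alpha\ (\mathrm{mod}\ 2\pi)$, and minimizing $2\sqrt2\,k\pi\sqrt{1-s^2}$ over the admissible $(k,s)$ singles out $k=1$, $s=1-\alpha/\pi$, with shortest length $2\sqrt2\sqrt{\alpha(2\pi-\alpha)}$. Hence, once minimality is established, each point of $L$ is attained by a one-parameter family of minimizers and therefore lies in $K_{\Id}$, while $\Id$ (the case $\alpha=0$) is excluded because it is reached optimally only by the constant curve.

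It remains to prove the two substantive directions, and here lies the main obstacle: global minimality. The explicit formulas only exhibit candidate geodesics and their critical lengths, so I must rule out shorter competitors and verify that the geodesics above minimize up to their first return $T_1=2\pi/\omega$ to $L$. The cleanest route I foresee is through the Riemannian submersion $\pi$: horizontal length equals the length of the projected curve on $G_{2,1}\cong S^2$, and reaching a prescribed fibre element turns into the spherical isoperimetric problem of a shortest based loop enclosing a given signed area, whose extremals are exactly the circles traced by our projected geodesics. The $U(1)$-family of their horizontal lifts, all reuniting on $L$ at $t=T_1$, then realizes the first Maxwell set, and isoperimetric extremality yields minimality up to $T_1$. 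For the converse I would show that for an off-fibre target ($p_3\neq0$) the equation for $\gamma_v^3(T)$ already fixes $\arg x_2$, so the $U(1)$-freedom disappears and the endpoint map, restricted to geodesics run for $t<T_1$, is injective onto $V_{2,1}\setminus L$; thus off-fibre points carry a unique minimizer and are not cut points. Combining the isoperimetric minimality with this injectivity identifies $K_{\Id}$ with $L$.
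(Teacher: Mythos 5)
Your computations are correct and match the paper's: the simplified formulas for $\gamma_v^1$ and $\gamma_v^3$, the length $2\sqrt2\,|x_2|\,T$, the return condition $\omega T\in 2\pi\mathbb Z$ (with $\omega=\sqrt{\lambda^2+4|x_2|^2}$), and the $U(1)$ phase symmetry $x_2\mapsto e^{i\phi}x_2$ are exactly the ingredients the paper uses. But the two steps you yourself flag as ``substantive'' are precisely the ones left unproven, and both are genuine gaps. For the inclusion $L\subset K_{\Id}$ you make the problem harder than it needs to be: you set out to determine \emph{which} candidate geodesics are globally minimizing, and propose to certify this via the spherical isoperimetric (Dido) problem on $G_{2,1}\cong S^2$. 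The claim ``isoperimetric extremality yields minimality up to $T_1$'' is a nontrivial global statement that you do not prove, and your optimization over $(k,s)$ only compares lengths \emph{within} the family of candidates, which cannot by itself rule out shorter competitors. The paper sidesteps the whole issue: since $V_{2,1}$ is compact and $\mathcal H$ is bracket generating, every point is reached by \emph{some} minimizing geodesic, and since $\mathcal H$ is strongly bracket generating every minimizer is normal, hence of the explicit form $\gamma_v$ with $x_2\neq 0$. Taking an arbitrary minimizer to $q\in L$, the condition $\gamma_v^2(T)=0$ forces $\omega T\in 2\pi\mathbb Z$, and then the phase rotation produces a second, distinct geodesic with the same endpoint, same time and same length --- hence also minimizing. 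No identification of the minimizers, and no isoperimetric input, is needed.

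The converse inclusion $K_{\Id}\subset L$ is where your proposal is thinnest. It rests on the assertion that ``the endpoint map, restricted to geodesics run for $t<T_1$, is injective onto $V_{2,1}\setminus L$.'' Observing that $\gamma_v^3(T)$ fixes $\arg x_2$ only kills the $U(1)$ freedom; it says nothing about two minimizers with different parameter pairs $(\lambda_1,|x_2|)$ and $(\lambda_2,|y_2|)$ meeting at the same off-fibre point at the same time, and this is where the actual work lies. The paper proves exactly this in three cases: equality of lengths of the two minimizers forces $|x_2|=|y_2|$; if moreover $\lambda_1=\lambda_2$, equality of $\gamma^3$ forces $x_2=y_2$, i.e.\ $v_1=v_2$; and if $\lambda_1\neq\lambda_2$, comparing $|\gamma^3|$ via the injectivity of $\sin x/x$ on $(0,\pi)$ forces $\lambda_1=\pm\lambda_2$, after which the remaining case $\lambda_1=-\lambda_2$ is excluded by comparing $\gamma^1$ through a $\tan x/x$ monotonicity argument. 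This last case is the delicate one and is invisible in your sketch. (Note also that the bound $T<2\pi/\omega$ for minimizers to off-fibre points is not automatic; it uses the already-established inclusion $L\subset K_{\Id}$, so the order of the two halves matters.) Without these arguments, or an actual proof of your injectivity claim, neither inclusion is established.
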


\begin{proof}
It is clear that it is enough to concentrate on the calculation of the first column $\begin{pmatrix} \gamma_v^1(t)  \\  \gamma_v^3(t)\end{pmatrix}$ in the equivalence class $\left[\begin{pmatrix} \gamma_v^1(t) & \gamma_v^2(t) \\ \gamma_v^3(t) & \gamma_v^4(t) \end{pmatrix}\right]_{V_{2,1}}$. We show first that if $q\in L$, then there are several minimizing geodesics reaching $q$ in the same time.

Suppose there exists an initial vector $v^*= \begin{pmatrix} \lambda^* i & x_2^* \\ -\bar{x}_2^* & 0 \end{pmatrix} $ with $x_2^*\not =0$, and $T \in \mathbb{R}^+$ such that the minimizing geodesic $\gamma
_{v^*}$ connects $\Id\in V_{2,1}$ with 
$$q=\gamma
_{v^*}(T^*)
= \left[\begin{pmatrix} e^{c_1i} & 0 \\ 0 & e^{c_2i} \end{pmatrix}\right]_{V_{2,1}} \in L.
$$ 
We see that $\gamma_{v^*}^2(T^*)=0$. It implies the following equivalences
\begin{eqnarray}\label{equivalences}
&  \qquad& \mu _1(\lambda^*, x_2^*, T^*)=\mu _2(\lambda^*, x_2^*, T^*) 
\\
& \Longleftrightarrow & e^{\frac{T^*i}{2} ( \lambda^*+ \sqrt{(\lambda^*) ^2 +4|x_2|^2 })} = e^{\frac{T^*i}{2} ( \lambda^* - \sqrt{(\lambda^*)
^2 +4|x_2|^2})} \nonumber
\\
& \Longleftrightarrow & e^{\frac{T^*i}{2} \sqrt{(\lambda^*)^2 +4|x_2|^2}}= e^{- \frac{T^*i}{2} \sqrt{(\lambda^*)^2 +4|x_2|^2} }\nonumber
\\
& \Longleftrightarrow & \frac{T^*}{2} \sqrt{(\lambda^*) ^2 +4x_2^* \bar{x}_2^*} = k \pi, \quad\text{for some}\quad k\in\mathbb Z.\nonumber
\end{eqnarray}
Let us fix such $k\in\mathbb Z$ and note
\begin{eqnarray*}
\mu _1(\lambda^*, x_2^*, T^*) 
= 
e^{\frac{T^*i}{2} ( \lambda^*+ \sqrt{(\lambda^*) ^2 +4|x_2|^2})}
= \pm e^{\frac{T^*i}{2}  \lambda^*} 
=\mu _2(\lambda^*, x_2^*, T^*).
\end{eqnarray*}
We conclude that functions $\mu _{1}(\lambda^*, x_2^*, T^*)$ and $\mu _{2}(\lambda^*, x_2^*, T^*)$ are independent of $x_2^*$ itself, but depend on the norm $|x_2^*|^2=x_2^*\bar x_2^*$. Let us pick up another initial velocity vector
$v_1 = \begin{pmatrix} \lambda^* i & y_2 \\ -\bar{y}_2 & 0 \end{pmatrix} $ with $|y_2|^2=|x_2^*|^2$ and $x_2^*\not =y_2$. Then $\gamma _{v_1}(T^*)=q$. 

In the next step we show that the length of the geodesic $\gamma _{v_1}$ coincides with the length of the minimizing geodesic $\gamma _{v^*}$. We actually claim that the length of any geodesic $\gamma _{v}$ with $v= \begin{pmatrix} \lambda i & x_2 \\ -\bar{x}_2 & 0 \end{pmatrix} $ depends on the fixed final time $T$ and the norm $|x_2|$. 

We recall that the square of the length of the velocity vector $\dot{\gamma}_{v}(t)$ is given by
$$\langle\dot{\gamma}_{v}(t),\dot{\gamma}_{v}(t)\rangle_{V_{2,1}}=-2n\tr\big([\gamma_v(t)^{-1}\dot{\gamma}_{v}(t)]^2\big).$$
Fix a point
$p(t)=\exp_{U(2)}\left( t \begin{pmatrix} \lambda i & x_2 \\ -\bar{x}_2 & 0 \end{pmatrix} \right)
\begin{pmatrix} e^{- \lambda t i} & 0 \\ 0 & 1 \end{pmatrix}\in U(2)$ such that $\gamma_v(t)=\pi_1\big(p(t)\big)$. 
To calculate 
$\dot{\gamma}_{v}(t)=d_{p(t)}\pi_1p'(t)$ 
we use the chain rule 
\begin{eqnarray*}
\dot{\gamma}_{v}(t) &=& d_{p(t)}\pi_1\bigg[ \exp_{U(2)} \left\{  t \begin{pmatrix} \lambda i & x_2 \\ -\bar{x}_2 & 0 \end{pmatrix} \right\} \begin{pmatrix} \lambda i & x_2 \\ -\bar{x}_2 & 0
\end{pmatrix} \begin{pmatrix} e^{- \lambda t i} & 0 \\ 0 & 1 \end{pmatrix} 
\\ 
&+& \exp_{U(2)}\left\{ t \begin{pmatrix} \lambda i & x_2 \\ -\bar{x}_2 & 0 \end{pmatrix} \right\} 
\begin{pmatrix} e^{- \lambda t i} & 0 \\ 0 & 1 \end{pmatrix}
\begin{pmatrix} - \lambda i & 0 \\ 0 & 0 \end{pmatrix} \bigg]
\\
&=& d_{p(t)}\pi_1\bigg[\exp_{U(2)}\left\{ t \begin{pmatrix} \lambda i & x_2 \\ -\bar{x}_2 & 0 \end{pmatrix} \right\} \begin{pmatrix} e^{- \lambda t i} & 0 \\ 0 & 1 \end{pmatrix}
\begin{pmatrix} \lambda i & x_2 e^{\lambda i t} \\ - \bar{x}_2 e^{- \lambda i t} & 0 \end{pmatrix} 
\\
&+& 
\exp_{U(2)}\left\{ t \begin{pmatrix} \lambda i & x_2 \\ -\bar{x}_2 & 0 \end{pmatrix} \right\} 
\begin{pmatrix} e^{- \lambda t i} & 0 \\ 0 & 1 \end{pmatrix}
\begin{pmatrix} - \lambda t i & 0 \\ 0 & 0 \end{pmatrix}\bigg] 
\\
&=& 
d_{p(t)}\pi_1\left[\exp_{U(2)}\left\{ t \begin{pmatrix} \lambda i & x_2 \\ -\bar{x}_2 & 0 \end{pmatrix} \right\} 
\begin{pmatrix} e^{- \lambda t i} & 0 \\ 0 & 1 \end{pmatrix}
\begin{pmatrix} 0 & x_2 e^{\lambda i t} \\ - \bar{x}_2 e^{- \lambda i t} & 0 \end{pmatrix}\right] 
\\
&=& 
\gamma_{v}(t) \begin{pmatrix} 0 & x_2 e^{\lambda i t} \\ - \bar{x}_2 e^{- \lambda i t} & 0 \end{pmatrix} .
\end{eqnarray*}
It follows that 
$
\langle\dot{\gamma}_{v}(t),\dot{\gamma}_{v}(t)\rangle_{V_{2,1}}= -4 \tr 
\begin{pmatrix} -|x_2|^2 & 0 \\ 0 & -|x_2|^2 \end{pmatrix} =8|x_2|^2
$.
Since the length of the geodesic $\gamma _{v}$ depends only on $T$ and the norm $|x_2|$ we conclude that 
$\gamma_{v_1}$ is a minimizing geodesic from the identity to $q$. With this we finished to show the inclusion~$L\subset K_{\Id}$. 

To  prove the converse inclusion $K_{\Id} \subset L$ we use a contradiction.  
Suppose $q \in V_{2,1} \setminus L$, but $q\in K_{\Id}$, i.~e. there exist $v_1 = \begin{pmatrix} \lambda_1 i & x_2 \\ -\bar{x}_2 & 0 \end{pmatrix}$ , $v_2 = \begin{pmatrix}
\lambda_2 i & y_2 \\ -\bar{y}_2 & 0 \end{pmatrix} \in \mathfrak{u}(n)$ with $v_1 \not = v_2$ such that $\gamma _{v_1}$ and $\gamma_{v_2}$ are
minimizing geodesics from the identity to $q$, which reach the point $q$ for the first time at the moment $T \in \mathbb{R}^{+}$. Note that values $x_2$ and $y_2$ do not vanish as
otherwise $\gamma_{v_{1}}(t)=\gamma_{v_{2}}(t)=\Id$ for all $t \in \mathbb{R}$. 

We observe that for any unitary matrix $q=\begin{pmatrix} q_1 & q_2 \\ q_3 & q_4 \end{pmatrix}$ one obtains $q_2\neq 0\ \ \Leftrightarrow\ \ q_3\neq 0$.
It follows that if $q\in V_{2,1} \setminus L$, then
\begin{equation*}
\gamma_{v_1}^2\neq 0,\ \gamma_{v_1}^3 \neq0 \ \ \Longleftrightarrow \ \  \mu _2 ( \lambda,x_2 , T) \neq \mu _1 ( \lambda,x_2 , T)
 \ \ \Longleftrightarrow \ \ \frac{T}{2}\sqrt{\lambda ^2 +4|x_2|^2} \not \in \pi \mathbb{Z}
\end{equation*}
by~\eqref{equivalences}. It immediately implies 
$T< \min\Big\{\frac{2 \pi}{\sqrt{\lambda_1^2+4|x_2|^2}} , \frac{2 \pi}{\sqrt{\lambda_2^2+4|y_2|^2} }\Big\}$. 
In the next step we show that neither of these inequalities can be realized under our assumptions.

{\textbf{Case 1.} } Assume that $|x_2| \neq |y_2|$ and $\lambda_1$, $\lambda_2$ are arbitrary.
Then $g(v_1,v_1)=8|x_2|^2 \not = 8|y_2|^2=g(v_2,v_2)$, that implies that the length of both minimizing geodesics $\gamma_{v_1}$ and $\gamma_{v_2}$ is different, which is a
contradiction to the assumption that they are both minimizing at the same time.

{\textbf{Case 2}.} Let $|x_2|=|y_2|$ and $\lambda_1 = \lambda_2$.
As $\gamma_{v_1}^3(T) =\gamma_{v_2}^3(T) \neq 0$ it follows that
\begin{align*}
\gamma_{v_1}^3(T) = &-\frac {\bar{x}_2 i}{\sqrt{\lambda_1 ^2 +4|x_2|^2  }}(\mu _2 ( \lambda_1,x_2 , T) -\mu _1 ( \lambda_1,x_2 , T) ) 
\\
= &-\frac {\bar{y}_2
i}{\sqrt{\lambda_2 ^2 +4|y_2|^2 }}(\mu _2 ( \lambda_2,y_2 , T) -\mu _1 ( \lambda_2,y_2 , T) ) =\gamma_{v_2}^3(T) 
\\
\text{if and only if}&\qquad x_2=y_2,
\end{align*}
because $\sqrt{\lambda_1 ^2 +4|x_2|^2}=\sqrt{\lambda_2 ^2 +4|y_2|^2}$,  $\mu _{1} ( \lambda_1,x_2 , T)=\mu _{1} ( \lambda_2,y_2 , T)$, and $\mu _{2} ( \lambda_1,x_2 , T)=\mu _{2} ( \lambda_2,y_2 , T)$ by definition.
But the equality $x_2=y_2$ implies $v_1=v_2$, that leads to a contradiction.

{\textbf{Case 3}.} Finaly we suppose that $\lambda_1 \neq \lambda_2$ and $|x_2|=|y_2|$.
As in the previous case the equality $\gamma^3_{v_1}(T)=\gamma^3_{v_2}(T)$ implies $| \gamma^3_{v_1}(T) | =|\gamma^3_{v_2}(T) |$ and
$$
\frac{1}{ \sqrt{\lambda_1 ^2 +4|x_2|^2 }} | \mu_2(\lambda_1 , x_2, T)-\mu_1(\lambda_1 , x_2, T) | = \frac{1}{ \sqrt{\lambda_2 ^2 +4|y_2|^2}} |\mu_2(\lambda_2 , y_2, T) -\mu_1(\lambda_2 , y_2, T)|.
$$
Taking into account $|\exp \Big(\frac{Ti\lambda_{j}}{2}\Big)|=1$ for $j=1,2$, we obtain
\begin{eqnarray*}
|\mu_2(\lambda_j , x_2, T)-\mu_1(\lambda_j , x_2, T) |
&=& | e^{-\frac{Ti}{2} \sqrt{ \lambda_j^2+4|x_2|^2}}-e^{\frac{Ti}{2} \sqrt{
\lambda_j^2+4|x_2|^2}} |
 \\
&=& 2 \sin \Big(\frac{T}{2} \sqrt{ \lambda_j^2+4|x_2|^2}\Big),\quad j=1,2,
\end{eqnarray*}
as $\sin x>0$ for $x \in (0,\pi)$. These both equations lead to 
\begin{eqnarray*}
\frac{2 \sin \big(\frac{T}{2} \sqrt{ \lambda_1^2+4|x_2|^2}\big)} {\sqrt{ \lambda_1^2+4|x_2|^2}} 
&=& \frac{2 \sin (\frac{T}{2} \sqrt{
\lambda_2^2+4|y_2|^2})} {\sqrt{ \lambda_2^2+4|y_2|^2}} 
\\
\Longleftrightarrow\quad \frac{2 \sin (\frac{T}{2} \sqrt{ \lambda_1^2+4|x_2|^2})} {T\sqrt{ \lambda_1^2+4|x_2|^2}} 
&=& \frac{2 \sin (\frac{T}{2} \sqrt{
\lambda_2^2+4|y_2|^2})} {T\sqrt{ \lambda_2^2+4|y_2|^2}}.
\end{eqnarray*}
Since the function $\frac{\sin x}{x}$ is injective on the interval $(0,\pi)$ we obtain $\frac{T}{2} \sqrt{ \lambda_1^2+4|x_2|^2}=\frac{T}{2} \sqrt{ \lambda_2^2+4
|y_2|^2}$ which is equivalent to $\lambda_1=\pm \lambda_2$. 

We only need to consider the case $\lambda_1=- \lambda_2$. Note that $\mu_{j} (\lambda_2 , x_2,T)=\mu_{j}(\lambda_2 , y_2,T)$, $j=1,2$, and
$$
\mu_{1} (-\lambda_2 , x_2,T)=\frac{1}{\mu_{2}(\lambda_2 , y_2,T)},\qquad
\mu_{2} (-\lambda_2 , x_2,T)=\frac{1}{\mu_{1}(\lambda_2 , y_2,T)}.
$$ From this it follows that
\begin{equation}\label{tan}
\gamma^1_{v_1}(T)=\gamma^1_{v_2}(T)\quad
\Longleftrightarrow\quad \frac{\tan (\frac{T}{2} \sqrt{ \lambda_2^2+4|y_2|^2})}{\sqrt{ \lambda_2^2+4|y_2|^2} }  =  \frac{\tan (\frac{T \lambda_2}{2})}{\lambda_2}.
\end{equation} 
Since $0<\frac{T \lambda_2}{2}<\frac{T}{2} \sqrt{ \lambda_2^2+4y_2\bar{y}_2}< \pi$ the equality~\eqref{tan} is not true.

\begin{figure}[h]\label{tanxx}
 \centering
 \includegraphics{./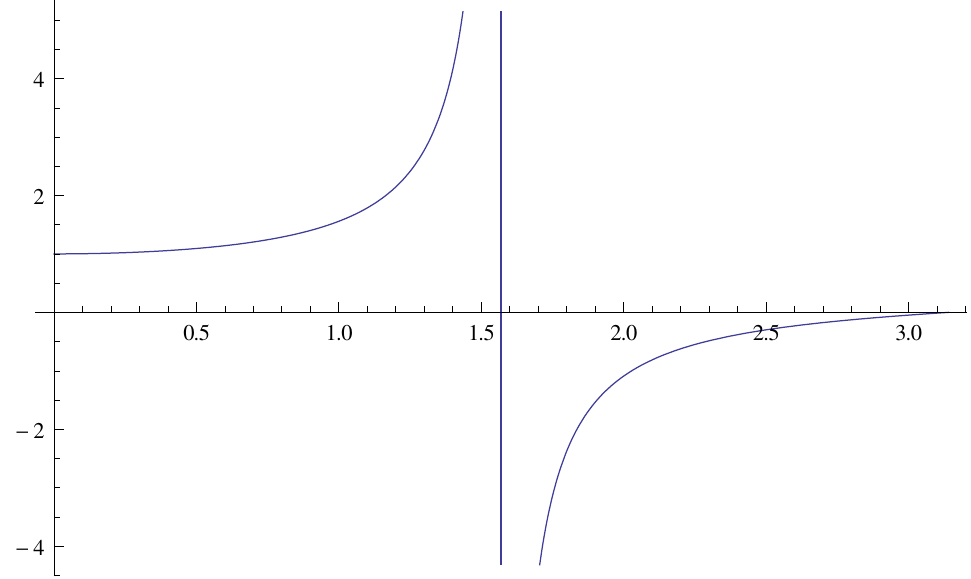}
 \caption{$\frac{\tan (x) }{x}$ on the interval $[0 \,, \pi ]$}
 \label{fig:pic1}
\end{figure}

Figure~\ref{tanxx} illustrates that $y_1<y_2$ implies $\frac{\tan y_1}{y_1}\neq \frac{\tan y_2}{y_2}$. 
The similar calculations can be found in~\cite[p.~1871]{B}.

These 3 cases finish the proof of the theorem.
\end{proof}


\subsection{Isomorphism between $V_{2,1}$ and $SU(2)$}


In this subsection we show that the results obtained above recover the results obtained in~\cite{B}. 
An element $q$ of $V_{2,1}$ is an equivalence class which can be written as
$$ [q]_{V_{2,1}}= \left\{ \begin{pmatrix} \alpha & \exp ( \lambda i ) \bar{\beta} \\ \beta & -\exp (\lambda  i) \bar{\alpha} \end{pmatrix} \Big\vert  \lambda \in (0, 2 \pi ) \right\}.
$$
Since $\begin{pmatrix} \alpha & \exp ( \lambda i ) \bar{\beta} \\ \beta & -\exp (\lambda i) \bar{\alpha} \end{pmatrix}$ is a unitary matrix we know
that the norm $\lVert \alpha \rVert ^2 +
\lVert \beta \rVert ^2 $ of the vector $\begin{pmatrix} \alpha \\ \beta \end{pmatrix}$ is equal one. Furthermore, we note that points $q\in V_{2,1}$ can be parametrized by the vector $\begin{pmatrix} \alpha \\ \beta \end{pmatrix}$.
Recall the definition of the group $SU(2)=\left\{ \begin{pmatrix} \alpha & \beta \\ -\bar{\beta} & \bar{\alpha} \end{pmatrix} \Big\vert \lVert \alpha \rVert ^2 + \lVert \beta
\rVert ^2 =1 \right\}$. So, it is clear that every element of $SU(2)$ can be represented by the vector $\begin{pmatrix} \alpha \\ \beta \end{pmatrix}$. It follows
that the both manifolds are diffeomorphic under the mapping $f\colon V_{2,1} \to SU(2)$, $[g]_{V_{2,1}} \mapsto
\begin{pmatrix} \alpha & \beta \\ -\bar{\beta} & \bar{\alpha} \end{pmatrix}$. The metric in both cases is left invariant, arising from an inner product on Lie algebras making basis of Lie algebra orthogonal. The horizontal distribution is orthogonal to the vertical one.

\begin{remark}
The set $L$ as a subset of $V_{2,1}$ depends only on $c_1 \in (0,2\pi)$, since the part depending on $c_2$ is quotient out. This implies that
the cut locus of $SU(2)$, which is given by $\left\{ \begin{pmatrix} \exp (c_1 i ) & 0 \\ 0 & \exp (-c_1 i ) \end{pmatrix} \Big\vert c_1 \in (0,2\pi ) \right\}$~\cite{B}, has a bijective relation under the map $f$ to the cut locus of $V_{2,1}$,  given in Lemma~\ref{cutloci12}. 
\end{remark}

Now we proceed to the proof of Theorem~\ref{th:Vn1} that describes the cut locus from the identity on $V_{n,1}$.

\begin{proof} We only need to show the inclusion $K_{Id}\subset L$ since the converse inclusion $L\subset K_{Id}$ will be proved in 
Theorem~\ref{cutlocikn} for the more general case $V_{n,k}$.  

First of all we claim that in the case of $V_{n,1}$ there are no abnormal minimizing geodesics because the distribution is strongly bracket generating. To show that the horizontal distribution is strongly bracket generating we consider an arbitrary element $\begin{pmatrix} 0 & B \\ -\bar{B}^T & 0 \end{pmatrix} \in \mathcal{H}_{\Id}$ with $B=(b_1, \dotso , b_{n-1}) \in \mathbb{C}^{1 \times (n-1)}$. Take basis elements $\begin{pmatrix} 0 & E_{mj} \\ -\bar{E}_{mj}^T & 0 \end{pmatrix}$, $m\in\{0,1\}$, and $j\in\{1, \dotso , n-1\}$. Here  
$E_{mj}\in \mathbb{C}^{1 \times (n-1)}$ is the row with entry $i^m$ at the place $j$ and zeros everywhere else.
Then, the commutator is written as
\begin{eqnarray*}
\left[\begin{pmatrix} 0 & B \\ -\bar{B}^T & 0 \end{pmatrix},\begin{pmatrix} 0 & E_{mj} \\ -\bar{E}_{mj}^T & 0 \end{pmatrix}\right] 
&=& 
\begin{pmatrix} -B\bar{E}_{mj}^T + E_{mj} \bar{B}^T & 0 \\ 0 & 0 \end{pmatrix},
\end{eqnarray*}
 where 
$$-B\bar{E}_{mj}^T + E_{mj}\bar{B}^T = \begin{cases} -2i\im(b_j), & m=0 \\ -2i\re(b_j), & m=1 \end{cases}.$$
An arbitrary choice of $B$ and linearity of Lie bracket imply that any $B$ generates the whole vertical space which allows to conclude that the distribution $\mathcal H$ is strongly bracket generating. 

Now we calculate the precise form of a geodesic $\gamma_v$, concentrating on components $\gamma_v^1$ and $\gamma_v^3$. 
Let the velocity vector be given by $v=\begin{pmatrix} xi & B \\ -\bar{B}^T & 0 \end{pmatrix}$, where 
$x \in \mathbb{R}$ and $B\in \mathbb{C}^{1 \times (n-1)}$.  Recall $\exp(tv)=\sum_{n=0}^{\infty}{\frac{t^n}{n!}v^n}$. 
First we will calculate the two parts of $v^n:=v(n)=\begin{pmatrix} v_1(n) & v_2(n) \\ v_3(n) & v_4(n) \end{pmatrix}$, namely $v_1(n)$ and $v_2(n)$. From the recursion formula $v^n=v^{n-1}v$ it follows that 
$$v_1(n)=v_1(n-1)xi-v_2(n-1)\bar{B}^T=v_1(n-1)xi-v_1(n-2)B\bar{B}^T$$ as $v_2(n)=v_1(n-1)B$. Furthermore, as $v^n=vv^{n-1}$ 
we deduce $$v_3(n)=-\bar{B}^Tv_1(n-1).$$
Having the initial values $v_1(0)=1$, $v_1(1)=xi$, and $v_3(0)=0$ we obtain that 
\begin{eqnarray*}
v_1(n)
&=&
\frac{2^{-n-1}}{i\sqrt{x^2+4B\bar{B}^T}}\big(ix((i\sqrt{x^2+4B\bar{B}^T}+ix)^n-(ix-i\sqrt{x^2+4B\bar{B}^T})^n\big) 
\\
&+&
i\sqrt{x^2+4B\bar{B}^T}\big((ix-i\sqrt{x^2+4B\bar{B}^T})^n+(i\sqrt{x^2+4B\bar{B}^T}+ix)^n)\big) 
\end{eqnarray*}
and we obtain for $\exp(tv):=\begin{pmatrix} \exp(tv)_1 & \exp(tv)_2 \\ \exp(tv)_3 & \exp(tv)_4 \end{pmatrix}$
\begin{eqnarray*}
\exp(tv)_1
&=&
\sum_{n=0}^{\infty}{\frac{t^n}{n!}v_1(n)} 
=\frac{1}{2i\sqrt{x^2+4B\bar{B}^T}}\left(e^{-\frac{it}{2}(\sqrt{x^2+4B\bar{B}^T}-x}\right) 
\\ 
&\times & 
\Big(i\sqrt{x^2+4B\bar{B}^T} \big(e^{it\sqrt{x^2+4B\bar{B}^T}}+1\big)+ix\big(e^{it\sqrt{x^2+4B\bar{B}^T}}-1)\big)\Big) 
\\
&=& 
\frac{1}{2\sqrt{x^2+4B\bar{B}^T}}\left(e^{-\frac{it}{2}(\sqrt{x^2+4B\bar{B}^T}-x}\right) 
\\ 
&\times&
\Big(\sqrt{x^2+4B\bar{B}^T}\big(e^{it\sqrt{x^2+4B\bar{B}^T}}+1\big)+x\big(e^{it\sqrt{x^2+4B\bar{B}^T}}-1)\big)\Big).
\end{eqnarray*}
The first part $\gamma^1_v(t)$ of the normal geodesic $\gamma_v(t)=\begin{pmatrix} \gamma^1_v(t) &\gamma^2_v(t) \\ \gamma^3_v(t) & \gamma^4_v(t)\end{pmatrix}$ is written as
\begin{eqnarray}\label{eq:g1}
\gamma_v^1(t)
&=& 
\exp_{U(n)}(tv)_1\exp_{U(1)}(-tix ) 
=\frac{1}{2\sqrt{x^2+4B\bar{B}^T}}e^{-\frac{it}{2}(\sqrt{x^2+4B\bar{B}^T}+x)}\nonumber
\\
&\times& \left(\sqrt{x^2+4B\bar{B}^T}
(e^{it\sqrt{x^2+4B\bar{B}^T}}+1\big)\right.
+
\left.x(e^{it\sqrt{x^2+4B\bar{B}^T}}-1)\right),
\end{eqnarray}
that coincides with calculations in the case $n=2$. 

The second important part of the geodesic is
\begin{eqnarray*}
\exp(tv)_3
&=& 
\sum_{n=0}^{\infty}{\frac{t^n}{n!}v_3(n)}=\sum_{n=1}^{\infty}{\frac{t^n}{n!}v_3(n)} 
\\
&=& 
\sum_{n=0}^{\infty}{\frac{t^{n+1}}{(n+1)!}v_3(n+1)}=\sum_{n=0}^{\infty}{\frac{t^{n+1}}{(n+1)!}\big(-\bar{B}^Tv^1(n)} \big)
\\
&=& 
-\bar{B}^T\frac{1}{i\sqrt{x^2+4B\bar{B}^T}}e^{-\frac{ti}{2}(\sqrt{x^2+4B\bar{B}^T}-x)}
\left(e^{ti\sqrt{x^2+4B\bar{B}^T}}-1\right),
\end{eqnarray*}
\begin{eqnarray}\label{eq:g2}
\gamma_v^3(t)
&=&
\exp_{U(n)}(tv)_3\exp_{U(1)}(-tix) \nonumber
\\ 
&=& 
-\bar{B}^T\frac{1}{i\sqrt{x^2+4B\bar{B}^T}}
e^{-\frac{ti}{2}(\sqrt{x^2+4B\bar{B}^T}+x)}\left(e^{ti\sqrt{x^2+4B\bar{B}^T}}-1\right).
\end{eqnarray}
It follows that $\gamma_{v}^3(t)=0$ for $t=\frac{2\pi}{\sqrt{x^2+4B\bar{B}^T}}$. That means that the geodesic reaches the set $L$ 
at $t=\frac{2\pi}{\sqrt{x^2+4B\bar{B}^T}}$, and since $L\subset K_{Id}$ it is also reaches the cut locus.   
This implies that the geodesic loses its optimality at the latest $t=\frac{2\pi}{\sqrt{x^2+4B\bar{B}^T}}$.

Having exact formulas for coordinates of geodesics we proceed to the core of the proof. Suppose $q\in V_{n,1}\setminus L$ but $q\in K_{\Id}$, and there exist two optimal normal geodesics $\gamma_{v_1}$ and $\gamma_{v_2}$ with 
$\gamma_{v_1}(0)=\gamma_{v_2}(0)=\Id$, $\gamma_{v_1}(T^*)=\gamma_{v_2}(T^*)=q$ and 
$v_1=\begin{pmatrix} x_1i & B \\ -\bar{B}^T & 0 \end{pmatrix}$, $v_2=\begin{pmatrix} x_2i & E \\ -\bar{E}^T & 0 \end{pmatrix}$ such that  $v_1\not = v_2$ and 
$x_{j} \in \mathbb{R}$, $j=1,2$ and $B,E \in \mathbb{C}^{1 \times (n-1)}$. 

Further we argue mostly as in the proof of Lemma~\ref{cutloci12}.

{\textbf{Case 1.}} Assume $B\bar{B}^T \neq E\bar{E}^T$.  
Since the length of both geodesics $\gamma_{v_1}$ and $\gamma_{v_2}$ should coincide, it 
can be shown, as in the proof of Proposition~\ref{length}, that 
$$ B\bar{B}^T= \tr(B\bar{B}^T) =\| B \| ^2 = \Vert E \Vert^2= \tr(E\bar{E}^T)=E \bar{E}^T$$
This is a contradiction. 

{\textbf{Case 2.}} Let $x_1 = x_2$ and $\|B\|^2 = \|E\|^2$. 
 It follows from 
\begin{eqnarray*}
\gamma_{v_1}^3(T^*)&=&\gamma_{v_2}^3(T^*) \qquad\Longleftrightarrow\qquad
\\ 
&-&
\bar{B}^T\frac{1}{i\sqrt{x_1^2+4B\bar{B}^T}}e^{-\frac{iT}{2}(\sqrt{x_1^2+4B\bar{B}^T}+x_1)}(e^{iT\sqrt{x_1^2+4B\bar{B}^T}}-1)  
\\ 
=&-&\bar{E}^T\frac{1}{i\sqrt{x_2^2+4E\bar{E}^T}}e^{-\frac{iT}{2}(\sqrt{x_2^2+4E\bar{E}^T}+x_2)}(e^{iT\sqrt{x_2^2+4E\bar{E}^T}}-1)
\end{eqnarray*}
that $\bar{B}^T=\bar{E}^T$ and so $B=E$, that leads to the contradiction with $v_1\not=v_2$.

{\textbf{Case 3.}} Let now $x_1\not=x_2$ and $\|B\|^2 = \|E\|^2$. 
We know that $\gamma^3_{v_1}(T^*)=\gamma^3_{v_2}(T^*)\not=0$, which implies $\Vert \gamma^3_{v_1}(T^*) \Vert=\Vert \gamma^3_{v_2}(T^*) \Vert$. Thus
\begin{eqnarray*}
\frac{\Vert B \Vert}{\sqrt{x_1^2+4B\bar{B}^T}} \Vert 2\sin(\frac{T^*}{2}\sqrt{x_1^2+4B\bar{B}^T}) \Vert =
\frac{\Vert E \Vert}{\sqrt{x_2^2+4E\bar{E}^T}} \Vert 2\sin(\frac{T^*}{2}\sqrt{x_2^2+4E\bar{E}^T}) \Vert,
\end{eqnarray*}
and we get a contradiction as was shown in Case 3 of the proof of Lemma~\ref{cutloci12}.
\end{proof}

\begin{rem}
We observed that the distribution $\mathcal H$ is strongly bracket generating. It may be worth mentioning that $V_{n,1}$ is also a contact manifold, which was studied in~\cite{Go} and also in~\cite{M}. To show that statement, we note that the submersion $U(1) \rightarrow V_{n,1} \rightarrow Gr_{n,1}$ can be written as $S^1 \rightarrow S^{2n-1} \rightarrow \mathbb{C}P^{n-1}$. In~\cite{Go} it is shown that for submertion
$S^{2n-1} \rightarrow \mathbb{C}P^{n-1}$ the vertical vector space is spanned by
$$V(q)=-y_0\partial_{x_0}+x_0\partial_{y_0}-\dotso -y_{n-1}\partial_{x_{n-1}}+x_{n-1}\partial_{y_{n-1}},\quad q\in S^{2n-1}.
$$
The horizontal distribution $D$ is defined as the orthogonal complement to $\spn\{V\}$ in $TS^{2n-1}$ with respect to the Euclidean metric in $\mathbb R^{2n}\cong \mathbb C^n$. At the point $(1,0,\ldots,0)\in S^{2n-1}$ the vertical vector $V=(i,0,\ldots,0)$ coincides with the generator $\xi=\begin{pmatrix} i & 0 \\ 0 & 0 \end{pmatrix}$ of the Lie algebra $\mathfrak u_n(1)$ and the horizontal distribution $D=V^{\bot}$ coincides with the horizontal distribution $\mathcal H=\left\{\begin{pmatrix} 0 & B \\ -\bar{B}^T & 0 \end{pmatrix}\mid\ B\in\mathbb C^{1\times(n-1)}\right\}$ that is orthogonal to $\xi$ with respect to the trace metric. Since metrics, vertical and horizontal distributions are invariant under the action of $U(n)$ we conclude that sub-Riemannian geometries are essentially the same. It is shown in~\cite{Go} that the distribution $D$ coincides with the holomorphic tangent space $HS^{2n-1}$ of $S^{2n-1}$ thought of as an embedded CR-manifold and that it also coincides with the contact distribution given by $\ker (\omega)$ with respect to the contact form 
$$\omega=-y_0dx_0+x_0dy_0-\dotso -y_{n-1}dx_{n-1}+x_{n-1}dy_{n-1}.$$
Thus the contact structure can be transferred to the Stiefel manifold.
\end{rem}


\section{The cut loci of $V_{n,k}$}\label{sec:general}


In the present section we show that some of the properties of the cut locus of $V_{n,1}$ is preserved in the case $V_{n,k}$. In general we were not able to describe the total cut locus, since the distribution is not always strongly bracket generating, that leads to the existence of abnormal minimizers. Abnormal minimizers are also have to be taken into account since they can be minimizers due to~\cite{M94}. The interested reader can find a further information about abnormal minimizers in~\cite{AS,BC,BCK,BT,CM98,CM97,HY,LS}.

The fact that the distribution is in general not strongly bracket generating follows from the following proposition in~\cite{M}.
\begin{prop}
Let $Q$ be an $m$-dimensional manifold and $\mathcal{H}$ an $l$-dimensional strongly bracket generating distribution of codimension $2$ or greater. Then at least one of the following conditions 
$$(1)\ \ l\quad\text{is a multiple of}\quad 4,\qquad (2)\ \ l \geq (m-l)+1.$$
have to be fulfilled. 
\end{prop}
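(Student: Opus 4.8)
The plan is to reduce the statement to a purely linear-algebraic fact about the \emph{curvature} (or Levi) form of the distribution and then extract the parity constraint by an orientation argument. First I would fix a point $q\in Q$, write $V:=\mathcal H_q$ (so $\dim_{\mathbb R}V=l$) and $W:=T_qQ/\mathcal H_q$ (so $\dim_{\mathbb R}W=c:=m-l\ge 2$), and recall that the map $\omega\colon V\times V\to W$, $\omega(X,Y):=[\mathcal X,\mathcal Y]_q\bmod \mathcal H_q$, where $\mathcal X,\mathcal Y$ are any sections of $\mathcal H$ extending $X,Y$, is $C^\infty(Q)$-bilinear and antisymmetric, hence a well-defined tensor. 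With this language the strong bracket generating hypothesis says exactly that for every nonzero $Z\in V$ the linear map $\omega(Z,\cdot)\colon V\to W$ is surjective. Dualizing, for $\lambda$ in the annihilator $W^{*}$ of $\mathcal H_q$ I set $\omega_\lambda(X,Y):=\lambda\bigl(\omega(X,Y)\bigr)$, an antisymmetric $2$-form on $V$, and I would check the equivalence: the distribution is strongly bracket generating (fat) at $q$ if and only if $\omega_\lambda$ is nondegenerate for every nonzero $\lambda\in W^{*}$. If some $\omega_\lambda$ were degenerate, a null vector $Z$ would make $\lambda$ annihilate the image of $\omega(Z,\cdot)$, contradicting surjectivity; the converse is the same computation run backwards.

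Two consequences follow immediately. Since a nondegenerate antisymmetric form exists only on an even-dimensional space, $l$ is even; this is the foundation for condition $(1)$. Moreover, because $\omega(Z,Z)=0$ by antisymmetry, the nonzero vector $Z$ lies in $\ker\omega(Z,\cdot)$, so surjectivity together with the rank--nullity theorem gives $\dim\ker=l-c\ge 1$, i.e.\ $l\ge c+1=(m-l)+1$. This already yields condition $(2)$, so the bulk of the work is to see why the sharper condition $(1)$ is in fact also forced.

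For condition $(1)$ the key idea is an orientation/Pfaffian argument that uses the hypothesis $c\ge 2$ in an essential way. Fixing a volume element on $V$, for a nondegenerate $\omega_\lambda$ the top exterior power $\omega_\lambda^{\,l/2}$ is a nonzero element of the one-dimensional space $\Lambda^{l}V^{*}$, so its sign is defined. The assignment $\lambda\mapsto\omega_\lambda^{\,l/2}$ is continuous and, by the reformulation above, nowhere zero on the sphere $S^{c-1}\subset W^{*}$ of unit covectors. Since $c\ge 2$, this sphere is connected, so the sign is constant along it; applying this to an antipodal pair $\lambda$ and $-\lambda$, and using $\omega_{-\lambda}=-\omega_\lambda$ together with $\omega_{-\lambda}^{\,l/2}=(-1)^{l/2}\omega_\lambda^{\,l/2}$, forces $(-1)^{l/2}=1$, i.e.\ $l\equiv 0\bmod 4$. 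Thus whenever $c\ge 2$ the rank $l$ is actually a multiple of $4$, which is more than enough to conclude that at least one of $(1)$, $(2)$ holds.

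The step I expect to be the main obstacle is the continuity-of-sign argument in the previous paragraph, not because any single estimate is hard but because it is where the hypotheses are genuinely used: one must be careful that the Pfaffian sign is well defined independently of auxiliary choices, that $\lambda\mapsto\omega_\lambda^{\,l/2}$ really takes values in a fixed one-dimensional space, and above all that $c\ge 2$ is exactly the condition making $S^{c-1}$ connected. For $c=1$ the sphere $S^{0}$ is two points, the antipodal comparison breaks down, and indeed contact distributions with $l\not\equiv 0\bmod 4$ exist, so the hypothesis cannot be dropped. Everything else is the routine verification that $\omega$ is tensorial and that the strong bracket generating condition translates faithfully into nondegeneracy of the forms $\omega_\lambda$.
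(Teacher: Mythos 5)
Your proof is correct, and it actually establishes more than the statement asks for: your rank--nullity observation ($Z\in\ker\omega(Z,\cdot)$, so surjectivity forces $l-(m-l)\geq 1$) shows that condition $(2)$ holds for \emph{every} strongly bracket generating distribution, and your Pfaffian argument shows condition $(1)$ also holds once the corank is $\geq 2$; so under the hypotheses both conditions are satisfied simultaneously, making the disjunction in the statement redundant. For comparison: the paper offers no proof at all — the proposition is quoted from Montgomery's book~\cite{M} — and your argument (tensoriality of the curvature form $\omega$, the dual reformulation of fatness as nondegeneracy of $\omega_\lambda$ for every nonzero annihilating covector $\lambda$, and the constancy of the sign of $\omega_\lambda^{\,l/2}$ on the connected sphere $S^{c-1}$ compared at antipodal points) is precisely the standard proof found in that source, going back to Rayner. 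The one point that genuinely needs care — that the paper's definition of strongly bracket generating, phrased via sections, localizes to pointwise surjectivity of $\omega(Z,\cdot)$ for each nonzero $Z\in\mathcal H_q$ — is exactly the tensoriality observation you make, so there is no gap.
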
 
It is obvious that it is not always the case for $V_{n,k}$, where $m=2nk-k^2$ and $l=2nk-2k^2$. Moreover, it is technically hard to write the exact form of normal sub-Riemannian geodesics for an arbitrary $V_{n,k}$.

\begin{prop}
The distribution $\mathcal H$ on $V_{n,k}$ is bracket generating.  
\end{prop}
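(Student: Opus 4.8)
The plan is to exploit the homogeneity of the construction so as to reduce the claim to a single Lie-algebra computation at the identity coset $\Id=[I_n]_{V_{n,k}}$. Since the horizontal distribution $\mathcal{H}$ is invariant under the transitive left action of $U(n)$ on $V_{n,k}=U(n)/U(n-k)$, the flag $\mathcal{H}\subset\mathcal{H}^2\subset\cdots$ has locally constant dimensions, so it suffices to prove $\mathcal{H}^2_{\Id}=T_{\Id}V_{n,k}$. Writing $\mathfrak{u}(n)=\mathfrak{u}(n-k)\oplus\mathfrak{m}$ for the reductive splitting, with $\mathfrak{m}$ identified with $T_{\Id}V_{n,k}$ and $\mathcal{H}_{\Id}$ with the off-diagonal subspace $\mathfrak{h}=\left\{\begin{pmatrix}0&A\\-\bar A^T&0\end{pmatrix}\right\}$, the bracket of the corresponding invariant fields at the origin is computed from the matrix commutator projected onto $\mathfrak{m}$, the $\mathfrak{u}(n-k)$-block being quotiented out exactly as the subspace $\mathcal{R}$ of Section~\ref{VG}.

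Next I would compute, for horizontal generators $\mathcal{A}=\begin{pmatrix}0&A\\-\bar A^T&0\end{pmatrix}$ and $\mathcal{B}=\begin{pmatrix}0&B\\-\bar B^T&0\end{pmatrix}$ with $A,B\in\mathbb{C}^{k\times(n-k)}$, the commutator
\[
[\mathcal{A},\mathcal{B}]=\begin{pmatrix}B\bar A^T-A\bar B^T&0\\0&\bar B^TA-\bar A^TB\end{pmatrix},
\]
exactly as in the strongly-bracket-generating computation for $V_{n,1}$. The lower-right block lies in $\mathfrak{u}(n-k)$ and is discarded, while the upper-left block $B\bar A^T-A\bar B^T$ is skew-Hermitian and represents a vertical vector. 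Hence $\mathcal{H}^2_{\Id}=\mathcal{H}_{\Id}\oplus\spn\{B\bar A^T-A\bar B^T\}$, and the whole argument reduces to the linear-algebra claim that these upper-left blocks span the full vertical space $\mathfrak{u}(k)$.

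This spanning statement is the heart of the matter and the step I expect to require the only genuine work. I would prove it by rank-one choices: taking $A$ and $B$ with a single nonzero column of the same index $p$ (available precisely because $n-k\geq1$) collapses the block to an outer product, so that $B\bar A^T-A\bar B^T=ba^*-ab^*$ with $a,b\in\mathbb{C}^k$. Feeding in $a=e_r,\,b=e_s$ yields $E_{sr}-E_{rs}$; $a=e_r,\,b=ie_s$ yields $i(E_{sr}+E_{rs})$; and $a=e_r,\,b=ie_r$ yields $2iE_{rr}$. These $k^2$ matrices are the standard real basis of $\mathfrak{u}(k)$, so $\mathcal{H}^2_{\Id}=\mathfrak{h}\oplus\mathfrak{u}(k)=\mathfrak{m}=T_{\Id}V_{n,k}$ and $\mathcal{H}$ is bracket generating, in fact of step two, for every $k$ with $1\le k\le n-1$ (the case $k=n$ being degenerate, since then $G_{n,k}$ is a point and $\mathcal{H}=0$); by homogeneity this holds at every point. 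The only delicate point beyond the bookkeeping is the identity asserting that the bracket of the invariant horizontal fields is computed by the $\mathfrak{m}$-projected matrix commutator (signs being irrelevant for the span); this is the standard reductive-homogeneous-space formula, and it can be bypassed altogether by working with the explicitly $\operatorname{Ad}(U(n-k))$-invariant generators used in the $V_{2,1}$ and $V_{n,1}$ computations.
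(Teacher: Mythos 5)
Your proof is correct and follows essentially the same route as the paper's: both compute the step-two commutator of off-diagonal horizontal matrices and then use rank-one choices of $A,B\in\mathbb{C}^{k\times(n-k)}$ (supported in a single column, hence requiring only $n-k\geq 1$) to show the resulting skew-Hermitian upper-left blocks span the vertical space $\mathfrak{u}(k)$. Your version is slightly more explicit in producing the standard real basis $E_{sr}-E_{rs}$, $i(E_{sr}+E_{rs})$, $2iE_{rr}$ and in justifying the reduction to the identity coset, but the substance is identical.
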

\begin{proof}
First we note that the commutator $[\mathcal H \,, \mathcal H]$ is given by 
\begin{eqnarray*}
\left[ \begin{pmatrix} 0 & B \\ -\bar{B}^T & 0 \end{pmatrix} \,, \begin{pmatrix} 0 & C \\ -\bar{C}^T & 0 \end{pmatrix} \right] &=& \begin{pmatrix} -B\bar{C}^T + C\bar{B}^T & 0 \\ 0 & -\bar{C}^TB + \bar{B}^TC \end{pmatrix}.
\end{eqnarray*}
It is sufficient if we show that for every upper triangular $(k\times k)$-matrix $D_{lm}$, $m>l$ with an entry $d_{lm}\neq 0$ on the intersection of $l$-row and $m$-column and all other entries vanish we can find $B,C \in \mathbb{C}^{k \times (n-k)}$ such that $D_{lm}=-B\bar{C}^T$. For instance, if we choose
$$B=\big(b_{\alpha \beta}\big)=\quad\text{by}\quad b_{\alpha \beta}= \begin{cases} d_{lm} & \text{for } \alpha=l, \beta=\min\{m , n-k\} \\ 0 & \text{otherwise}, \end{cases}
$$
and
$$-C^{T}=(c_{\alpha \beta})\quad\text{by}\quad c_{\alpha \beta}= \begin{cases} 1 & \text{for } \alpha=\min\{m , n-k\}, \beta=m \\ 0 & \text{otherwise}, \end{cases}
$$
then we deduce that $D_{lm}=-B\bar{C}^T$. 

We also need to construct diagonal-form $(k\times k)$-matrices $D_j$ with $i\in\mathbb C$ on the intersection of $j$-row and $j$-column and all other entries vanish and show that there are $B,C \in \mathbb{C}^{k \times (n-k)}$ such that $D_{j}=-B\bar{C}^T$. In this case we choose
$$B=(b_{\alpha \beta})\quad\text{by}\quad b_{\alpha \beta}= \begin{cases} i & \text{for } \alpha=j, \beta=\min\{j , n-k\} \\ 0 & \text{otherwise}, \end{cases}
$$ 
and
$$-\bar{C}^T=(c_{\alpha \beta})\quad\text{by}\quad c_{\alpha \beta}= \begin{cases} 1 & \text{for } \alpha=\min\{j , n-k\}, \beta=j \\ 0 & \text{otherwise}. \end{cases}$$ 
Then we obtain that $D_{j}=-B\bar{C}^T$. 
It implies that $\mathcal H$ is bracket generating of step $2$.
\end{proof}

\begin{prop}\label{length}
Suppose $\gamma_v(t)$ is a normal sub-Riemannian geodesic, which connects the identity $\Id$ with the point $q\in V_{n,k}$, $q \neq Id$, at the time
$T>0$, where $v= \begin{pmatrix} A & B \\ - \bar{B}^T & 0 \end{pmatrix} \in \mathfrak{u}(n)$.
The length of $\gamma_v$ is given by
$l(\gamma_v , T)=2T\sqrt{n \tr ( B\bar{B}^T)}$.
\end{prop}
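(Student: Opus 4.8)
The plan is to show that $\gamma_v$ travels at constant speed $2\sqrt{n\tr(B\bar{B}^T)}$ and then integrate, mimicking the computation already performed for $V_{2,1}$ in the proof of Lemma~\ref{cutloci12}. The decisive point is to rewrite the velocity $\dot\gamma_v(t)$ in horizontal form and then to exploit the invariance of the trace under conjugation.

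First I would pick the representative in $U(n)$ of the geodesic~\eqref{sRgeodesic} starting at $\Id$, namely $p(t)=\exp_{U(n)}(tv)E(t)$ with $E(t)=\begin{pmatrix} e^{-tA} & 0 \\ 0 & I_{n-k}\end{pmatrix}$, so that $\gamma_v(t)=\pi_1(p(t))$ and the vertical part of $v$ is $A(v)=\begin{pmatrix} A & 0 \\ 0 & 0\end{pmatrix}$. Differentiating $p$ by the product rule gives $p'(t)=\exp_{U(n)}(tv)\bigl(vE(t)+E'(t)\bigr)$, and the two terms carrying $A$ in the upper-left block cancel, leaving
$$p'(t)=p(t)\begin{pmatrix} 0 & e^{tA}B \\ -\bar{B}^T e^{-tA} & 0\end{pmatrix}.$$
Writing $\tilde B=\tilde B(t)=e^{tA}B$ and using $\bar{A}^T=-A$, so that $\overline{\tilde B}^T=\bar{B}^T e^{-tA}$, one sees that $W(t):=\gamma_v(t)^{-1}\dot\gamma_v(t)=\begin{pmatrix} 0 & \tilde B \\ -\overline{\tilde B}^T & 0\end{pmatrix}$ is horizontal. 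This cancellation is the heart of the argument: it is exactly what forces $\dot\gamma_v$ to lie in $\mathcal H$, and it is where the connection factor $\exp_{U(k)}(-tA(v))$ does its work.

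It then remains to compute the speed. Using the metric identity $\langle\dot\gamma_v,\dot\gamma_v\rangle_{V_{n,k}}=-2n\tr\bigl(W(t)^2\bigr)$ together with
$$W(t)^2=\begin{pmatrix} -\tilde B\,\overline{\tilde B}^T & 0 \\ 0 & -\overline{\tilde B}^T\tilde B\end{pmatrix},$$
I obtain $\tr\bigl(W(t)^2\bigr)=-2\tr(\tilde B\,\overline{\tilde B}^T)=-2\tr\bigl(e^{tA}B\bar{B}^T e^{-tA}\bigr)$, and the cyclic invariance of the trace reduces this to $-2\tr(B\bar{B}^T)$, independently of $t$. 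Hence $\langle\dot\gamma_v,\dot\gamma_v\rangle_{V_{n,k}}=4n\tr(B\bar{B}^T)$ is constant, so $\lVert\dot\gamma_v(t)\rVert=2\sqrt{n\tr(B\bar{B}^T)}$, and integrating over $[0,T]$ yields $l(\gamma_v,T)=2T\sqrt{n\tr(B\bar{B}^T)}$.

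The only genuinely delicate step is the differentiation leading to the horizontal form of $\dot\gamma_v$: one must track the connection factor carefully and verify the cancellation in the upper-left block. After that the argument is routine, relying only on squaring a block skew-Hermitian matrix and on the cyclicity of the trace; in particular the conjugation by $e^{tA}$ is harmless precisely because the trace is invariant under it, which is what makes the speed constant in $t$.
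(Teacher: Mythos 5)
Your proof is correct and follows essentially the same route as the paper: both differentiate the representative $p(t)=\exp_{U(n)}(tv)\exp(-tA(v))$, observe that the vertical $A$-blocks cancel so that $\gamma_v^{-1}\dot\gamma_v=\begin{pmatrix} 0 & e^{tA}B \\ -\bar{B}^T e^{-tA} & 0\end{pmatrix}$ is horizontal, and then use cyclicity of the trace to get the constant speed $2\sqrt{n\tr(B\bar{B}^T)}$ before integrating. No gaps; the computation of the squared velocity and the final length agree with the paper's.
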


\begin{proof}
First of all we calculate the velocity vector of $\gamma_v(t)$ at the time $t$, which is $\dot{\gamma}_v(t)=\gamma_v(t)
v_D$ for $v_D \in \mathfrak{u}(n)$. We omit the subscript $U(n)$ or $U(k)$ from $\exp_{(\cdot)}$, since it is clear which one we use from the context. By the chain rule we get that 
\begin{eqnarray*}
\dot{\gamma}_v(t) 
&=& 
d_{p(t)}\pi_1 
\bigg[\left(\exp \left\{t \begin{pmatrix} A & B \\ -\bar{B}^T & 0 \end{pmatrix} \right\} \right) 
\begin{pmatrix} A & B \\ -\bar{B}^T & 0 \end{pmatrix} \left(\exp \left\{t
\begin{pmatrix} -A & 0 \\ 0 & 0 \end{pmatrix} \right\} \right) 
\\
&+& 
\left(\exp \left\{t \begin{pmatrix} A & B \\ -\bar{B}^T & 0 \end{pmatrix} \right\} \right) \left(\exp \left\{t\begin{pmatrix} -A & 0 \\ 0 & 0 \end{pmatrix}  \right\} \right) \begin{pmatrix} -A & 0 \\ 0 & 0
\end{pmatrix}
\bigg],
\end{eqnarray*}
where $p(t):= \exp \left(t \begin{pmatrix} A & B \\ -\bar{B}^T & 0 \end{pmatrix} \right) \exp \left(t\begin{pmatrix} -A & 0 \\ 0 & 0 \end{pmatrix} \right)$.
We note that
\begin{eqnarray*}
&\begin{pmatrix} A & B \\ -\bar{B}^T & 0 \end{pmatrix} 
\exp \left\{t \begin{pmatrix} -A & 0 \\ 0 & 0 \end{pmatrix} \right\} 
= 
\begin{pmatrix} A \exp (-tA) & B
\\ -\bar{B}^T\exp (-tA) & 0 
\end{pmatrix} 
\\
&= 
\exp \left\{t \begin{pmatrix} -A & 0 \\ 0 & 0 \end{pmatrix} \right\} 
\begin{pmatrix} \exp (tA) A \exp (-tA) & \exp (tA) B \\ - \bar{B}^T \exp (-tA) & 0 \end{pmatrix}
\\
&= 
\exp \left\{t \begin{pmatrix} -A & 0 \\ 0 & 0 \end{pmatrix} \right\}  
\begin{pmatrix}  A  & \exp (tA) B \\ - \bar{B}^T \exp (-tA) & 0 
\end{pmatrix}.
\end{eqnarray*}
Thus 
\begin{eqnarray*}
\dot{\gamma}_v(t) 
&=& 
d_{p(t)}\pi_1 \bigg[ 
\exp \left\{t \begin{pmatrix} A & B \\ -\bar{B}^T & 0 \end{pmatrix} \right\} 
\exp \left\{t\begin{pmatrix} -A & 0 \\ 0 & 0 \end{pmatrix} \right\} 
\\
&\times&
\left(
\begin{pmatrix} A & \exp (tA) B \\ - \bar{B}^T \exp (-tA) & 0 \end{pmatrix} + \begin{pmatrix} -A & 0 \\ 0 & 0 \end{pmatrix} 
\right)
\bigg] 
\\
&=& \gamma_v(t) \begin{pmatrix}  0  & \exp (tA) B \\ - \bar{B}^T \exp (-tA) & 0 \end{pmatrix} 
\end{eqnarray*}
and 
$$
v_D= \begin{pmatrix}  0  & \exp (tA) B \\ - \bar{B}^T \exp (-tA) & 0 \end{pmatrix}.
$$
It follows that
\begin{eqnarray*}
g(\dot{\gamma}_v(t) , \dot{\gamma}_v(t) ) &= & -2n \tr (v_D^2) 
\\
&=& -2n \tr \left( \begin{pmatrix} -\exp (tA) B\bar{B}^T \exp (-tA)  & 0 \\ 0 & -\bar{B}^TB \end{pmatrix} \right) 
\\
&=& -2n\Big( -\tr \big(\exp (tA) B\bar{B}^T \exp (-tA)\big) - \tr (\bar{B}^TB ) \Big) 
\\
&=& 4n \tr ( B\bar{B}^T ).
\end{eqnarray*}
In the last equation we used $\tr (XY)=\tr(YX)$ and $\tr(-X)=-\tr (X)$.

We conclude that the length of $\gamma_v$ does not depend on $A$, but depend on final time $T$ and the trace of the matrix $B \bar{B}^T$.
\end{proof}

\begin{theorem}\label{cutlocikn}
The set 
$$L=\left\{\left[\begin{pmatrix} C & 0 \\ 0 & D \end{pmatrix}\right]_{V_{n,k}} \Big\vert\ C \in U(k),\ D \in U(n-k) \}\setminus \left[\Id\right]_{V_{n,k}}\right\}
$$ 
is a subset of the cut locus $K_{\Id}$ on $V_{n,k}$.
\end{theorem}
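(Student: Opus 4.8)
The first thing I would notice is that $L$ is, up to the deleted point $\Id$, the entire fibre of $\pi$ through $\Id$. Since the block $D\in U(n-k)$ is absorbed by the equivalence $\backsim_1$, every element of $L$ may be written as $\left[\begin{pmatrix} C & 0 \\ 0 & I_{n-k}\end{pmatrix}\right]_{V_{n,k}}$ with $C\in U(k)$ and $C\neq I_k$, so that $L=\pi^{-1}(\pi(\Id))\setminus\{\Id\}$. To prove $L\subset K_{\Id}$ I must produce, for each such $q$, two distinct minimizing geodesics from $\Id$ to $q$. The engine is a symmetry that fixes $\Id$ and $q$ while moving geodesics between them.

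That symmetry is left translation by the subgroup $U_n(n-k)$. For $R\in U(n-k)$ put $P=\begin{pmatrix} I_k & 0 \\ 0 & R\end{pmatrix}$ and let it act by $P\cdot[g]_{V_{n,k}}=[Pg]_{V_{n,k}}$. First, left translation by any element of $U(n)$ is a sub-Riemannian isometry of $V_{n,k}$: it carries $\mathcal{H}_{[g]}$ to $\mathcal{H}_{[Pg]}$ and leaves the metric invariant, because the latter is read off the Lie-algebra representative as $-2n\tr(\mathcal X\mathcal Y)$, which left translation does not change. Second, $P$ fixes $\Id$ and every point of $L$, since $P\cdot\left[\begin{pmatrix} C & 0 \\ 0 & I_{n-k}\end{pmatrix}\right]_{V_{n,k}}=\left[\begin{pmatrix} C & 0 \\ 0 & R\end{pmatrix}\right]_{V_{n,k}}=\left[\begin{pmatrix} C & 0 \\ 0 & I_{n-k}\end{pmatrix}\right]_{V_{n,k}}$. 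Consequently, if $\gamma$ is any minimizing geodesic from $\Id$ to $q$ --- and one exists by compactness of $V_{n,k}$ together with the Chow--Rashevskii theorem --- then $P\cdot\gamma$ is again a minimizing geodesic from $\Id$ to $q$.

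In the normal case this becomes entirely explicit and connects to Proposition~\ref{length}. If $\gamma=\gamma_v$ with $v=\begin{pmatrix} A & B \\ -\bar B^T & 0\end{pmatrix}$, then conjugation gives $PvP^{-1}=\begin{pmatrix} A & B\bar R^T \\ -R\bar B^T & 0\end{pmatrix}=:v_R$ and $P\cdot\gamma_v=\gamma_{v_R}$, so the endpoint is preserved and, by Proposition~\ref{length}, $l(\gamma_{v_R})=2T\sqrt{n\tr(B\bar R^T R\bar B^T)}=2T\sqrt{n\tr(B\bar B^T)}=l(\gamma_v)$, confirming minimality of the new geodesic directly. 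The two geodesics are distinct precisely when $v_R\neq v$, i.e. $B\bar R^T\neq B$, which can be arranged whenever $B\neq 0$ and $n-k\geq 1$; and $B\neq 0$ is forced because a purely vertical geodesic never leaves $\Id$, whereas $q\neq\Id$. Conceptually, the fixed-point set of the $U_n(n-k)$-action is exactly the fibre $L\cup\{\Id\}$, whose tangent space is the vertical $\mathcal{V}$; a horizontal minimizer has velocity in $\mathcal{H}$ and $\mathcal{H}\cap\mathcal{V}=\{0\}$, so a nonconstant minimizer cannot stay inside the fibre and is therefore moved by some $P$.

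The step I expect to be the main obstacle is the interface with abnormal minimizers. Existence of a minimizer is free, but for general $V_{n,k}$ the distribution need not be strongly bracket generating, so I cannot assume the chosen minimizer is a normal geodesic $\gamma_v$ nor invoke Proposition~\ref{length} for it. The robust route is to run the isometry argument on an \emph{arbitrary} minimizer, reducing the whole proof to the single geometric assertion that a horizontal minimizer leaves the fibre; the explicit computation with $v_R$ then serves only as a transparent check in the normal case. A secondary care point is matching the definition of $K_{\Id}$, which is phrased through distinct initial velocities $v_1\neq v_2$: one verifies that $(dP)_{\Id}$ moves $\dot\gamma(0)$, which again follows from $\dot\gamma(0)\in\mathcal{H}_{\Id}\setminus\{0\}$ being rotated nontrivially by $P$.
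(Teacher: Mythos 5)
Your proposal is correct, and it reaches the conclusion of Theorem~\ref{cutlocikn} by a genuinely different mechanism than the paper, even though both proofs ultimately exhibit the same family of competing minimizers: your conjugated velocity $v_R=\begin{pmatrix} A & B\bar R^T\\ -R\bar B^T & 0\end{pmatrix}$ coincides, for $R=-I_{n-k}$, with the paper's $v^*=\begin{pmatrix} A & -B\\ \bar B^T & 0\end{pmatrix}$, and the paper's closing remark about $\hat v$ with block $-BU$, $U\in U(n-k)$, is exactly your $U_n(n-k)$-orbit. The difference lies in the justification. The paper takes a minimizer assumed to be a \emph{normal} geodesic $\gamma_v$ and computes: from the recursions $v_1(n)=v_1(n-1)A-v_1(n-2)B\bar B^T$ and $v_3(n)=-\bar B^T v_1(n-1)$ it reads off that the first block column of $\gamma_v(T)$ depends on $B$ only through $B\bar B^T$ and the linear prefactor $\bar B^T$, so replacing $B$ by $-B$ preserves the endpoint once the off-diagonal blocks vanish there, while Proposition~\ref{length} gives equality of lengths. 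You instead observe that $L\cup\{\Id\}$ is the fibre $\pi^{-1}(\pi(\Id))$, that left translation by $P=\begin{pmatrix} I_k & 0\\ 0& R\end{pmatrix}$ is a sub-Riemannian isometry fixing that fibre pointwise, and that a nonconstant horizontal minimizer cannot stay in the fibre since the fibre's tangent space is vertical; this is coordinate-free and shorter. Moreover, your route repairs a weakness of the paper's argument: since the distribution on $V_{n,k}$ is in general not strongly bracket generating (as the paper itself stresses at the start of Section~\ref{sec:general}), one cannot assume a priori that the minimizer reaching $[g]\in L$ is normal, whereas your isometry argument applies verbatim to an arbitrary horizontal minimizer, abnormal or not. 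What the paper's computation buys in exchange is explicit control of the geodesic components, which it reuses elsewhere (e.g.\ in the proof of Theorem~\ref{th:Vn1}). One residual technicality, inherited from Definition~\ref{def:cut_locus} rather than created by you: for a possibly abnormal minimizer the initial velocity $\dot\gamma(0)$ need not exist pointwise, so the requirement ``$v_1\neq v_2$'' should be certified either as you do in the normal case, or by choosing $R$ that moves some point $\gamma(t_0)$ lying off the fibre, which at least makes the two minimizing curves distinct.
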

\begin{proof}
Suppose the point $[g]_{V_{n,k}} = \left[\begin{pmatrix} C & 0 \\ 0 & D \end{pmatrix}\right]_{V_{n,k}} \in L$. This point is reached optimally by a geodesic $\gamma_v=\left[\begin{pmatrix}
\gamma_v^1 & \gamma_v^2 \\ \gamma_v^3 & \gamma_v^4 \end{pmatrix}\right]_{V_{n,k}}$ at some $T$ from the initial point $\Id \in V_{n,k}$ and the initial velocity vector $v=
\begin{pmatrix} A & B \\ -\bar{B}^T & 0 \end{pmatrix} \in \mathfrak{u}(n)$. Let us see how $\gamma_v^j$, $j=1,2,3,4$, depend on $A$ and $B$. We recall that
\begin{eqnarray*}
\gamma_v(t)
&=&\pi_1\left(\exp \left\{t \begin{pmatrix} A & B \\ -\bar{B}^T & 0 \end{pmatrix} \right\} 
\exp \left\{t \begin{pmatrix} -A & 0 \\ 0 & 0 \end{pmatrix} \right\}\right) 
\\ 
&=& \left[\begin{pmatrix}
\gamma_v^1(t) & \gamma_v^2(t) \\ \gamma_v^3(t) & \gamma_v^4(t) \end{pmatrix}\right]_{V_{n,k}}.
\end{eqnarray*}
We start from calculating $\exp \left(t \begin{pmatrix} A & B \\ -\bar{B}^T & 0 \end{pmatrix} \right)=\begin{pmatrix} v_1(t) & v_2(t) \\ v_3(t) & v_4(t) \end{pmatrix} $. Using the notation $\begin{pmatrix} A & B \\ -\bar{B}^T & 0 \end{pmatrix}^n:=
\begin{pmatrix} v_1(n) & v_2(n) \\ v_3(n) & v_4(n) \end{pmatrix}$, we find that $v_1(n)=v_1(n-1)A-v_1(n-2)B\bar{B}^T$, $n\geq 2$, for initial values
$v_1(0)=Id$ and $v_1(1)=A$. This implies that $v_1$ as function of $t$ depends on $A$ and $B\bar{B}^T$. 
Furthermore, we get the formulas $v_2(n)=v_1(n-1)B$, $v_3(n)=-\bar{B}^Tv_1(n-1)$ and $v_4(n)=-\bar{B}^Tv_1(n-2)B$.  

Now we claim that the geodesic $\gamma_{v^*}$ with $v^*:= \begin{pmatrix} A & -B \\ \bar{B}^T & 0 \end{pmatrix}$ is also  minimizing from $\Id$ to $[g]_{V_{n,k}}$ such that
with $\gamma_{v^*}(T)=[g]_{V_{n,k}}$. Indeed, 
since $(-B)(-\bar{B}^T)=B\bar{B}^T$ and $(-\bar{B}^T)(-B)=\bar{B}^TB$ the length of both geodesics coincides. It remains to show that
$\gamma_{v^*}(T)=[g]_{V_{n,k}}$. Observe, that the value $v_1(t)$ depends on $A$, $B\bar{B}^T$ and $t$, and therefore $\gamma_{v^*}^1(T)=\gamma_v^1(T)$. 
Finally $\gamma_v^2(T)=\gamma_v^3(T)=0$ implies $\gamma_{v^*}^2(T)=-\gamma_v^2(T)=0=\gamma_v^2(T)$ and
$\gamma_{v^*}^3(T)=-\gamma_v^3(T)=0=\gamma_v^3(T)$. We conclude that $\gamma_{v^*}(T)=\gamma_v(T)$ and that $L\subset K_{\Id}$.

The geodesic $\gamma_{v^*}$ can be replaced by $\gamma_{\hat v}$ with $\hat v= \begin{pmatrix} A & -BU \\ (\overline{BU})^T & 0 \end{pmatrix}$ for all $U \in U(n-k)$. It is also a minimizing geodesic from $\Id$ to $[g]_{V_{n,k}}$,
with $\gamma_{\hat v}(T)=[g]_{V_{n,k}}$.
\end{proof}


\begin{subsection}{Points that are not in the cut locus of $V_{2k,k}$}


Since the description of the cut locus for general Stiefel manifolds is very complicated we focus on the Stiefel manifolds $V_{n,k}$  with $n=2k$ and study points which never can belong to cut locus. The main result of this section is the following.

\begin{prop}\label{2kk}
All the points of the form $\left[\begin{pmatrix} 0 & D \\ C & 0 \end{pmatrix}\right]_{V_{2k,k}}$ with $C,D \in U(k)$ are not in the normal cut locus of $V_{2k,k}$.
\end{prop}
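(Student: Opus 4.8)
The plan is to prove that $q:=\left[\begin{pmatrix} 0 & D \\ C & 0\end{pmatrix}\right]_{V_{2k,k}}$ is reached by exactly one minimizing normal geodesic; since the class of $q$ in $V_{2k,k}$ depends only on its first $k$ columns $\begin{pmatrix} 0 \\ C\end{pmatrix}$, this is precisely the statement. First I would reduce everything to a single minimization. By the scaling relation $\gamma_{(A,B)}(T)=\gamma_{(TA,TB)}(1)$ coming from~\eqref{sRgeodesic}, it suffices to fix $T=1$ and minimize $\tr(B\bar B^T)$, which by Proposition~\ref{length} governs the length, over all $v=\begin{pmatrix} A & B \\ -\bar B^T & 0\end{pmatrix}\in\mathfrak u(2k)$ with $\gamma_v(1)=q$. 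Denoting the blocks of $\exp(v)$ by $v_j(1)$ and using that the right factor $\diag(e^{-A},I_k)$ multiplies the first block-column on the right by the invertible $e^{-A}$, the endpoint condition becomes $v_1(1)=0$ and $v_3(1)=Ce^{A}$.

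The main obstacle is that, by Proposition~\ref{length}, the length is independent of the vertical part $A$, so a priori a whole family of geodesics with different $A$ might reach $q$ with equal length; excluding this is the heart of the matter. I would handle it through the Riemannian submersion $\pi\colon V_{2k,k}\to G_{2k,k}$. The images $o:=\pi(\Id)$ and $\pi(q)$ are the top and bottom $k$-planes, so all principal angles equal $\pi/2$ and a direct computation gives $d_G(o,\pi(q))=\pi k\sqrt2$. Because $\pi$ sends horizontal curves to base curves of the same length, every horizontal curve from $\Id$ to $q$ is at least this long, while the explicit geodesic with $A=0$ and $B=-\tfrac\pi2\bar C^T$ reaches $q$ with length exactly $\pi k\sqrt2$; hence $d(\Id,q)=\pi k\sqrt2=d_G$. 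Consequently the projection of any minimizing normal geodesic is a constant-speed curve of length $d_G$ from $o$ to $\pi(q)$, and therefore a minimizing geodesic of $G_{2k,k}$.

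The key step is to read off what this forces on $v$. Decomposing $v=v_{\mathfrak k}+v_{\mathfrak m}$ with $v_{\mathfrak k}=\diag(A,0)$ and $v_{\mathfrak m}=\begin{pmatrix} 0 & B \\ -\bar B^T & 0\end{pmatrix}$, one has $\pi\circ\gamma_v(t)=\exp(tv)\cdot o$ in the symmetric space $G_{2k,k}=U(2k)/(U(k)\times U(k))$, and such a curve is a geodesic precisely when $[v_{\mathfrak k},v_{\mathfrak m}]=0$. Since
\[
[v_{\mathfrak k},v_{\mathfrak m}]=\begin{pmatrix} 0 & AB \\ \bar B^T A & 0\end{pmatrix},
\]
minimality forces $AB=0$. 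Verifying this symmetric-space criterion — that $[v_{\mathfrak k},v_{\mathfrak m}]=0$ is exactly the obstruction to $\exp(tv)\cdot o$ being a geodesic — is the principal technical point I expect to spend effort on.

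Finally I would conclude. Once $AB=0$ the parts $v_{\mathfrak k}$ and $v_{\mathfrak m}$ commute, so $\exp(v)=\diag(e^{A},I_k)\exp(v_{\mathfrak m})$ and the top-left block equals $e^{A}\cos\sqrt{B\bar B^T}$; the requirement $v_1(1)=0$ then reduces to $\cos\sqrt{B\bar B^T}=0$, i.e. every singular value of $B$ is an odd multiple of $\pi/2$. Minimality of $\tr(B\bar B^T)=\sum_j\sigma_j^2$ forces all $\sigma_j=\tfrac\pi2$, hence $B\bar B^T=\tfrac{\pi^2}4 I_k$ and $B=\tfrac\pi2 W$ for some $W\in U(k)$. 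As $B$ is now invertible, $AB=0$ yields $A=0$; and with $A=0$ a short computation gives $v_3(1)=-\bar W^T$, so $v_3(1)=Ce^{A}=C$ determines $W=-\bar C^T$ and thus $B=-\tfrac\pi2\bar C^T$ uniquely. Therefore exactly one minimizing normal geodesic joins $\Id$ to $q$, and $q$ does not lie in the normal cut locus of $V_{2k,k}$, as claimed.
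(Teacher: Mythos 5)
Your proof is correct, and it shares the paper's overall skeleton, but it diverges at the decisive step in a way worth recording. Both arguments (i) establish $d_{V_{2k,k}}(\Id,q)=d_{G_{2k,k}}(o,\pi(q))$, $o:=\pi(\Id)$, by exhibiting the explicit horizontal geodesic with $A=0$, $B=-\tfrac{\pi}{2}\bar C^T$; (ii) deduce that any minimizing normal geodesic to $q$ projects to a minimizing geodesic of the Grassmannian; (iii) classify the latter --- your singular-value analysis ($\cos\sqrt{B\bar B^T}=0$ together with minimality of $\tr(B\bar B^T)$ forces every singular value to equal $\pi/2$) is exactly the content of Lemma~\ref{grgeo}, in the normalization $T=1$ rather than $\tr(B\bar B^T)=1$. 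The genuine difference is step (iv), ruling out $A\neq0$. The paper invokes uniqueness of horizontal lifts: the geodesic with $A\neq 0$ would be the lift through $\Id$ of a curve of the form \eqref{eq:geodGr}, that lift equals $[\exp(tv_{\mathfrak m})]_{V_{2k,k}}$, and the paper then asserts \emph{without proof} that this curve is ``different from'' the given one. That assertion is delicate: when $AB=0$ the normal geodesics with data $(A,B)$ and $(0,B)$ coincide as curves, so the claim is true only because $B$ is invertible. Your symmetric-space criterion --- $\exp(tv)\cdot o$ is a base geodesic iff $[v_{\mathfrak k},v_{\mathfrak m}]=0$, i.e. $AB=0$ --- isolates exactly this obstruction and then uses the invertibility of $B$ explicitly to kill $A$; so your route proves precisely the point the paper glosses over, and moreover it yields uniqueness of the minimizer directly, dispensing with the paper's separate corollary that distinct $B$'s reach distinct points of the fibre. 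The criterion itself, which you rightly flag as the main technical burden, is true and standard: if $\exp(tv)\cdot o$ is a geodesic, it must agree with $\exp(tv_{\mathfrak m})\cdot o$ (same initial velocity), hence $k(t)=\exp(-tv_{\mathfrak m})\exp(tv)$ stays in the isotropy group, hence $\mathrm{Ad}_{\exp(-tv_{\mathfrak m})}(v_{\mathfrak k})\in\mathfrak k$ for all $t$; differentiating at $t=0$ and using $[\mathfrak m,\mathfrak k]\subset\mathfrak m$ gives $[v_{\mathfrak k},v_{\mathfrak m}]\in\mathfrak k\cap\mathfrak m=\{0\}$.
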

We start the proof of Proposition~\ref{2kk} from the following lemma.
\begin{lemma}\label{grgeo}
The points $\left [\begin{pmatrix} 0 & D \\ C & 0 \end{pmatrix}\right]_{G_{2k,k}}\in Gr_{2k,k}$ is reached by geodesics starting from $\left [\begin{pmatrix} Id_k & 0 \\ 0 & Id_k \end{pmatrix}\right]_{G_{2k,k}}$ only if the initial velocity vector $v$ has the form $v=\begin{pmatrix} 0 & B \\ -\bar{B}^T & 0 \end{pmatrix}$ with $B \in U(k)$. If we assume that $\tr(B\bar{B}^T)=1$ the condition $B \in U(k)$ is changed to $\sqrt{k}B \in U(k)$.
\end{lemma}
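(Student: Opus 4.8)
The plan is to make the Grassmann geodesics issuing from the base point completely explicit and then read off the constraint on $B$ from the requirement that such a geodesic reach the prescribed point with minimal length. Since $G_{2k,k}\cong U(2k)/(U(k)\times U(k))$ carries the metric induced from the bi-invariant metric on $U(2k)$, the Riemannian geodesics through $\big[\diag(Id_k,Id_k)\big]_{G_{2k,k}}$ are exactly the projections $t\mapsto\pi(\exp_{U(2k)}(tv))$ of one-parameter subgroups whose generator $v$ is orthogonal to $\mathfrak u(k)\oplus\mathfrak u(k)$, and by the description of $T_{[m]}G_{2k,k}$ this already forces $v=\begin{pmatrix} 0 & B \\ -\bar B^{T} & 0\end{pmatrix}$ with $B\in\mathbb C^{k\times k}$, accounting for the block form in the statement. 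First I would diagonalize $B$ by a singular value decomposition $B=U\Sigma\bar W^{T}$, with $U,W\in U(k)$ and $\Sigma=\diag(\sigma_1,\dots,\sigma_k)$, $\sigma_i\ge 0$, and factor $v=P\,\omega\,\bar P^{T}$ with $P=\diag(U,W)$ and $\omega=\begin{pmatrix} 0 & \Sigma \\ -\Sigma & 0\end{pmatrix}$. Because $\Sigma$ is diagonal, $\omega$ splits into planar rotation blocks and one computes
\[
\exp_{U(2k)}(tv)=\begin{pmatrix} U\cos(t\Sigma)\bar U^{T} & U\sin(t\Sigma)\bar W^{T} \\ -W\sin(t\Sigma)\bar U^{T} & W\cos(t\Sigma)\bar W^{T}\end{pmatrix},
\]
where $\cos(t\Sigma)$ and $\sin(t\Sigma)$ are the diagonal matrices with entries $\cos(t\sigma_i)$ and $\sin(t\sigma_i)$.

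Next I would project to $G_{2k,k}$. Under $\backsim_2$ the image of $\exp_{U(2k)}(tv)$ is the $k$-plane spanned by its first $k$ columns, that is the column space of $\begin{pmatrix} U\cos(t\Sigma)\bar U^{T} \\ -W\sin(t\Sigma)\bar U^{T}\end{pmatrix}$. Since $C\in U(k)$ is invertible, the target $\big[\begin{pmatrix} 0 & D \\ C & 0\end{pmatrix}\big]_{G_{2k,k}}$ is precisely the ``lower'' $k$-plane $\{0\}\times\mathbb C^{k}$, independently of $C$ and $D$. Hence the geodesic reaches the target at time $t$ if and only if the upper block vanishes while the lower block stays invertible; the first condition is $U\cos(t\Sigma)\bar U^{T}=0$, i.e. $\cos(t\sigma_i)=0$, equivalently $t\sigma_i\in\frac{\pi}{2}+\pi\mathbb Z$ for every $i$, and the invertibility of the lower block is then automatic because $\cos(t\sigma_i)=0$ forces $\sin(t\sigma_i)=\pm1$. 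In particular every $\sigma_i$ is strictly positive.

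Finally I would impose minimality. By the length formula of Proposition~\ref{length} (applied with $A=0$, $n=2k$) the length up to time $t$ equals $2t\sqrt{2k\,\tr(B\bar B^{T})}=2\sqrt{2k}\,\big(\sum_i(t\sigma_i)^2\big)^{1/2}$, so minimizing the length among all geodesics reaching the target amounts to minimizing $\sum_i(t\sigma_i)^2$ subject to $t\sigma_i\in\frac{\pi}{2}+\pi\mathbb Z$. The minimum is attained exactly when $t\sigma_i=\frac{\pi}{2}$ for all $i$, which forces the singular values to coincide; thus $\Sigma=\sigma\,Id_k$ and $B=\sigma\,U\bar W^{T}$ is a scalar multiple of a unitary matrix. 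Taking $\sigma=1$ gives $B\in U(k)$, while the normalization $\tr(B\bar B^{T})=\sum_i\sigma_i^2=1$ gives $\sigma=1/\sqrt k$ and hence $\sqrt k\,B\in U(k)$, as claimed. The main obstacle is precisely this last step, and it is more than a formality: the bare reachability condition $\cos(t\sigma_i)=0$ does \emph{not} force the $\sigma_i$ to be equal (one may take distinct odd multiples of $\pi/2$), so the conclusion genuinely rests on selecting the \emph{shortest} geodesics. Conceptually, the target is the orthogonal complement of the base plane, all of whose principal angles with it equal $\pi/2$, and any nonuniform rotation of the directions strictly lengthens the path; pinning this down cleanly is the crux of the argument.
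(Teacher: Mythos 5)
Your proposal is correct and takes essentially the same route as the paper's own proof: the paper likewise writes the Grassmann geodesic explicitly (as $\gamma_v^1(t)=\cos(t\sqrt{B\bar B^T})$, $\gamma_v^3(t)=-\bar B^T\sin(t\sqrt{B\bar B^T})(\sqrt{B\bar B^T})^{-1}$), reduces reachability to the simultaneous vanishing of the cosines $\cos(T_0\sqrt{d_i})=0$ over the eigenvalues $d_i$ of $B\bar B^T$, and then invokes minimality --- under the normalization $\tr(B\bar B^T)=1$, unequal eigenvalues force a hitting time strictly larger than $\pi\sqrt{k}/2$ --- to conclude that $B\bar B^T$ is a multiple of the identity. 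Your singular value decomposition of $B$ and direct minimization of $\sum_i(t\sigma_i)^2$ is only a cosmetic repackaging of this ($d_i=\sigma_i^2$, and length is proportional to time at fixed speed), and you correctly identify the same crux the paper relies on, namely that it is minimality, not bare reachability, which forces the singular values of $B$ to be equal.
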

\begin{proof}
Geodesics of the Grassmann manifold $Gr_{2k,k}$ are given by
\begin{equation}\label{eq:geodGr}
 \gamma_v(t)= \left[ \exp\left(t \begin{pmatrix} 0 & B \\ -\bar{B}^T & 0 \end{pmatrix}\right)\right]_{Gr_{2k,k}} =\begin{pmatrix}\gamma_v^1(t)&\gamma_v^2(t)
 \\
 \gamma_v^3(t) & \gamma_v^4(t)\end{pmatrix},
\end{equation}
where
\begin{eqnarray*}
\gamma_v^1(t)&=&\cos(t\sqrt{B\bar{B}^T}) \\
\gamma_v^3(t)&=&-\bar{B}^T\sin(t\sqrt{B\bar{B}^T})(\sqrt{B\bar{B}^T})^{-1}.
\end{eqnarray*}
We are looking for all geodesics for which there exists $T_0>0$, such that $\gamma_v^1(T_0)=0$ and $\gamma_v^3(T_0)=C$. As $C\in U(k)$ and particularly is invertible it follows from the form of $\gamma_v^3(T_0)$ that $B$ is invertible. 
Therefore, the matrix $B\bar{B}^T$ is positive definite and diagonalizable: $B\bar{B}^T=PDP^{-1}$, where $D=\diag(d_1, \dotso ,d_k)$ is a diagonal matrix with $d_i>0$ for $i \in \{1,\dotso,k\}$. This implies that 
$$\cos(t\sqrt{B\bar{B}^T})=P\cos(t\sqrt{D})P^{-1}$$ 
and so $\gamma_v^1(T_0)=\cos(T_0\sqrt{B\bar{B}^T})=0$ if and only if $\cos(T_0\sqrt{d_1})=\dotso = \cos(T_0\sqrt{d_k})=0$.

For the moment we assume that $B\in U(k)$. Then using the normalisation $\tr(B\bar{B}^T)=1$ we get $\sqrt{k}B \in U(k)$. Thus $B\bar{B}^T=\frac{1}{k}\Id_k=\diag(\frac{1}{k},\dotso,\frac{1}{k})$, and $T_0:=\min\{t>0 \vert \cos(t\sqrt{B\bar{B}^T})=0\}=\frac{\pi \sqrt{k}}{2}$. 

Now we want to show that no other minimizing geodesics exist except of those with the initial velocity defined by matrices from $U(k)$.  
Let $B$ be an arbitrary invertible matrix, not necessary from $U(k)$. If we again assume the normalization $\tr(B\bar{B}^T)=1$, then we get that there exist at least two eigenvalues $\frac{1}{\lambda_1}$ and $\frac{1}{\lambda_2}$ of $B\bar{B}^T$ with $0<\frac{1}{\lambda_1}<\frac{1}{k}<\frac{1}{\lambda_2}$. It follows that if $\cos(T_0\sqrt{B\bar{B}^T})=0$, then $\cos(\frac{T_0}{\sqrt{\lambda_1}})=0$. We conclude that $T_0\geq \frac{\pi \sqrt{\lambda_1}}{2}>\frac{\pi \sqrt{k}}{2}$ and a geodesic with initial velocity defined by the matrix $B$ and that reach the point $\left [\begin{pmatrix} 0 & D \\ C & 0 \end{pmatrix}\right]_{G_{2k,k}}$ at time $T_0$ is not minimizing. 
\end{proof}
\begin{coro}
Let $p=\left [\begin{pmatrix} 0 & D \\ C & 0 \end{pmatrix}\right]_{V_{2k,k}}\in V_{2k,k}$ with $C,D \in U(k)$ and $v=\begin{pmatrix} 0 & B \\ -\bar{B}^T & 0 \end{pmatrix}$ with $\sqrt{k}B \in U(k)$, $\tr(B\bar{B}^T)=1$. Then geodesics $\gamma_v(t)$ in $V_{2k,k}$ reaching the points $p$ at time $T_0=\frac{\pi \sqrt{k}}{2}$ are minimizing. Furthermore, if $B_1\not=B_2$, then $\gamma_{v_1}^3(T_0)\not=\gamma_{v_2}^3(T_0)$.  
\end{coro}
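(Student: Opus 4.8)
The plan is to separate the two assertions and attack them with entirely different tools: the minimality claim reduces to the Riemannian submersion structure of $\pi$ together with Lemma~\ref{grgeo}, while the injectivity claim is a short explicit computation of the matrix exponential.

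For the minimality, I would first note that since $v$ has vanishing vertical block ($A=0$), the group factor in~\eqref{sRgeodesic} is trivial, so $\gamma_v(t)=\pi_1(\exp_{U(2k)}(tv))$ is the horizontal lift of the Grassmann geodesic~\eqref{eq:geodGr} with the same datum $B$, and $\pi\circ\gamma_v$ is precisely that Grassmann geodesic. Because $\pi\colon V_{2k,k}\to Gr_{2k,k}$ is a Riemannian submersion (Definition~\ref{isometry}), $d\pi$ restricts to an isometry on $\mathcal H$, so every horizontal curve in $V_{2k,k}$ has the same length as its projection. The sandwich is then: on one side, Lemma~\ref{grgeo} shows that among the geodesics normalised by $\tr(B\bar B^T)=1$ (hence of equal speed) the ones with $\sqrt{k}B\in U(k)$ reach $\pi(p)$ at the minimal parameter $T_0=\tfrac{\pi\sqrt{k}}{2}$, so by completeness of the compact space $Gr_{2k,k}$ the geodesic $\pi\circ\gamma_v$ is minimizing and $\ell_{Gr}(\pi\circ\gamma_v)=d_{Gr}(\pi(\Id),\pi(p))$; on the other side, any horizontal curve in $V_{2k,k}$ from $\Id$ to $p$ projects to a curve from $\pi(\Id)$ to $\pi(p)$ of equal length, giving $d_V(\Id,p)\ge d_{Gr}(\pi(\Id),\pi(p))$. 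Combining these with $\ell_V(\gamma_v)=\ell_{Gr}(\pi\circ\gamma_v)$ forces $d_V(\Id,p)=\ell_V(\gamma_v)$, so $\gamma_v$ is minimizing in $V_{2k,k}$.

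For the injectivity, I would compute $\exp_{U(2k)}(tv)$ directly. Since $\sqrt{k}B\in U(k)$ with $\tr(B\bar B^T)=1$ forces $B\bar B^T=\bar B^TB=\tfrac1k\Id_k$, the velocity satisfies $v^2=-\tfrac1k\Id_{2k}$, whence
\[
\exp_{U(2k)}(tv)=\cos\!\Big(\tfrac{t}{\sqrt{k}}\Big)\Id_{2k}+\sqrt{k}\,\sin\!\Big(\tfrac{t}{\sqrt{k}}\Big)v.
\]
As $A=0$, the component $\gamma_v^3(t)$ is just the lower-left block of this exponential, namely $\gamma_v^3(t)=-\sqrt{k}\,\sin(t/\sqrt{k})\,\bar B^T$, and at $t=T_0=\tfrac{\pi\sqrt{k}}{2}$ this gives $\gamma_v^3(T_0)=-\sqrt{k}\,\bar B^T$. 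Since conjugate transposition is injective, $B_1\neq B_2$ yields $\gamma_{v_1}^3(T_0)=-\sqrt{k}\,\bar B_1^T\neq-\sqrt{k}\,\bar B_2^T=\gamma_{v_2}^3(T_0)$, which is the claim. The same formula also returns $\gamma_v^1(T_0)=0$, matching the zero upper-left block of $p$, and shows that $B$ is recovered uniquely from $C=\gamma_v^3(T_0)$.

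I expect the only genuinely delicate point to be the passage in the minimality argument from what Lemma~\ref{grgeo} literally delivers, namely that $\pi\circ\gamma_v$ reaches $\pi(p)$ at the smallest geodesic time, to the statement that $\pi\circ\gamma_v$ is a \emph{minimizing} geodesic in $Gr_{2k,k}$. This step uses Hopf–Rinow on the compact symmetric space $Gr_{2k,k}$ together with the fact that all competing geodesics are normalised to one speed, so that minimal parameter is equivalent to minimal length; once this is secured, the submersion sandwich and the exponential computation are entirely routine.
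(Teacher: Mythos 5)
Your proposal is correct and takes essentially the same route as the paper's own proof: minimality is transferred from the Grassmannian (Lemma~\ref{grgeo}) to $V_{2k,k}$ through the Riemannian-submersion/horizontal-lift structure, and injectivity follows from the explicit formula $\gamma_v^3(T_0)=-\sqrt{k}\,\bar{B}^T$. Your two cosmetic deviations---spelling out the length sandwich that the paper compresses into ``the horizontal lift is minimizing between fibers'', and computing $\exp(tv)$ from the identity $v^2=-\tfrac{1}{k}\Id_{2k}$ instead of quoting the block formulas~\eqref{eq:geodGr}---change nothing essential.
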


\begin{proof}
First we note that geodesics in $Gr_{2k,k}$ defined by $v$ satisfying the assumption of Lemma~\ref{grgeo} are minimizing geodesics from $\left [\begin{pmatrix} Id_k & 0 \\ 0 & Id_k \end{pmatrix}\right]_{G_{2k,k}}$ to $\left [\begin{pmatrix} 0 & D \\ C & 0 \end{pmatrix}\right]_{G_{2k,k}}$ by Lemma~\ref{grgeo}. The time of reaching the points $\left [\begin{pmatrix} 0 & D \\ C & 0 \end{pmatrix}\right]_{G_{2k,k}}$ is $T_0=\frac{\pi \sqrt{k}}{2}$. 
Furthermore, 
\begin{equation}\label{eq:g3}
\gamma_v^3(T_0)=-\bar{B}^T\diag\left(\sin(\frac{T_0}{\sqrt{k}}),\dotso,\sin(\frac{T_0}{\sqrt{k}})\right)\sqrt{k}=-\sqrt{k}\bar{B}^T \in U(k).
\end{equation}
The unique horizontal lift of~\eqref{eq:geodGr} is a minimizing between fibers passing through 
$[\Id]_{V_{2k,k}}$ and $p$ and moreover they are geodesics since they are horizontal lifts of geodesics.
Fix a point $p_0$ at the fiber passing through $[\Id]_{V_{2k,k}}$. Then the unique horizontal lift $\gamma_v(t)_{V_{2k,k}}=[\exp(tv)]_{V_{2k,k}}$ of~\eqref{eq:geodGr} starting from $p_0$ always reaches different points at the fiber $\pi^{-1}\left(\left [\begin{pmatrix} 0 & D \\ C & 0 \end{pmatrix}\right]_{G_{2k,k}}\right)$ at the time $T_0$ since $\gamma_v^3(T_0)$ depends on $\bar{B}^T$ but not on $B\bar{B}^T$ as shows~\eqref{eq:g3}. 
\end{proof}

Now we prove Proposition~\ref{2kk}.

\begin{proof}
Let assume that a point $p=\left[\begin{pmatrix} 0 & D \\ C & 0 \end{pmatrix} \right]_{V_{2k,k}}$ belongs to the cut locus from the identity point in $V_{2k,k}$.
Let $v^*=\begin{pmatrix} A & B \\ -\bar{B}^T & 0\end{pmatrix}$ with $A\not =0$ be an initial velocity of a minimizing normal horizontal geodesic 
$$
\gamma^{*}(t)=\left[\exp\left(t\begin{pmatrix} A & B \\ -\bar{B}^T & 0 \end{pmatrix} \right)\right]_{V_{2k,k}}\exp\left(-t\begin{pmatrix} A & 0 \\ 0 & 0 \end{pmatrix} \right)
$$ 
from $[Id]_{V_{2k,k}}$ to $p$ such that $\gamma^{*}(T_0)=p$. Then its projection $\tilde\gamma$ to $Gr_{2k,k}$ is a minimizing geodesic from $[Id]_{Gr_{2k,k}}$ to $\left [\begin{pmatrix} 0 & D \\ C & 0 \end{pmatrix}\right]_{G_{2k,k}}$. 
This implies that $\tilde\gamma$ have to coincide with a geodesic in $Gr_{2k,k}$ having form~\eqref{eq:geodGr} with some $B_1$ satisfying $\sqrt{k}B_1 \in U(k)$. It is also clear that $\gamma^{*}(t)$ is a horizontal lift of $\tilde\gamma$ starting at the point $[Id]_{V_{2k,k}}$. From the other side the horizontal lift of a geodesic having form~\eqref{eq:geodGr} is equal to $\left[\exp\left(t\begin{pmatrix} 0 & B_1 \\ -\bar{B}_1^T & 0 \end{pmatrix} \right)\right]_{V_{2k,k}}$ which is different from $\gamma^{*}(t)$. This is a contradiction to the fact that horizontal lift starting from the same point is unique. We conclude that the points of the form $\left[\begin{pmatrix} 0 & D \\ C & 0 \end{pmatrix} \right]_{V_{2k,k}}$ can not be in the cut locus.
\end{proof}

\begin{coro}
The set 
$$ \left\{\left[ \exp\left(t \begin{pmatrix} 0 & B \\ -\bar{B}^T & 0 \end{pmatrix}\right)\right]_{V_{2k,k}} \Big\vert \tr(B\bar{B}^T)=1 \,, \sqrt{k}B \in U(k) \,, t \in \left[0 \,, \frac{\pi \sqrt{k}}{2}\right] \right\} $$
is not in the normal cut locus of $V_{2k,k}$.
\end{coro}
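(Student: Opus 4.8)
The plan is to combine Proposition~\ref{2kk}, which already disposes of the endpoint, with the elementary fact that a point lying strictly before the first cut point of a minimizing geodesic admits a unique minimizer, where the required uniqueness is transported from the base $Gr_{2k,k}$ through the Riemannian submersion $\pi$. Throughout set $v=\begin{pmatrix} 0 & B \\ -\bar B^T & 0\end{pmatrix}$ with $\tr(B\bar B^T)=1$ and $\sqrt k\,B\in U(k)$, and $T_0=\frac{\pi\sqrt k}{2}$. For such horizontal $v$ one has $A(v)=0$, so the point in question is $\gamma_v(t)=\left[\exp(tv)\right]_{V_{2k,k}}$ and its projection $\pi\circ\gamma_v$ is precisely the Grassmannian geodesic~\eqref{eq:geodGr}.

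First I would clear the two endpoints. For $t=0$ the point is $\Id$, which is not in the cut locus. For $t=T_0$ the point equals $\left[\begin{pmatrix} 0 & D \\ C & 0\end{pmatrix}\right]_{V_{2k,k}}$ by~\eqref{eq:g3}, and Proposition~\ref{2kk} states exactly that these points lie outside the normal cut locus.

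The heart of the proof is the range $t\in(0,T_0)$, where I would argue downstairs and then lift. By Lemma~\ref{grgeo} the projection $\pi\circ\gamma_v$ is a minimizing geodesic of the compact, hence complete, manifold $Gr_{2k,k}$ on all of $[0,T_0]$, so its cut time is at least $T_0$; the standard Riemannian fact that a geodesic is the unique minimizer to each of its points strictly before the cut time then shows that $(\pi\circ\gamma_v)\vert_{[0,t]}$ is the unique minimizing geodesic from $[\Id]_{G_{2k,k}}$ to $\pi(\gamma_v(t))$ whenever $t<T_0$. Now let $\sigma$ be any minimizing normal geodesic in $V_{2k,k}$ from $\Id$ to $\gamma_v(t)$. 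Since $\pi$ is a Riemannian submersion and $\sigma$ is horizontal, $\pi\circ\sigma$ is a geodesic of the base of the same length as $\sigma$; moreover $\gamma_v\vert_{[0,t]}$ is a length-preserving horizontal lift of the minimizing curve $(\pi\circ\gamma_v)\vert_{[0,t]}$, which yields $d_{V_{2k,k}}(\Id,\gamma_v(t))=d_{Gr_{2k,k}}([\Id]_{G_{2k,k}},\pi(\gamma_v(t)))$ and hence that $\pi\circ\sigma$ is itself minimizing in the base. Uniqueness downstairs forces $\pi\circ\sigma=(\pi\circ\gamma_v)\vert_{[0,t]}$, and uniqueness of the horizontal lift through $\Id$ then forces $\sigma=\gamma_v\vert_{[0,t]}$. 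Thus $\gamma_v(t)$ is reached by a single minimizing normal geodesic and cannot lie in the normal cut locus.

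The step I expect to be the main obstacle is the sharp contrast between $t<T_0$ and $t=T_0$. Exactly at $t=T_0$ the base point $\pi(\gamma_v(T_0))$ is an antipodal point reached by a whole $U(k)$-family of Grassmannian minimizers, so the uniqueness-downstairs mechanism collapses and one genuinely needs Proposition~\ref{2kk}, whose proof recovers uniqueness upstairs from the observation~\eqref{eq:g3} that distinct admissible $B$ produce distinct lifts $\gamma_v^3(T_0)=-\sqrt k\,\bar B^T$. A secondary technical point that must be handled with care is the distance identity $d_{V_{2k,k}}=d_{Gr_{2k,k}}$ along these lifts: a priori the submersion only gives $d_{V_{2k,k}}\geq d_{Gr_{2k,k}}$, and equality has to be extracted from the minimality of $\pi\circ\gamma_v$ on $[0,T_0]$ established in Lemma~\ref{grgeo}, for otherwise one could not conclude that the projection $\pi\circ\sigma$ of an upstairs minimizer is a downstairs minimizer.
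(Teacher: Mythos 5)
Your proof is correct, and since the paper states this corollary \emph{without} proof (treating it as an immediate consequence of Lemma~\ref{grgeo}, the corollary following it, and Proposition~\ref{2kk}), what you have produced is the missing argument rather than an alternative to one; moreover you build it from exactly the paper's toolkit (projection to the Grassmannian, minimality downstairs, uniqueness of horizontal lifts), so the two fit together seamlessly. The genuinely valuable addition is your treatment of the interior times $0<t<T_0$: in the sub-Riemannian setting one cannot simply quote the Riemannian fact that points strictly before a cut point are reached by a unique minimizer (the corner-smoothing argument is delicate upstairs, since a priori a minimizer with a corner need not be excluded and velocity matching does not determine the covector), and your device of pushing the uniqueness question down to the Riemannian base --- where pre-cut-time uniqueness is classical and Lemma~\ref{grgeo} guarantees minimality of the projected geodesic on all of $[0,T_0]$ --- and then lifting it back via uniqueness of horizontal lifts is precisely the mechanism used in the paper's proof of Proposition~\ref{2kk} for the endpoint $t=T_0$. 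One small imprecision: your assertion that $\pi\circ\sigma$ is a geodesic of the base merely because $\pi$ is a Riemannian submersion and $\sigma$ is horizontal is not justified as stated, since $\sigma$ is a normal sub-Riemannian geodesic of $V_{2k,k}$, not a horizontal Riemannian geodesic of the total space; but this claim is never actually needed, because your distance identity $d_{V_{2k,k}}=d_{Gr_{2k,k}}$ already shows that $\pi\circ\sigma$ is a minimizing constant-speed curve, hence a minimizing geodesic, which is all that the downstairs uniqueness step requires.
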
 

\end{subsection}


\begin{section}{Stiefel and Grassmann manifold as embedded into $SO(n)$}\label{real}
In this section we assume that the Stiefel and Grassmann manifolds are embedded into $SO(n)$. We use similar notations for the Stiefel and the Grassmann manifolds as in the previous sections. 


\subsection{The group $SO(n)$, Stiefel and Grassmann manifolds} 


We recall that the special orthogonal group $SO(n)$ is the set of matrices
$$SO(n):=\{X \in \mathbb{R}^{n\times n} \vert\  X^TX=XX^T=I_n \text{ },\text{} \det(X)=1\}.$$
This is a compact Lie group with the Lie algebra $\mathfrak{so}(n)$ given by
$$  \mathfrak{so}(n) := \{ \mathcal{X} \in \mathbb{R}^{n \times n} \vert\  \mathcal X = -\mathcal X^T \}.$$
Every entry on the diagonal of $\mathcal X \in \mathfrak{so}(n)$ is zero and the real dimension of the manifold is $\frac{1}{2}n(n-1)$.

We define a bi-invariant Riemannian metric on $SO(n)$ by 
\begin{eqnarray*}
\langle \cdot \,, \cdot \rangle \colon q \mathfrak{so}(n) \times q \mathfrak{so}(n) &\to& \mathbb{R} \\
\langle q \mathcal{X} \,, q \mathcal{Y} \rangle &:=& -\tr (\mathcal{X}\mathcal{Y}),
\end{eqnarray*}
with $\mathcal{X},\mathcal{Y} \in \mathfrak{so}(n)$.

The Stiefel manifold $V_{n,k}$ for $k<n$ is the set of all $k$-tuples $(q_1, \dotso , q_k)$ of vectors $q_i \in \mathbb{R}^n$, $ i \in \{1, \dotso , k\}$, which are orthonormal with respect to the standard Euclidean metric. This compact manifold can be equivalently defined as 
$$
V_{n,k}:=\{X\in\mathbb{R}^{n\times k} \vert \ \ X^TX=I_{k}\}.
$$
Another way to define the Stiefel manifold $V_{n,k}$ is to introduce the equivalence relation $\backsim_1$ in $SO(n)$ by
\begin{eqnarray*}
q \backsim_1 p\quad\Longleftrightarrow\quad q=p \begin{pmatrix} I_k & 0 \\ 0 & S_{n-k} \end{pmatrix}, \qquad q,p \in SO(n), \quad  S_{n-k} \in SO(n-k),
\end{eqnarray*}
such that the equivalence class $[q]^{\backsim_1}$ of a point $q \in SO(n)$ is given by

\begin{eqnarray*}
[q]^{\backsim_1}=\left \{ q\begin{pmatrix} I_k & 0 \\ 0 & S_{n-k} \end{pmatrix} \Big  \vert S_{n-k} \in SO(n-k) \right \} \in SO(n)/SO_n(n-k).
\end{eqnarray*}
The Stiefel manifold $V_{n,k}$ can be identified with $SO(n)/SO_n(n-k)$. We use the notation $[q]_{V_{n,k}}$ for $[q]^{\backsim_1}$ in the present section.

The tangent space at a point $[q]_{V_{n,k}} \in V_{n,k}$ is given by
$$T_{[q]_{V_{n,k}}}V_{n,k}=\left\{ [q]_{V_{n,k}} \begin{pmatrix} \mathcal X_1 & -\mathcal X_2^T \\ \mathcal X_2 & 0 \end{pmatrix} 
\Big\vert \ \ \mathcal X_1 \in \mathfrak{so}(k), \mathcal X_2 \in \mathbb{R}^{(n-k) \times k} \right\} . $$
The induced metric
 $\langle \cdot\, , \cdot \rangle_{V_{n,k}}$ on $V_{n,k}$ is given by
\begin{eqnarray*}
& \left\langle  [q]_{V_{n,k}} 
\begin{pmatrix} \mathcal X_1 & -\mathcal X_2^T \\ \mathcal X_2 & 0 
\end{pmatrix} , 
 [q]_{V_{n,k}} 
 \begin{pmatrix} \mathcal Y_1 & -\mathcal Y_2^T \\ \mathcal Y_2 & 0 
 \end{pmatrix} 
 \right\rangle_{V_{n,k}} \Big( [q]_{V_{n,k}} \Big)
 \\
 &:=
 \left\langle q 
 \begin{pmatrix} \mathcal X_1 & -\mathcal X_2^T \\ \mathcal X_2 & 0 
 \end{pmatrix} , 
 q \begin{pmatrix} \mathcal Y_1 & -\mathcal Y_2^T \\ \mathcal Y_2 & 0 
 \end{pmatrix} 
 \right\rangle_{SO(n)} \big(q\big)
\\
&= - \tr \left ( \begin{pmatrix} \mathcal X_1 & -\mathcal X_2^T \\ \mathcal X_2 & 0 
\end{pmatrix}  
\begin{pmatrix} \mathcal Y_1 & -\mathcal Y_2^T \\ \mathcal Y_2 & 0 
\end{pmatrix} \right ) ,
\end{eqnarray*} 
where $q \in [q]_{V_{n,k}}$ is any representative of the equivalence class $[q]_{V_{n,k}}$. 

The Grassmann manifold $G_{n,k}$ is defined as a collection of all $k$-dimensional subspaces $\Lambda$ of $\mathbb{R}^n$. Equivalently, an element $\Lambda$ of $G_{n,k}$ can be written as an $(n \times k)$-matrix with columns $w_1, \dotso , w_k \in \mathbb{R}^n$, such that $\spn \{w_1, \dotso , w_k\}=\Lambda$, or, it can be defined as a quotient space in $SO(n)$ with respect to the following equivalence relation
\begin{eqnarray*}
m_1\backsim_2 m_2 \quad \Longleftrightarrow\quad m_1=m_2 \begin{pmatrix} S_k & 0 \\ 0 & S_{n-k} 
\end{pmatrix},\qquad m_1 , m_2 \in SO(n),
\end{eqnarray*}
where $S_k \in O(k)$, $S_{n-k} \in O(n-k)$, such that $\det(S_k)=\det(S_{n-k}) \in \{-1,1\}$. This leads to the equivalence classes
\begin{eqnarray*}
[m]^{\backsim_2}=\left\{m 
\begin{pmatrix} 
S_k & 0 
\\ 
0 & S_{n-k} 
\end{pmatrix} 
\Big\vert\ \  S_k \in O(k), S_{n-k} \in O(n-k) \text{ }, \text{} \det(S_k)=\det(S_{n-k}) \right\},  
\end{eqnarray*}
$m \in SO(n)$, which is isomorphic to $O(k) \times SO(n-k)\cong O_n(k) \times SO_n(n-k) $, as 
\begin{eqnarray*}
m\begin{pmatrix} 
S_k & 0 
\\ 
0 & S_{n-k} 
\end{pmatrix} \mapsto \left(S_k , S_{n-k} \begin{pmatrix} \det(S_k) & 0 \\ 0 & \Id_{n-1} \end{pmatrix} \right) \in O(k)\times SO(n-k) .
\end{eqnarray*}
We identify $G_{n,k}$ with the quotient space $SO(n)/(O_n(k)\times SO_n(n-k))$ and use the notation $[m]_{G_{n,k}}$ for $[m]^{\backsim_2}$ in the current section. 

The tangent space of $G_{n,k}$ at the point $[m]_{G_{n,k}}$ is given by
$$T_{[m]_{G_{n,k}}}G_{n,k}=\left\{[m]_{G_{n,k}} \begin{pmatrix} 0 & \mathcal X_2 \\ -\mathcal X_2^T & 0 \end{pmatrix} \Big\vert \ \ \mathcal X_2 \in \mathbb{R}^{k \times (n-k)} \right\} .$$ 
It has real dimension $k(n-k)$ that gives the real dimension of $G_{n,k}$. 

The induced metric $\langle \cdot\, , \cdot\rangle_{G_{n,k}}$ on $G_{n,k}$ is given by 
\begin{eqnarray*} 
& \left\langle
 [m]_{G_{n,k}} 
\begin{pmatrix} 0 & \mathcal X_2 \\ -\mathcal X_2^T & 0 
\end{pmatrix} , [m]_{G_{n,k}} 
\begin{pmatrix} 0 & \mathcal Y_2 \\ -\mathcal Y_2^T & 0 
\end{pmatrix} 
\right\rangle_{G_{n,k}} \Big([m]_{G_{n,k}} \Big)
\\
&:=  
\left\langle 
m 
\begin{pmatrix} 0 & \mathcal X_2 \\ -\mathcal X_2^T & 0 
\end{pmatrix} , m 
\begin{pmatrix} 0 & \mathcal Y_2 \\ -\mathcal Y_2^T & 0 
\end{pmatrix} 
\right\rangle_{SO(n)}\big(m\big) 
\\
&= 
-\tr \left( 
\begin{pmatrix} 0 & \mathcal X_2 \\ -\bar{\mathcal X_2}^T & 0 
\end{pmatrix} 
\begin{pmatrix} 0 & \mathcal Y_2 \\ -\bar{\mathcal Y_2}^T & 0 
\end{pmatrix} 
\right), 
\end{eqnarray*}
where $m \in SO(n)$ is any representative of $[m]_{G_{n,k}}$. 

A normal sub-Riemannian geodesic on $V_{n,k}$ starting from $ [q]_{V_{n,k}}$ is given by the formula similar to~\eqref{sRgeodesic} presented in Section~\ref{sec:submersion}.
\begin{eqnarray}\label{sRgeodesico}
\gamma (t) &=& \exp_{V_{n,k}}(tv) \exp_{O_n(k)}(-tA(v)) \nonumber
\\
&=&  \pi_1\left[q\exp_{SO(n)}\left(t 
\begin{pmatrix} \mathcal X_1 &  \mathcal X_2 \\ -\bar{ \mathcal X_2}^T & 0  
\end{pmatrix} \right)
\right] 
\exp_{O_n(k)}\left( -t \begin{pmatrix}  \mathcal X_1 & 0 \\ 0 & 0 \end{pmatrix} \right),
\end{eqnarray}
where $q \in SO(n)$, $v= [q]_{V_{n,k}} \begin{pmatrix}  \mathcal X_1 &  \mathcal X_2 \\ -\bar{ \mathcal X_2}^T & 0 \end{pmatrix} \in T_{[q]_{V_{n,k}} }V_{n,k}$ with $\begin{pmatrix}  \mathcal X_1 &  \mathcal X_2 \\ -\bar{ \mathcal X_2}^T
& 0 \end{pmatrix} \in \mathfrak{so}(n)$, $\pi_1\colon SO(n) \to SO(n)/SO_n(n-k)$ is the natural projection from $SO(n)$ to the quotient space, and $A\colon TV_{n,k} \to \mathfrak{so}_n(k)$ is the $\mathfrak{so}_n(k)$-valued connection one form.


\subsection{The cut locus of $V_{n,1}$, $n \in \mathbb{N}$} 

In this case $\dim(V_{n,1})=\dim(G_{n,1})=n-1$ and all sub-Riemannian geodesics are Riemannian ones. For the reason of completeness we present the cut locus in this case, because it is strongly related to the cut locus of $V_{n,1}$ embedded in $U(n)$. 

Two parts $\gamma_v^1(t), \gamma_v^3(t)$ of the geodesic $\gamma_v(t)=\left[ \begin{pmatrix} \gamma^1(t) & \gamma^2(t) \\ \gamma^3(t) & \gamma^4(t) \end{pmatrix}\right]_{V_{n,1}}$ for an initial velocity $v= \begin{pmatrix} 0 & B \\  -B^T & 0\end{pmatrix}$ are given by
\begin{eqnarray*}
\gamma_v^1(t)&=&\frac{1}{4\sqrt{BB^T}}e^{-it\sqrt{BB^T}}2\sqrt{BB^T}(e^{2it\sqrt{BB^T}}+1), \\
&=&\frac{1}{2}e^{-it\sqrt{BB^T}}(e^{2it\sqrt{BB^T}}+1)=\cos(t\sqrt{BB^T}) \\
\gamma_v^3(t) &=& -B^T\frac{1}{i2\sqrt{BB^T}}e^{-it\sqrt{BB^T}}(e^{2it\sqrt{BB^T}}-1) \\
&=& \frac{-B^T}{\sqrt{BB^T}}\sin(t\sqrt{BB^T}).
\end{eqnarray*}
These formulas are a particular case of formulas~\eqref{eq:g1} and~\eqref{eq:g2} for the choice of the initial velocity $v=\begin{pmatrix} 0 & B \\  -B^T & 0\end{pmatrix} \in \left\{\begin{pmatrix} xi & E \\ -\bar{E}^T & 0 \end{pmatrix} \Big\vert \ E \in \mathbb{C}^{1 \times (n-1)} , x \in \mathbb{R} \right\}$. 
Thus we can use arguments of Theorem~\ref{th:Vn1} and state that the cut locus of  the Stiefel manifold $V_{n,1}$ embedded in $SO(n)$ consists of exactly one point:
\begin{eqnarray*}
\left\{\left[\begin{pmatrix} C & 0 \\ 0 & D \end{pmatrix}\right]_{V_{n,k}} \Big\vert \ C \in O(1),\ D \in O(n-1):\begin{pmatrix} C & 0 \\ 0 & D \end{pmatrix} \in SO(n) \right\} \setminus  \left\{\left[Id\right]_{V_{n,k}}\right\} &=& \\
\left\{\left[\begin{pmatrix} \pm1 & 0 \\ 0 & D \end{pmatrix}\right]_{V_{n,k}} \Big\vert \  D \in O(n-1):\begin{pmatrix} \pm1 & 0 \\ 0 & D \end{pmatrix} \in SO(n) \right\} \setminus \left\{\left[Id\right]_{V_{n,k}}\right\}&=& \\ 
 \left\{\left[\begin{pmatrix} -1 & 0 \\ 0 & D \end{pmatrix}\right]_{V_{n,k}} \Big\vert \  D \in O(n-1):\begin{pmatrix} -1 & 0 \\ 0 & D \end{pmatrix} \in SO(n) \right\}.
\end{eqnarray*}


\subsection{The cut locus of $V_{3,2}$}


Since $V_{3,2} \cong SO(3)/SO(1)$ and $SO(1)$ is a normal subgroup of $SO(3)$, one can identify the sub-Riemannian structure of $V_{3,2}$ with the sub-Riemannian structure on the group $SO(3)$, that was studied in \cite{B}.
In particular all equivalences classes contain exactly one matrix
\begin{eqnarray*}
\left[\begin{pmatrix} a_{11} & a_{12} & a_{13} \\ a_{21} & a_{22} & a_{23} \\ a_{31} & a_{32} & a_{33} \end{pmatrix}\right]_{V_{n,k}} &=& 
\left\{\begin{pmatrix} a_{11} & a_{12} & a_{13} \\ a_{21} & a_{22} & a_{23} \\ a_{31} & a_{32} & a_{33} \end{pmatrix} \begin{pmatrix} I_2 & 0 \\ 0 & S_{1} \end{pmatrix} \Big\vert S_{1} \in S(1) \right \}  \\ &=&
\left \{\begin{pmatrix} a_{11} & a_{12} & a_{13} \\ a_{21} & a_{22} & a_{23} \\ a_{31} & a_{32} & a_{33} \end{pmatrix} \right \},
\end{eqnarray*}
such that we can identify $V_{3,2}$ with $SO(3)$. Furthermore, the induced horizontal and vertical space coincide with the horizontal and vertical space of the $k\oplus p$ problem on $SO(3)$ stated in \cite{B}. 


\begin{subsection}{About the cut locus for $V_{2k,k}$}


Analogous to Proposition~\eqref{2kk} we state here the following result.
\begin{prop}
All the points of the form $\left[\begin{pmatrix} 0 & D \\ C & 0 \end{pmatrix}\right]_{V_{2k,k}}$ with $C,D \in O(k)$ are not in the normal cut locus of $V_{2k,k}$.
\end{prop}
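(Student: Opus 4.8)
The plan is to transcribe the proof of Proposition~\ref{2kk} into the real setting, replacing $U(k)$ by $O(k)$, the Hermitian adjoint $\bar{B}^T$ by the transpose $B^T$, the algebra $\mathfrak{u}$ by $\mathfrak{so}$, and the normalising factor $-2n\tr$ by $-\tr$. Because the statement concerns only the \emph{normal} cut locus, abnormal minimizers play no role, and the whole argument rests on the explicit geodesic formulas for $V_{n,1}$ embedded in $SO(n)$ together with the geodesics of the real Grassmann $G_{2k,k}$.

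First I would prove the $SO(2k)$ analogue of Lemma~\ref{grgeo}. Starting from $\left[\begin{pmatrix} Id_k & 0 \\ 0 & Id_k \end{pmatrix}\right]_{G_{2k,k}}$, the Grassmann geodesic with initial velocity $v=\begin{pmatrix} 0 & B \\ -B^T & 0 \end{pmatrix}$ has blocks $\gamma_v^1(t)=\cos(t\sqrt{BB^T})$ and $\gamma_v^3(t)=-B^T\sin(t\sqrt{BB^T})(\sqrt{BB^T})^{-1}$, which are the real analogues of~\eqref{eq:geodGr}. Demanding $\gamma_v^1(T_0)=0$ and $\gamma_v^3(T_0)=C\in O(k)$ forces $B$ invertible, hence $BB^T$ positive definite and diagonalisable as $BB^T=PDP^{-1}$ with $D=\diag(d_1,\dots,d_k)$, $d_i>0$; then $\cos(T_0\sqrt{BB^T})=0$ means $\cos(T_0\sqrt{d_i})=0$ for all $i$. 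Under the normalisation $\tr(BB^T)=1$ the minimal such $T_0$ is realised exactly when every $d_i=\tfrac{1}{k}$, that is when $\sqrt{k}B\in O(k)$, giving $T_0=\tfrac{\pi\sqrt{k}}{2}$; any other invertible $B$ has an eigenvalue of $BB^T$ strictly below $\tfrac{1}{k}$, which forces a strictly larger $T_0$ and breaks minimality. This is Lemma~\ref{grgeo} with $O(k)$ in place of $U(k)$.

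Next I would record the corresponding corollary. For $\sqrt{k}B\in O(k)$ the unique horizontal lift of such a base geodesic reaches the fibre over $\left[\begin{pmatrix} 0 & D \\ C & 0 \end{pmatrix}\right]_{G_{2k,k}}$ at $T_0=\tfrac{\pi\sqrt{k}}{2}$, with first $k$ columns $\begin{pmatrix} 0 \\ \gamma_v^3(T_0) \end{pmatrix}$ and $\gamma_v^3(T_0)=-\sqrt{k}B^T\in O(k)$. As $\sqrt{k}B$ ranges over $O(k)$ this value sweeps the whole fibre, so for the prescribed $p$ there is exactly one such $B$, namely $\sqrt{k}B=-C^T$. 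Since $\pi$ is a Riemannian submersion and the lift is horizontal, it has the same length as the base geodesic, whence $d_{V_{2k,k}}([Id]_{V_{2k,k}},p)=d_{G_{2k,k}}([Id]_{G_{2k,k}},\pi(p))$ and the lift $\ell$ to $p$ is minimizing; distinct $B$ give distinct endpoints, so $\ell$ is the only pure-horizontal minimizer reaching $p$.

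Finally comes the main step. Suppose $p=\left[\begin{pmatrix} 0 & D \\ C & 0 \end{pmatrix}\right]_{V_{2k,k}}$ lay in the normal cut locus, and let $\gamma^*$ be a minimizing normal geodesic from $[Id]_{V_{2k,k}}$ to $p$ with initial velocity $\begin{pmatrix} A & B \\ -B^T & 0 \end{pmatrix}$, $A\neq 0$. Its length equals $d_{V_{2k,k}}([Id]_{V_{2k,k}},p)=d_{G_{2k,k}}([Id]_{G_{2k,k}},\pi(p))$, and since $\gamma^*$ is horizontal its projection has the same length, hence is a minimizing Grassmann geodesic from $[Id]_{G_{2k,k}}$ to $\left[\begin{pmatrix} 0 & D \\ C & 0 \end{pmatrix}\right]_{G_{2k,k}}$; by the lemma it is pure-horizontal with some $\sqrt{k}B_1\in O(k)$. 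Thus $\gamma^*$ is a horizontal lift of this base geodesic issuing from $[Id]_{V_{2k,k}}$, so by uniqueness of horizontal lifts $\gamma^*=\ell=\left[\exp\!\left(t\begin{pmatrix} 0 & B_1 \\ -B_1^T & 0 \end{pmatrix}\right)\right]_{V_{2k,k}}$, whose defining velocity has vanishing diagonal block --- contradicting $A\neq 0$. Hence every minimizing normal geodesic to $p$ has $A=0$, and by the corollary exactly one exists, so $p$ is not in the normal cut locus. I expect the substantive step to be the lemma together with the fibre-sweeping claim that yields $d_{V_{2k,k}}=d_{G_{2k,k}}$; the real bookkeeping to watch is that the determinant constraint in the definition of $G_{2k,k}$ and the $O(k)$-valued vertical data do not enlarge the family of minimizing base geodesics, but once the unitary proof is in hand this is routine.
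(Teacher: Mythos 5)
Your proposal is correct and takes essentially the same route as the paper: the paper's own proof of this proposition simply observes that the argument for Proposition~\ref{2kk} uses only orthogonality, not anything specific to the unitary group, and repeats it verbatim in the real setting. Your transcription (the $O(k)$ analogue of Lemma~\ref{grgeo}, the fibre-sweeping corollary with $\gamma_v^3(T_0)=-\sqrt{k}B^T$, and the horizontal-lift-uniqueness contradiction ruling out $A\neq 0$) is exactly that repetition, with the uniqueness bookkeeping made explicit.
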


\begin{proof}
The proof of Proposition~\ref{2kk} does not use the specific of the unitary group, rather the orthogonality property. Therefore, we can literally repeat the proof of Proposition~\ref{2kk} here.
\end{proof}
\end{subsection}

\begin{coro}
The set 
$$ \left\{\left[ \exp\left(t \begin{pmatrix} 0 & B \\ -\bar{B}^T & 0 \end{pmatrix}\right)\right]_{V_{2k,k}} \Big\vert \tr(B\bar{B}^T)=1 \,, \sqrt{k}B \in O(k) \,, t \in \left[0 \,, \frac{\pi \sqrt{k}}{2}\right] \right\} $$
is not in the normal cut locus of $V_{2k,k}$.
\end{coro}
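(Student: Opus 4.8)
The plan is to deduce the statement from two facts that are already available in the $SO(n)$ setting: the $SO(n)$-counterpart of the corollary to Lemma~\ref{grgeo}, which says that each geodesic in the family is minimizing up to time $T_0=\frac{\pi\sqrt k}{2}$ and lands on an anti-block-diagonal point, and the $SO(n)$-counterpart of Proposition~\ref{2kk}, which says that such endpoints are not in the normal cut locus. The only genuinely new ingredient is a propagation argument: if the endpoint of a minimizing geodesic is not a cut point, then no earlier point of the geodesic is a cut point either.

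First I would fix $B$ with $\sqrt k\,B\in O(k)$ and $\tr(B\bar B^T)=1$, set $v=\begin{pmatrix}0&B\\-\bar B^T&0\end{pmatrix}$ and $T_0=\frac{\pi\sqrt k}{2}$. Repeating the computation of Lemma~\ref{grgeo} over $\mathbb R$, the projection of $\gamma_v$ to $Gr_{2k,k}$ is a minimizing geodesic on $[0,T_0]$ reaching $\left[\begin{pmatrix}0&D\\C&0\end{pmatrix}\right]_{G_{2k,k}}$; since $\pi$ is a Riemannian submersion, the horizontal lift $\gamma_v$ is itself minimizing on $[0,T_0]$ and reaches a point $p=\left[\begin{pmatrix}0&D\\C&0\end{pmatrix}\right]_{V_{2k,k}}$ with $\bar B^T=-\tfrac{1}{\sqrt k}C$. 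By the $SO(n)$-counterpart of Proposition~\ref{2kk}, $p$ is not in the normal cut locus; moreover its proof shows that $p$ is reached by a \emph{unique} minimizing normal geodesic, namely $\gamma_v$ itself (any minimizer must have $A=0$, and then the lower-left block $C$ of the endpoint determines $B$).

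Next I would propagate this backward along $\gamma_v$. Every subarc $\gamma_v|_{[0,t]}$ with $t\in[0,T_0]$ is again minimizing. Suppose some $\gamma_v(t)$ with $t<T_0$ lay in the normal cut locus; then a second minimizing normal geodesic $\sigma\neq\gamma_v|_{[0,t]}$ would reach $\gamma_v(t)$ at time $t$. Concatenating $\sigma$ with $\gamma_v|_{[t,T_0]}$ produces a horizontal path from $[\Id]_{V_{2k,k}}$ to $p$ of the same length as $\gamma_v$, hence also minimizing. A length-minimizing horizontal curve for a bracket-generating distribution cannot have a corner, so $\sigma$ must arrive at $\gamma_v(t)$ tangent to $\gamma_v$; the concatenation is then a minimizing normal geodesic to $p$, which by the uniqueness above equals $\gamma_v$, forcing $\sigma=\gamma_v|_{[0,t]}$, a contradiction. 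Hence no point of $\gamma_v([0,T_0])$ is a cut point, which is exactly the asserted statement once $B$ and $t$ range over the stated sets.

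The main obstacle I anticipate is the last paragraph: making the corner-cutting step rigorous, i.e. arguing that a minimizing concatenation with distinct one-sided velocities can be strictly shortened (the standard $C^1$-regularity of sub-Riemannian minimizers for bracket-generating distributions), and then combining it cleanly with the uniqueness of the minimizer to $p$. Everything else reduces to already-established computations, once it is observed that the passage from $U(k)$ to $O(k)$ affects none of the arguments, exactly as in the $SO(n)$-version of Proposition~\ref{2kk}.
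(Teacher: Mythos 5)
Your route is, in outline, the one the paper intends: the paper gives no proof of this corollary at all, presenting it as an immediate consequence of the corollary following Lemma~\ref{grgeo} and of the $SO(n)$ analogue of Proposition~\ref{2kk} (whose proof, as the paper notes, carries over verbatim from the unitary case). So the genuinely new content of your write-up is the backward-propagation step, and that is precisely where there is a gap. You justify it by the claim that a length-minimizing horizontal curve of a bracket-generating distribution cannot have a corner, calling this ``standard $C^1$-regularity of sub-Riemannian minimizers''. It is not standard: regularity of sub-Riemannian length minimizers is a notorious open problem, and the no-corner theorem (Hakavuori--Le Donne) appeared only years after this paper, so it cannot be invoked as a known fact. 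Worse, even granting it, your next inference fails as stated: from the tangency $\dot\sigma(t)=\dot\gamma_v(t)$ you conclude that the concatenation ``is then a minimizing normal geodesic''. But a normal geodesic is determined by its initial \emph{covector}, not its initial velocity; in this very paper the normal geodesics through $\Id$ with the same horizontal velocity form a whole family parametrized by the vertical block $A$. Matching one-sided velocities at the junction therefore does not make the concatenation normal, and the endpoint-uniqueness statement you want to apply concerns normal geodesics only.

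Both defects can be repaired with tools already in, or cited by, the paper. First repair: the distribution $\mathcal H$ is bracket generating of step $2$ (the proposition in Section~\ref{sec:general}; the same computation works over $\mathbb R$). By the Goh condition of Agrachev--Sarychev~\cite{AS}, a strictly abnormal minimizer carries a nonzero covector annihilating $\mathcal H+[\mathcal H,\mathcal H]$, which is impossible in step $2$; hence \emph{every} length minimizer on $V_{2k,k}$ is a normal geodesic. Your concatenation is minimizing, hence normal, hence by the uniqueness of the minimizing normal geodesic from $\Id$ to $p$ it equals $\gamma_v$, forcing $\sigma=\gamma_v|_{[0,t]}$; no corner analysis is needed. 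Second repair, closest to the paper's own technique in the proof of Proposition~\ref{2kk}: avoid concatenation entirely. Any minimizing normal geodesic $\sigma$ from $\Id$ to $\gamma_v(t)$ is horizontal, so $\pi\circ\sigma$ has length $t$, which equals the Riemannian distance in $Gr_{2k,k}$ from $[\Id]_{G_{2k,k}}$ to $\pi\bigl(\gamma_v(t)\bigr)$; thus $\pi\circ\sigma$ is a minimizing \emph{Riemannian} geodesic ending at an interior point of the minimizing geodesic $\pi\circ\gamma_v|_{[0,T_0]}$. By standard Riemannian geometry such a minimizer is unique, so $\pi\circ\sigma=\pi\circ\gamma_v|_{[0,t]}$, and uniqueness of horizontal lifts then gives $\sigma=\gamma_v|_{[0,t]}$. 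Either repair closes the gap; with one of them inserted, the rest of your argument (the two $SO(n)$ facts and the identification $C=-\sqrt k\,\bar B^T$ of endpoints) is correct.
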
 

\end{section}


\begin{thebibliography}{99}

\bibitem{AgrSach}
{\sc Agrachev, A. A.; Sachkov, Y. L.}, {\it Control theory from the geometric viewpoint}. Encyclopaedia of Mathematical Sciences, 87. Control Theory and Optimization, II. Springer-Verlag, Berlin, 2004. pp. 412.

\bibitem{AS} {\sc Agrachev, A. A.; Sarychev, A. V.}, {\it Abnormal sub-{R}iemannian geodesics: {M}orse index and rigidity}. Ann. Inst. H. Poincar\'e Anal. Non Lin\'eaire. {\bf 13} (1996), 635-690.

\bibitem{AS1}
{\sc Agrachev, A. A.; Sarychev, A. V.}, {\it Sub-Riemannian metrics: minimality of abnormal geodesics versus subanalyticity}. ESAIM Control Optim. Calc. Var. {\bf 4} (1999), 377-403. 

\bibitem{cut} {\sc Agrachev, A.; Bonnard, B.; Chyba, M.; Kupka, I.}, {\it Sub-{R}iemannian sphere in {M}artinet flat case}. ESAIM Control, Optim. Calc. Var. {\bf 2} (1997), 377-448.

\bibitem{AG} {\sc Agrachev, A. A.; Gauthier, J. P.}, {\it Sub-Riemannian metrics and isoperimetric problems in the contact case.} (Russian) Geometric control theory (Russian) (Moscow, 1998), 5-48, Itogi Nauki Tekh. Ser. Sovrem. Mat. Prilozh. Temat. Obz., 64, Vseross. Inst. Nauchn. i Tekhn. Inform. (VINITI), Moscow, 1999.

\bibitem{A} {\sc Edelman, A.; Arias, T. A.; Smith, S. T.}, {\it The geometry of algorithms with orthogonality constraints}. SIAM J. Matrix Anal. Appl. {\bf 20} (1999), no.~2., 303-353.

\bibitem{BC} {\sc Bonnard, B.; Chyba, M.}, {\it M\'ethodes g\'eom\'etriques et analytiques pour \'etudier l'application exponentielle, la sph\`ere et le front d'onde en g\'eom\'etrie sous-riemannienne dans le cas {M}artinet}. ESAIM. Control Optim. Calc. Var. {\bf 4} (1999), 245-334.

\bibitem{BCK} {\sc Bonnard, B.; Chyba, M.; Kupka, I.}, {\it Nonintegrable geodesics in {SR}-{M}artinet geometry}. Differential geometry and control (Boulder, CO, 1997), 119-134, Proc. Sympos. Pure Math., 64, Amer. Math. Soc., Providence, RI, 1999. 

\bibitem{BT} {\sc Bonnard, B.; Tr{\'e}lat, E.}, {\it On the role of abnormal minimizers in sub-{R}iemannian geometry}. Ann. Fac. Sci. Toulouse Math. (6) {\bf 10} (2001), no.~3, 405-491.

\bibitem{B}
{\sc Boscain, U.; Rossi, F.}, {\it Invariant {C}arnot-{C}arath\'eodory metrics on {$S^3,\ {\rm SO}(3),\ {\rm SL}(2)$}, and lens spaces}. SIAM J. Control Optim. {\bf 47} (2008), no.~4, 1851-1878. 

\bibitem{CDPT}
{\sc Capogna, L.; Danielli, D.; Pauls, S. D.; Tyson, J.T.}, {\it An introduction to the Heisenberg group and the sub-Riemannian isoperimetric problem}. Progress in Mathematics, 259. Birkh\"auser Verlag, Basel, 2007. pp. 223.

\bibitem{C} {\sc Chow, W.-L.}, {\it \"Uber Systeme von linearen partiellen Differentialgleichungen erster Ordnung}. Math. Ann. {\bf 117} (1939), 98-105.

\bibitem{CM98} {\sc Chyba, M.}, {\it Le front d'onde en g\'eom\'etrie sous-riemannienne: le cas Martinet}. S\'eminaire de Th\'eorie Spectrale et G\'eom\'etrie, {\bf 16} (1997-1998),  pp. 81-105. Univ. Grenoble I, Saint.

\bibitem{CM97} {\sc Chyba, M.}, {\it La cas Martinet en G\'eom\'etrie Sous-Riemannienne}. Ph.D. thesis, Univ. of Geneva.

\bibitem{G} {\sc Gallier, J.}, {\it Notes on Differential Geometry and Lie Groups}. Unpublished manuscript.  Retrieved from http://www.cis.upenn.edu/~cis610/diffgeom-n.pdf

\bibitem{Go} {\sc Godoy Molina, M.; Markina, I.}, {\it Sub-Riemannian geodesics and heat operator on odd dimensional spheres}. Anal. Math. Phys. {\bf 2} (2012), no.~2, 123-147.

\bibitem{GR} {\sc Grochowski, M.}, {\it Normal forms and reachable sets for analytic {M}artinet sub-{L}orentzian structures of {H}amiltonian type}. J. Dyn. Control Syst. {\bf 17} (2011), no.~1, 49-75.

\bibitem{HY} {\sc Huang, T.; Yang, X.}, {\it Extremals in some classes of {C}arnot groups}. Sci. China Math. {\bf 55} (2012), no.~3, 633-646.

\bibitem{Knapp}
{\sc Knapp, A. W.}, {\it Lie groups beyond an introduction}. Second edition. Progress in Mathematics, 140. Birkh\"auser Boston, Inc., Boston, MA, 2002. pp. 812.

\bibitem{LS} {\sc Liu, W.; Sussmann, H. J.}, {\it Shortest paths for sub-Riemannian metrics on rank-two distributions}. Mem. Amer. Math. Soc. {\bf 118} (1995), pp. 104.

\bibitem{Ma} {\sc Manton, J. H.},  {\it Optimization algorithms exploiting unitary constraints}. IEEE Trans. on Signal Process. {\bf 50} (2002), no.~3, 635-650.

\bibitem{Mi} {\sc Milnor, J.}, {\it Curvatures of left invariant metrics on {L}ie groups}, Advances in Math. {\bf 21} (1976), no.~3, 293-329.

\bibitem{M94} {\sc Montgomery, R.}, {\it Abnormal minimizers}. SIAM J. Control Optim. {\bf 32} (1994), no.~6, 1605-1620.

\bibitem{M} 
{\sc Montgomery, R.}, {\it A tour of subriemannian geometries, their geodesics and applications}. Mathematical Surveys and Monographs, 91. American Mathematical Society, Providence, RI, 2002.

\bibitem{NaSa}
{\sc Naitoh, H.; Sakane, Y.}, {\it On conjugate points of a nilpotent Lie group}. Tsukuba J. Math. {\bf 5} (1981), no.~1, 143-152.

\bibitem{R} {\sc Rashevski{\u\i}, P. K.}, {\it About connecting two points of complete nonholonomic space by admissible curve}, Uch. Zapiski Ped. Inst. K.~Liebknecht {\bf 2} (1938), 83-94.

\bibitem{S} {\sc Sachkov, Y.}, {\it Cut locus and optimal synthesis in the sub-{R}iemannian problem on the group of motions of a plane}. ESAIM. Control Optim. Calc. Var. {\bf 17} (2011), no.~2, 293-321.

\bibitem{Str}
{\sc Strichartz, R. S.}, {\it Sub-Riemannian geometry}. J. Differential Geom. {\bf 24} (1986), no. 2, no.~2, 221-263.

\end{thebibliography}
\end{document}